\newenvironment{subtheorem}[1]{%
  \def\subtheoremcounter{#1}%
  \refstepcounter{#1}%
  \protected@edef\theparentnumber{\csname the#1\endcsname}%
  \setcounter{parentnumber}{\value{#1}}%
  \setcounter{#1}{0}%
  \expandafter\def\csname the#1\endcsname{\theparentnumber\alph{#1}}%
  \expandafter\def\csname theH#1\endcsname{thm.\theparentnumber\alph{#1}}%
  \unskip\ignorespaces
}{%
  \setcounter{\subtheoremcounter}{\value{parentnumber}}%
  \ignorespacesafterend
}
\newcounter{parentnumber}
\newtheorem{theorem}{Theorem}
\newtheorem{lemma}{Lemma}
\newtheorem{definition}{Definition}
\newtheorem{corollary}{Corollary}
\newtheorem{remark}{Remark}
\title{\LARGE \bf
Phase limitations of multipliers at harmonics
}
\author{William P. Heath,~\IEEEmembership{Member,~IEEE,} Joaquin Carrasco,~\IEEEmembership{Member,~IEEE,} and Jingfan Zhang,~\IEEEmembership{Student Member,~IEEE}
\thanks{WIlliam P. Heath, Joaquin Carrasco and Jingfan Zhang are all with the Control Systems Centre, Department of Electrical and Electronic Engineering, University of Manchester, UK
        {\tt\small william.heath@manchester.ac.uk; joaquin.carrascogomez@manchester.ac.uk, jingfan.zhang@manchester.ac.uk}}%
}
\begin{document}

\maketitle
\thispagestyle{empty}
\pagestyle{empty}

\begin{abstract}
We present a phase condition under which there is no suitable multiplier for a given continuous-time plant. The condition can be derived from either the duality approach or from the frequency interval approach. The condition has a simple graphical interpretation, can be tested in a numerically efficient manner and may be applied systematically. Numerical examples show significant improvement over existing results in the literature. The condition is used to demonstrate a third order system with delay that is a counterexample to the Kalman Conjecture.
\end{abstract}







\section{Introduction}

The continuous-time OZF (O'Shea-Zames-Falb) multipliers were  discovered by O'Shea~\cite{OShea67} and formalised by Zames and Falb \cite{Zames68}. They preserve the positivity of  monotone memoryless nonlinearities. Hence they can be used, via loop transformation, to establish the absolute stability of Lurye systems with slope-restricted memoryless nonlinearities. An overview is given in \cite{Carrasco:EJC}.

Recent interest is largely driven by their compatability with the integral quadratic constraint (IQC) framework of Megretski and Rantzer \cite{Megretski97} and the availability of computational searches \cite{Safonov:87,  Gapski:94,Chang:12,Chen:95,Chen:96,Turner2009,Carrasco12,Turner:12,Carrasco:14}. A modification of the search proposed in \cite{Chen:95} is used in the Matlab IQC toolbox \cite{Kao:04} and analysed by Veenman and Scherer \cite{Veenman14}.

No single search method outperforms the others, and often a hand-tailored search outperforms an automated search \cite{Carrasco:14}. This motivates the analysis of conditions where a multiplier cannot exist. There are two main approaches in the literature. 
J\"{o}nsson and Laiou \cite{Jonsson:96} give a condition that must be satisfied at a number of isolated frequencies.  Their result is a particular case of a more general analsysis based on duality in an optimization framework \cite{Jonsson96thesis,Jonsson97,Jonsson99};  we will refer to this as the ``duality approach.'' Their result requires a non-trivial search over a finite number of parameters.
By contrast Megretski \cite{Megretski:95} gives a threshold such that the phase of a multiplier cannot be simultaneously above the threshold over a certain frequency interval and below its negative value on another. The idea is generalised in \cite{Wang:18}, where in particular the threshold for the second interval is allowed to have a different value. We will refer to this as the ``frequency interval approach.''
Both  the duality approach and the frequency interval  approach lead to powerful and useful results, but neither allows a systematic approach. With respect to the duality approach 
J\"{o}nsson states \cite{Jonsson96thesis} “it is in most applications hard to find a suitable frequency grid for the application of the results.''  With respect to the interval approach, in  \cite{Wang:18} we conclude that the most insightful choice of interval remains open. 

In this paper we present a simple phase condition on two frequencies whose ratio is rational. The condition can be be tested systematically. At each frequency ratio the condition leads to a graphical criterion similar to the off-axis circle criterion \cite{Cho:68} in that it can be expressed as a bound on the phase of a transfer function. We derive the condition via the duality approach, but we also show that it is equivalent to a limiting case of the frequency interval approach.
We illustrate the criterion on three examples: we show it gives a significantly better results for the numerical example in \cite{Jonsson:96}; we show it gives new bounds for the gain with O'Shea's classical example \cite{OShea67,Carrasco:EJC}; we provide an example of a third order transfer function with delay that does not satisfy the Kalman Conjecture.


The structure of this paper as follows. Section~\ref{Prem}  provides the necessary background material and includes the following minor contribution:  Theorems`\ref{Jthm_a} and~\ref{Jthm_b} provide frequency conditions  similar in spirit to the duality approach of \cite{Jonsson:96}, but more widely applicable; specifically the  conditions allow both the system transfer function and the multiplier to be irrational. 
The main results of the paper are presented in Section~\ref{Main}. Theorems~\ref{thm:2a} and~\ref{thm:2b} give a phase condition that has a simple graphical interpretation and can be implemented systematically. 
We prove Theorems~\ref{thm:2a} and~\ref{thm:2b} via the duality approach. We discuss both the graphical interpretation and the numerical implementation of Theorems~\ref{thm:2a} and~\ref{thm:2b}.  In Section~\ref{Int} we show that the results can also be derived via the frequency interval approach: Corollaries~\ref{m_corollary_a} and~\ref{m_corollary_b} provide a version of the interval approach \cite{Wang:18} for the limiting case where the length of interval goes to zero; Theorems~\ref{Meg_equiv_a} and~\ref{Meg_equiv_b}  state these corollaries are respectively equivalent to Theorems~\ref{thm:2a} and~\ref{thm:2b}.
Section~\ref{Exa} includes three examples: the first shows we achieve improved results over those reported in \cite{Jonsson:96}; the second is the benchmark problem of O'Shea\cite{OShea67} where we obtain improved results over those reported in \cite{Wang:18}; finally, in the third, we show that  a third order with delay system provides a counterexample to the Kalman Conjecture. All proofs, where not immediate, are given in the Appendix.

\section{Preliminaries}\label{Prem}

\subsection{Multiplier theory}

We are concerned with the input-output stability of the Lurye system given by
\begin{equation}
y_1=Gu_1,\mbox{ } y_2=\phi u_2,\mbox{ } u_1=r_1-y_2 \mbox{ and }u_2 = y_1+r_2.\label{eq:Lurye}
\end{equation}
 Let $\mathcal{L}_2$ be the space of finite energy Lebesgue integrable signals and let $\mathcal{L}_{2e}$ be the corresponding extended space (see for example  \cite{desoer75}). The Lurye system is said to be stable if $r_1,r_2\in\mathcal{L}_2$  implies $u_1,u_2,y_1,y_2\in\mathcal{L}_2$.

The  Lurye system~(\ref{eq:Lurye}) is assumed to be well-posed with $G:\mathcal{L}_{2e}\rightarrow\mathcal{L}_{2e}$  linear time invariant (LTI) causal and stable, and with $\phi:\mathcal{L}_{2e}\rightarrow\mathcal{L}_{2e}$ memoryless and time-invariant.  With some abuse of notation we will use $G(s)$ to denote the transfer function corresponding to $G$. The nonlinearity $\phi$ is assumed to be montone in the sense that
$(\phi u) (t_1)\geq (\phi u)(t_2)$ for all $u(t_1)\geq  u(t_2)$. It is also assumed to be bounded in the sense that there exists a $C\geq 0$ such that $|(\phi u)(t)|\leq C|u(t)|$ for all $u(t)\in\mathbb{R}$. We say  $\phi$ is slope-restricted on $[0,k]$ if $0\leq (\phi u)(t_1) -(\phi u) (t_2))/(u(t_1)-u(t_2))\leq k$ for all $u(t_1)\neq u(t_2)$. We say $\phi$ is odd if $(\phi u)(t_1)=-(\phi u)(t_2)$ whenever $u(t_1)=-u(t_2)$.

\begin{definition}\label{def1}
Let $M:\mathcal{L}_{2}\rightarrow\mathcal{L}_{2}$  be LTI.
We say $M$ is a suitable multiplier for $G$ if there exists $\varepsilon>0$ such that
\begin{align}
\mbox{Re}\left \{
				M(j\omega) G(j\omega)
			\right \} > \varepsilon\mbox{ for all } \omega \in \mathbb{R}.
\end{align}
\end{definition}
\begin{remark}\label{rem_phase}
Suppose $M$ is a suitable multiplier for $G$ and $\angle G(j\omega) \leq -\pi/2 -\theta$ for some $\omega$ and $\theta$. Then $\angle M(j\omega) > \theta$. Similarly if $\angle G(j\omega) \geq \pi/2 +\theta$ then $\angle M(j\omega) < -\theta$.
\end{remark}

\begin{subtheorem}{definition}
\begin{definition}\label{def2a}
Let $\mathcal{M}$ be the class of LTI $M:\mathcal{L}_{2}\rightarrow\mathcal{L}_{2}$  whose implulse response is given by
\begin{equation}\label{def_m}
m(t) = m_0 \delta(t)-h(t)-\sum_{i=1}^{\infty}h_i \delta(t-t_i),
\end{equation}
with
\begin{equation}
\begin{split}
 h(t) & \geq 0 \mbox{ for all } t\mbox{, }h_i\geq 0 \mbox{ for all } i\\
& 
\mbox{and }
\| h\|_1+\sum_{i=1}^{\infty} h_i \leq m_0.
\end{split}
\end{equation}
\end{definition}
\begin{definition}\label{def2b}
Let $\mathcal{M}_{\mbox{odd}}$ be the class of LTI $M:\mathcal{L}_{2}\rightarrow\mathcal{L}_{2}$  whose implulse response is given by (\ref{def_m})
with
\begin{equation}
\| h\|_1+\sum_{i=1}^{\infty} |h_i| \leq m_0.
\end{equation}
\end{definition}
\end{subtheorem}
\begin{remark}
$\mathcal{M}\subset\mathcal{M}_{\mbox{odd}}$.
\end{remark}

The Lurye system (\ref{eq:Lurye}) is said to be absolutely stable for a particular $G$ if it is stable for all $\phi$ in some class $\Phi$. In particular, if there is a suitable $M\in\mathcal{M}$ for $G$ then it is absolutely stable for the class of memoryless time-invariant monotone bounded nonlinearities;  if there is a suitable $M\in\mathcal{M}_{\mbox{odd}}$ for $G$ then it is absolutely stable for the class of memoryless time-invariant odd monotone bounded nonlinearities. Furthermore, if there is a suitable $M\in\mathcal{M}$ for $1/k+G$ then it is absolutely stable for the class of memoryless time-invariant slope-restricted nonlinearities in $[0,k]$;  if there is a suitable $M\in\mathcal{M}_{\mbox{odd}}$ for $1/k+G$ then it is absolutely stable for the class of memoryless time-invariant odd slope-restricted nonlinearities \cite{Zames68,Carrasco:EJC}.

\subsection{Other notation}
Let $x = [y]_{[z,w]} $ denote $y$ modulo the interval $[z,w]$: i.e. the unique number $x\in[z,w)$ such that there is an integer $n$ with $y = x + n(w-z)$.

In our statement of results (i.e. Sections~\ref{Main},~\ref{Int} and~\ref{Exa}) phase is expressed in degrees. In the technical proofs (i.e. the Appendix) phase is expressed in radians.

\subsection{Duality approach}

The following result is similar in spirit to that in \cite{Jonsson:96} where a proof is sketched for the odd case. Both results can be derived from the duality theory of J\"{o}nsson \cite{Jonsson96thesis,Jonsson97,Jonsson99}; see \cite{Zhang:21} for the corresponding derivation in the discrete-time case. Nevertheless, several details are different. In particular, in  \cite{Jonsson:96}  only rational plants $G$ and rational multipliers $M$ are considered; this excludes both plants with delay and so-called ``delay multipliers.'' Expressing the results in terms of single parameter delay multipliers also gives insight. We exclude frequencies $\omega=0$ and $\omega\rightarrow\infty$; it is immediate that we must have  $\mbox{Re}\left \{M(0)G(0)\right \}\geq 0$; by contrast $M(\infty)$ need not be well-defined in our case.

\begin{definition}
Define the single parameter delay multipliers $M^-_\tau$ and $M^+_\tau$ as $M^-_\tau(s) = 1 -e^{-\tau s}$ and  $M^+_\tau(s) = 1 +e^{-\tau s}$ with $\tau\in\mathbb{R}\backslash 0$.
Let $\mathcal{M}^- \subset \mathcal{M}$ be the set $\mathcal{M}^- = \{M^-_{\tau} \,:\, \tau \in \mathbb{R} \backslash 0\}$.  Let 
$\mathcal{M}^+ \subset \mathcal{M_{\mbox{odd}}}$ 
be the set $\mathcal{M}^+ = \{M^+_\tau\,:\, \tau \in \mathbb{R}\backslash 0\}$.
\end{definition}

\begin{subtheorem}{theorem}
\begin{theorem}\label{Jthm_a}
Let $G$ be causal, LTI and stable. Assume there exist $0<\omega_1<\cdots<\omega_N<\infty$, and non-negative $\lambda_1, \lambda_2, \ldots, \lambda_N$, where $\sum_{r=1}^N\lambda_r>0$, such that
\begin{equation}\label{thm1_ineq}
\sum_{r=1}^N\lambda_r \mbox{Re}\left \{
				 M^-_{\tau}(j\omega_r) G(j\omega_r)
			\right \} \leq 0 \mbox{ for all }M^-_{\tau}\in\mathcal{M}^-.
\end{equation}
Then there is no suitable  $M\in\mathcal{M}$ for $G$.
\end{theorem}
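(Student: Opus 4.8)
The plan is to argue by contradiction. Suppose a suitable $M\in\mathcal{M}$ did exist; I will show that the hypothesis (\ref{thm1_ineq}) forces the same weighted sum, now evaluated at $M$, to be non-positive, while suitability forces it strictly positive. To organize this, introduce the linear functional $\Lambda(M)=\sum_{r=1}^N\lambda_r\mbox{Re}\{M(j\omega_r)G(j\omega_r)\}$ defined on LTI operators. If $M$ is suitable then each summand obeys $\mbox{Re}\{M(j\omega_r)G(j\omega_r)\}>\varepsilon$, and since the $\lambda_r\geq 0$ are not all zero we get $\Lambda(M)\geq\varepsilon\sum_r\lambda_r>0$. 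The hypothesis (\ref{thm1_ineq}) is exactly the statement $\Lambda(M^-_\tau)\leq 0$ for every $\tau\neq 0$, so the entire task is to propagate this from the delay multipliers to an arbitrary $M\in\mathcal{M}$.

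The key step is a conic decomposition of $M$ into delay multipliers. Writing the frequency response as $M(j\omega)=m_0-\hat{h}(j\omega)-\sum_i h_i e^{-j\omega t_i}$ with $\hat{h}(j\omega)=\int_0^\infty h(t)e^{-j\omega t}\,dt$, and setting the slack $s=m_0-\|h\|_1-\sum_i h_i\geq 0$, I would verify the identity
\begin{equation}
M(j\omega)=s+\int_0^\infty h(t)\,M^-_t(j\omega)\,dt+\sum_i h_i\,M^-_{t_i}(j\omega),
\end{equation}
which represents $M$ as a non-negative combination of the constant $1$ and the delay multipliers $M^-_t(j\omega)=1-e^{-j\omega t}$ (using $\|h\|_1=\int_0^\infty h(t)\,dt$ since $h\geq 0$). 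Applying $\Lambda$ and using its linearity gives $\Lambda(M)=s\,\Lambda(1)+\int_0^\infty h(t)\,\Lambda(M^-_t)\,dt+\sum_i h_i\,\Lambda(M^-_{t_i})$. Because $h(t)\geq 0$, $h_i\geq 0$, and $\Lambda(M^-_t)\leq 0$ for all $t\neq 0$ by hypothesis, every delay-multiplier contribution here is non-positive.

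The remaining obstacle, which I expect to be the delicate point, is the constant term $s\,\Lambda(1)$, since the hypothesis says nothing directly about the constant multiplier. I would dispatch it by a time-averaging argument. From $M^-_\tau(j\omega)=1-e^{-j\omega\tau}$ one has $\Lambda(M^-_\tau)=\Lambda(1)-\sum_r\lambda_r\mbox{Re}\{e^{-j\omega_r\tau}G(j\omega_r)\}$, and the function $\tau\mapsto\sum_r\lambda_r\mbox{Re}\{e^{-j\omega_r\tau}G(j\omega_r)\}$ is a finite trigonometric sum containing no constant term because every $\omega_r\neq 0$; hence its average $\tfrac1T\int_0^T(\cdot)\,d\tau\to 0$ as $T\to\infty$. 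Averaging the inequality $\Lambda(M^-_\tau)\leq 0$ over $\tau\in[0,T]$ and letting $T\to\infty$ then yields $\Lambda(1)\leq 0$.

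Combining the pieces, $s\geq 0$ together with $\Lambda(1)\leq 0$ makes the constant term non-positive as well, so $\Lambda(M)\leq 0$ for every $M\in\mathcal{M}$, contradicting $\Lambda(M)>0$ for a suitable $M$. This completes the argument. The only routine verifications left are that each $M^-_\tau$ indeed lies in $\mathcal{M}$ (so the hypothesis is meaningful and the decomposition stays inside the class), and that the real part, the finite sum over $r$, and the $\mathcal{L}_1$ integral in $t$ may be freely interchanged, which is justified since $\|h\|_1\leq m_0<\infty$.
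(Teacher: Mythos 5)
Your proof is correct and follows essentially the same route as the paper's: the same decomposition of $M$ into a non-negative constant plus non-negatively weighted delay multipliers (the paper writes it as $\bar{m}_0 - \int h(t)e^{-j\omega t}\,dt + \sum_i h_i M^-_{t_i}(j\omega)$ with $\bar{m}_0 \geq \|h\|_1$, which is your decomposition after expanding $\int h(t)M^-_t\,dt$), the same pointwise application of hypothesis (\ref{thm1_ineq}) under the $h$-weighted integral, and the identical time-averaging argument over $\tau$ to establish that the constant term $\sum_r \lambda_r \mbox{Re}\{G(j\omega_r)\}$ is non-positive. The only cosmetic discrepancy is that you write the integrals over $[0,\infty)$ whereas the class $\mathcal{M}$ admits noncausal multipliers so they should run over all of $\mathbb{R}$; since the hypothesis covers every $\tau\in\mathbb{R}\backslash 0$, your argument carries over verbatim.
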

\begin{theorem}\label{Jthm_b}
Let $G$ be causal, LTI and stable. Assume, in addition to the conditions of Theorem~\ref{Jthm_a}, that
\begin{equation}\label{thm1b_ineq}
\sum_{r=1}^N\lambda_r \mbox{Re}\left \{
				 M^+_{\tau}(j\omega_r) G(j\omega_r)
			\right \} \leq 0 \mbox{ for all }M^+_{\tau}\in\mathcal{M}^+.
\end{equation}
Then there is no suitable  $M\in\mathcal{M}_{\mbox{odd}}$ for $G$.
\end{theorem}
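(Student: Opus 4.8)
\emph{Proof proposal.} I would prove both theorems by contradiction, and the whole argument rests on one structural fact: every element of $\mathcal{M}$ (resp.\ $\mathcal{M}_{\mbox{odd}}$) is a \emph{nonnegative} combination of the single-parameter delay multipliers together with the constant multiplier $1$. The hypotheses (\ref{thm1_ineq}) and (\ref{thm1b_ineq}) are then read as a dual certificate that makes a suitable multiplier impossible; this is the concrete form of the duality approach.

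First I would derive the decomposition. For $M\in\mathcal{M}$ put $\rho = m_0 - \|h\|_1 - \sum_i h_i \ge 0$; since $h\ge 0$ we have $\|h\|_1 = \int h(t)\,dt$, so grouping $1-e^{-j\omega t}=M^-_t(j\omega)$ gives
\[
M(j\omega)=\rho+\int_{-\infty}^{\infty}h(t)\,M^-_{t}(j\omega)\,dt+\sum_{i=1}^{\infty}h_i\,M^-_{t_i}(j\omega).
\]
For $M\in\mathcal{M}_{\mbox{odd}}$ the same rearrangement with $\rho=m_0-\|h\|_1-\sum_i|h_i|$ routes every negative coefficient into an $M^+$ term, using $|h_i|-h_ie^{-j\omega t_i}=|h_i|\,M^+_{t_i}(j\omega)$ when $h_i<0$:
\[
M(j\omega)=\rho+\int_{-\infty}^{\infty}h(t)\,M^-_{t}(j\omega)\,dt+\sum_{h_i>0}h_i\,M^-_{t_i}(j\omega)+\sum_{h_i<0}|h_i|\,M^+_{t_i}(j\omega),
\]
where all of $\rho$, $h(t)$, $h_i$, $|h_i|$ are nonnegative.

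Next, suppose a suitable $M$ existed. By Definition~\ref{def1} we would have $\mbox{Re}\{M(j\omega_r)G(j\omega_r)\}>\varepsilon>0$ at each $\omega_r$, hence $\sum_r\lambda_r\mbox{Re}\{M(j\omega_r)G(j\omega_r)\}>0$ because the $\lambda_r$ are nonnegative and not all zero. I would then substitute the decomposition and push the finite weighted sum $\sum_r\lambda_r\mbox{Re}\{\,\cdot\,G(j\omega_r)\}$ through the integral and the series --- legitimate since $\|h\|_1<\infty$, $\sum_i h_i\le m_0<\infty$, and each $G(j\omega_r)$ is a fixed constant. The delay-multiplier contributions are then nonpositive term by term: the $M^-$ brackets by (\ref{thm1_ineq}) and (for the odd case) the $M^+$ brackets by (\ref{thm1b_ineq}), each multiplied by a nonnegative weight.

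The only term not handled directly by the hypotheses is the constant component $\rho A$, where $A=\sum_r\lambda_r\mbox{Re}\{G(j\omega_r)\}$, and I expect this to be the main obstacle. I would settle it by averaging in $\tau$. Since $\mbox{Re}\{M^-_\tau(j\omega_r)G(j\omega_r)\}=\mbox{Re}\{G(j\omega_r)\}-\mbox{Re}\{e^{-j\omega_r\tau}G(j\omega_r)\}$, inequality (\ref{thm1_ineq}) states $A\le\sum_r\lambda_r\mbox{Re}\{e^{-j\omega_r\tau}G(j\omega_r)\}$ for all $\tau\neq0$; averaging the right-hand side over $\tau\in[-T,T]$ and letting $T\to\infty$ kills every oscillatory term because each $\omega_r>0$, leaving $A\le0$. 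Hence $\rho A\le0$, every term in the expansion is nonpositive, and $\sum_r\lambda_r\mbox{Re}\{M(j\omega_r)G(j\omega_r)\}\le0$, contradicting the strict positivity above. This proves Theorem~\ref{Jthm_a}; the identical chain, with the $M^+$ terms controlled by (\ref{thm1b_ineq}), proves Theorem~\ref{Jthm_b}.
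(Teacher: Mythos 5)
Your argument for Theorem~\ref{Jthm_a} is sound and is essentially the paper's own proof in different packaging (decompose into a nonnegative constant plus nonnegative combinations of delay multipliers, kill the delay terms with the hypothesis, kill the constant term by averaging over $\tau$). But for Theorem~\ref{Jthm_b} --- the statement at hand --- there is a genuine gap. Your decomposition for $M\in\mathcal{M}_{\mbox{odd}}$ ends with the claim that ``all of $\rho$, $h(t)$, $h_i$, $|h_i|$ are nonnegative.'' That is false for the odd class: Definition~\ref{def2b} drops the pointwise constraints $h(t)\geq 0$ and $h_i\geq 0$ of Definition~\ref{def2a} and keeps only the norm bound $\|h\|_1+\sum_i|h_i|\leq m_0$. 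You correctly route the negative discrete coefficients $h_i<0$ into $M^+$ terms, but you leave the continuous part as $\int h(t)\,M^-_t(j\omega)\,dt$ with $h$ treated as nonnegative. Where $h(t)<0$ this is a \emph{negative} multiple of $M^-_t$, and hypothesis (\ref{thm1_ineq}) then yields an inequality of the wrong sign, so the term-by-term nonpositivity on which your contradiction rests fails exactly there. This is not a cosmetic slip: the sign-indefiniteness of $h$ is the whole difficulty that distinguishes Theorem~\ref{Jthm_b} from Theorem~\ref{Jthm_a}.

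The repair uses the same identity you already invoked for $h_i<0$: split $h=h_+-h_-$ into its positive and negative parts and write $-h(t)e^{-j\omega t}=h_+(t)M^-_t(j\omega)+h_-(t)M^+_t(j\omega)-|h(t)|$, so the negative part of $h$ is also routed into $M^+$ multipliers and controlled by (\ref{thm1b_ineq}); the constant collected is still $\rho=m_0-\|h\|_1-\sum_i|h_i|\geq 0$, and your averaging argument giving $\sum_r\lambda_r\mbox{Re}\{G(j\omega_r)\}\leq 0$ finishes as before. This is precisely the extra work the paper's proof does for the odd case: it keeps $|h(t)|$ in the estimates and combines (\ref{thm1_ineq}) and (\ref{thm1b_ineq}) into the single absolute-value inequality (\ref{thm1_ineq_alt_b}), which is what handles a sign-indefinite $h$.
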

\end{subtheorem}

\begin{remark} The observation is made in \cite{Chang:12} that by the Stone-Weirstrass theorem it is sufficient to characterise $\mathcal{M}$ in terms of delay multipliers: i.e. as the class of LTI $M:\mathcal{L}_{2}\rightarrow\mathcal{L}_{2}$  whose impulse response is given by
\begin{equation}
m(t) = m_0 \delta(t)-\sum_{i=1}^{\infty}h_i \delta(t-t_i),
\end{equation}
with
\begin{equation}
h_i\geq 0 \mbox{ for all } i\mbox{ and }
\sum_{i=1}^{\infty} h_i \leq m_0.
\end{equation}
Similarly $\mathcal{M}_{\mbox{odd}}$ can be characterised as the  class of LTI $M:\mathcal{L}_{2}\rightarrow\mathcal{L}_{2}$  whose impulse response is given by
\begin{equation}
m(t) = m_0\delta(t) -\sum_{i=1}^{\infty}h_i \delta(t-t_i),
\end{equation}
with
\begin{equation}
\sum_{i=1}^{\infty} |h_i| \leq m_0.
\end{equation}
Such delay multipliers are excluded entirely from \cite{Jonsson:96}, but in this sense both Theorems~\ref{Jthm_a} and~\ref{Jthm_b} follow almost immediately.
\end{remark}


\subsection{Frequency interval approach}
In \cite{Wang:18} we presented the following phase limitation for the frequency intervals $[\alpha,\beta]$ and $[\gamma,\delta]$.

\begin{subtheorem}{theorem}
\begin{theorem}[\cite{Wang:18}]\label{Meg_a}
Let $0<\alpha<\beta<\gamma<\delta$ and define
\begin{equation}
\rho^c = \sup_{t>0}\frac{|\psi(t)|}{\phi(t)},
\end{equation}
with
\begin{equation}
\begin{split}
\psi(t) & = \frac{\lambda \cos (\alpha t)}{t}-\frac{\lambda \cos (\beta t)}{t}- \frac{\mu \cos (\gamma t)}{t}+\frac{\mu \cos (\delta t)}{t},\\
\phi(t) & = \lambda(\beta-\alpha)+\kappa\mu(\delta-\gamma)+\phi_1(t),\\
\phi_1(t) & = \frac{\lambda \sin (\alpha t)}{t}-\frac{\lambda \sin (\beta t)}{t}+ \frac{\kappa\mu \sin (\gamma t)}{t}-\frac{\kappa\mu \sin (\delta t)}{t},
\end{split}
\end{equation}
and with $\lambda>0$ and $\mu>0$ satisfying
\begin{equation}
\frac{\lambda}{\mu} = \frac{\delta^2-\gamma^2}{\beta^2-\alpha^2},
\end{equation}
and $\kappa>0$.
Let $M$ be an OZF multiplier and suppose
\begin{equation}\label{M_up}
\mbox{Im}(M(j\omega))>\rho\mbox{Re}(M(j\omega))\mbox{ for all } \omega\in[\alpha,\beta],
\end{equation}
and
\begin{equation}\label{M_dn}
\mbox{Im}(M(j\omega))<-\kappa\rho\mbox{Re}(M(j\omega))\mbox{ for all } \omega\in[\gamma,\delta],
\end{equation}
for some $\rho>0$. Then $\rho<\rho^c$ if $M\in\mathcal{M}$.

The result also holds if we replace (\ref{M_up}) and (\ref{M_dn}) with
\begin{equation}
\mbox{Im}(M(j\omega))<-\rho\mbox{Re}(M(j\omega))\mbox{ for all } \omega\in[\alpha,\beta],
\end{equation}
and
\begin{equation}
\mbox{Im}(M(j\omega))>\kappa\rho\mbox{Re}(M(j\omega))\mbox{ for all } \omega\in[\gamma,\delta].
\end{equation}
\end{theorem}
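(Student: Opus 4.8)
The plan is to test the two hypothesised phase inequalities against the weights $\lambda$ and $\mu$, integrate over the respective intervals to obtain a single quantity $I$ that is strictly positive, and then to show that the impulse-response structure of an $M\in\mathcal{M}$ forces $I\le(\rho^c-\rho)\int\phi\,d\nu$ for a suitable non-negative measure $\nu$; choosing $\rho\ge\rho^c$ then contradicts $I>0$.

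Concretely, I would write $M\in\mathcal{M}$ through (\ref{def_m}) and introduce the non-negative measure $d\nu(t)=h(t)\,dt+\sum_i h_i\,\delta(t-t_i)$, which has total mass $\|\nu\|=\|h\|_1+\sum_i h_i\le m_0$, so that $\mbox{Re}\,M(j\omega)=m_0-\int\cos(\omega t)\,d\nu(t)\ge 0$ and $\mbox{Im}\,M(j\omega)=\int\sin(\omega t)\,d\nu(t)$. Define
\begin{equation}
I=\lambda\int_\alpha^\beta\!\big[\mbox{Im}\,M(j\omega)-\rho\,\mbox{Re}\,M(j\omega)\big]d\omega-\mu\int_\gamma^\delta\!\big[\mbox{Im}\,M(j\omega)+\kappa\rho\,\mbox{Re}\,M(j\omega)\big]d\omega .
\end{equation}
Under (\ref{M_up})--(\ref{M_dn}) both integrands are strictly positive continuous functions on intervals of positive length, so $I>0$.

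Next I would evaluate $I$ by exchanging the $\omega$-integrals with $\int\cdot\,d\nu$ (Fubini applies since $\nu$ is finite and the integrands bounded). The elementary primitives $\int_\alpha^\beta\sin(\omega t)\,d\omega=(\cos\alpha t-\cos\beta t)/t$ and $\int_\alpha^\beta\cos(\omega t)\,d\omega=(\sin\beta t-\sin\alpha t)/t$ reproduce exactly the functions in the statement: the imaginary-part terms assemble into $\int\psi(t)\,d\nu(t)$, the real-part terms into $\int\phi_1(t)\,d\nu(t)$, and the remaining constant contribution is $-\rho m_0\,C$ with $C=\lambda(\beta-\alpha)+\kappa\mu(\delta-\gamma)>0$. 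Hence $I=\int\psi\,d\nu-\rho\big(m_0C+\int\phi_1\,d\nu\big)$. Since $C>0$ and $\|\nu\|\le m_0$ we have $m_0C\ge\int C\,d\nu$, so $m_0C+\int\phi_1\,d\nu\ge\int(C+\phi_1)\,d\nu=\int\phi\,d\nu$; as $\rho>0$ this yields
\begin{equation}
I\le\int\big[\psi(t)-\rho\,\phi(t)\big]\,d\nu(t).
\end{equation}

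The remaining ingredient is positivity of $\phi$. Writing $\phi(t)=\lambda\int_\alpha^\beta(1-\cos\omega t)\,d\omega+\kappa\mu\int_\gamma^\delta(1-\cos\omega t)\,d\omega$ shows $\phi(t)>0$ for every $t\neq 0$ (and $\phi(0)=C>0$ by continuity), since $1-\cos\ge0$ cannot vanish throughout an interval. Because $\psi$ is odd and $\phi$ even, $\sup_{t>0}|\psi|/\phi$ controls all $t$, so $\psi(t)\le|\psi(t)|\le\rho^c\phi(t)$, i.e.\ $\psi(t)-\rho\phi(t)\le(\rho^c-\rho)\phi(t)$ for all $t$. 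Therefore $I\le(\rho^c-\rho)\int\phi\,d\nu$, and since $\nu\ge0$, $\phi>0$ give $\int\phi\,d\nu\ge0$, the assumption $\rho\ge\rho^c$ would force $I\le0$, contradicting $I>0$. Hence $\rho<\rho^c$; the variant with the sign conditions swapped follows by applying this to the reflected multiplier $M(-j\omega)$, which remains in $\mathcal{M}$.

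I expect the \textbf{main obstacle} to be the bookkeeping in the second step: correctly matching the swapped integrals to $\psi$ and $\phi_1$, and in particular using the mass bound $\|\nu\|\le m_0$ with $C>0$ to absorb the constant $m_0C$ into $\int\phi\,d\nu$. This, together with $\int\phi\,d\nu\ge0$, is precisely where non-negativity of $\nu$ (hence $M\in\mathcal{M}$ rather than $\mathcal{M}_{\mbox{odd}}$) is indispensable. Note that the ratio constraint $\lambda/\mu=(\delta^2-\gamma^2)/(\beta^2-\alpha^2)$ does not enter the inequality itself; its effect is to cancel the linear term of $\psi$ at the origin (so $\psi(t)=O(t^3)$), which is what the underlying duality optimisation selects to sharpen $\rho^c$.
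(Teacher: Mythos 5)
There is a wrinkle you could not have known: this paper contains \emph{no} proof of Theorem~\ref{Meg_a}. It is imported verbatim from \cite{Wang:18} (note the citation in the theorem header), and the appendix only proves Theorems~\ref{Jthm_a}--\ref{Jthm_b}, \ref{thm:2a}--\ref{thm:2b}, Corollaries~\ref{m_corollary_a}--\ref{m_corollary_b} and the equivalence Theorems~\ref{Meg_equiv_a}--\ref{Meg_equiv_b}. So the comparison can only be against the cited reference, not against anything in this document; what you have done is supply the omitted proof.

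On its merits, your argument is correct, and it is the natural duality pairing that underlies the frequency-interval approach. Each step checks out: representing $M\in\mathcal{M}$ via (\ref{def_m}) by the non-negative finite measure $d\nu(t)=h(t)\,dt+\sum_i h_i\,\delta(t-t_i)$ with $\|\nu\|\leq m_0$ is exactly where positivity of the class enters; the strict hypotheses (\ref{M_up})--(\ref{M_dn}) on intervals of positive length give $I>0$; Fubini is legitimate (finite measure, bounded integrands) and your primitives do reassemble $\psi$ and $\phi_1$; the absorption $m_0C\geq C\|\nu\|$ with $\rho>0$ gives $I\leq\int(\psi-\rho\phi)\,d\nu$; and the parity observation ($\psi$ odd, $\phi$ even) is genuinely needed, since $\nu$ is supported on all of $\mathbb{R}$ (OZF multipliers need not be causal), so the bound $|\psi|\leq\rho^c\phi$ must be extended from $t>0$ to $t<0$. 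The time-reversal argument for the swapped-sign variant is valid for the same reason: $\mathcal{M}$ carries no causality or sign restriction on the support, so $t\mapsto m(-t)$ stays in $\mathcal{M}$ and conjugates the frequency response. Two minor points are worth recording. First, if $\nu$ has an atom at $t=0$ one should define $\psi(0)=0$ and $\phi(0)=\lambda(\beta-\alpha)+\kappa\mu(\delta-\gamma)$ by continuity, under which your pointwise inequality still holds. Second, as you yourself note, the constraint $\lambda/\mu=(\delta^2-\gamma^2)/(\beta^2-\alpha^2)$ is never used, so you have proved the statement for arbitrary $\lambda,\mu>0$; this is a strengthening, not an error --- the constraint's role is to make the resulting $\rho^c$ as useful as possible (it kills the $O(t)$ term of $\psi$ at the origin), which is also why it reappears in the paper's own limit computation in the proof of Corollary~\ref{m_corollary_a}, where $a\lambda=b\mu$.
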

\begin{theorem}[\cite{Wang:18}]\label{Meg_b}
Suppose, in addition to the conditions of Theorem~\ref{Meg_a}, that
\begin{equation}
\rho^c_{\mbox{odd}}  = \sup_{t>0}\frac{|\psi(t)|}{\tilde{\phi}(t)},
\end{equation}
with
\begin{equation}
\tilde{\phi}(t)  =  \lambda(\beta-\alpha)+\kappa\mu(\delta-\gamma)-|\phi_1(t)|.
\end{equation}
Then  $\rho<\rho^c_{\mbox{odd}}$ if $M\in\mathcal{M}_{\mbox{odd}}$.
\end{theorem}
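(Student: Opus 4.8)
The plan is to reuse the variational construction behind Theorem~\ref{Meg_a}, changing only the single place where nonnegativity of the multiplier's kernel is invoked. First I would represent any $M\in\mathcal{M}_{\mbox{odd}}$ through its kernel as $M(j\omega)=m_0-\int_0^\infty e^{-j\omega t}\,d\nu(t)$, where $d\nu(t)=h(t)\,dt+\sum_i h_i\,\delta(t-t_i)$ is now a \emph{signed} measure with total variation $\|\nu\|_{\mathrm{TV}}=\|h\|_1+\sum_i|h_i|\le m_0$; this is the only structural difference from the \emph{nonnegative} $\nu$ available for $M\in\mathcal{M}$ in Theorem~\ref{Meg_a}. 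Splitting into real and imaginary parts gives $\mbox{Re}\,M(j\omega)=m_0-\int_0^\infty\cos(\omega t)\,d\nu(t)$ and $\mbox{Im}\,M(j\omega)=\int_0^\infty\sin(\omega t)\,d\nu(t)$.

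Next I would form the same linear functional used for Theorem~\ref{Meg_a},
\[
L(M)=\lambda\int_\alpha^\beta\big[\rho\,\mbox{Re}\,M(j\omega)-\mbox{Im}\,M(j\omega)\big]\,d\omega+\mu\int_\gamma^\delta\big[\kappa\rho\,\mbox{Re}\,M(j\omega)+\mbox{Im}\,M(j\omega)\big]\,d\omega.
\]
The phase hypotheses (\ref{M_up})--(\ref{M_dn}) make both integrands strictly negative on their respective intervals, so $L(M)<0$. Substituting the kernel representation and interchanging the order of integration, the $\omega$-integrals produce exactly the trigonometric kernels in the statement: the $\mbox{Im}$-terms assemble into $\psi(t)$, the $\rho\,\mbox{Re}$-terms into $-\rho\phi_1(t)$, and the constant $m_0$ contributions give $P:=\lambda(\beta-\alpha)+\kappa\mu(\delta-\gamma)$, yielding
\[
L(M)=\rho m_0 P-\int_0^\infty\big[\psi(t)-\rho\phi_1(t)\big]\,d\nu(t).
\]

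The decisive step is the worst-case bound on the integral, and this is exactly where the odd case departs from Theorem~\ref{Meg_a}. Because $\nu$ is signed with $\|\nu\|_{\mathrm{TV}}\le m_0$, I would use $\int_0^\infty[\psi-\rho\phi_1]\,d\nu\le m_0\sup_{t>0}|\psi(t)-\rho\phi_1(t)|$; for a nonnegative $\nu$ one instead controls $\psi-\rho\phi_1$ without absolute values, which is precisely what produces $\phi$ rather than $\tilde\phi$ in Theorem~\ref{Meg_a}. Applying the triangle inequality $|\psi(t)-\rho\phi_1(t)|\le|\psi(t)|+\rho|\phi_1(t)|$ and noting $P-|\phi_1(t)|=\tilde\phi(t)$, one sees that $\rho\ge\rho^c_{\mbox{odd}}=\sup_{t>0}|\psi(t)|/\tilde\phi(t)$ forces $|\psi(t)|\le\rho\tilde\phi(t)$, hence $|\psi(t)-\rho\phi_1(t)|\le\rho P$ for every $t$, and therefore $L(M)\ge0$. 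This contradicts $L(M)<0$, so the phase hypotheses can hold only if $\rho<\rho^c_{\mbox{odd}}$. The same bound applies verbatim to the reversed sign configuration, since $|\psi\pm\rho\phi_1|\le|\psi|+\rho|\phi_1|$ is symmetric under $\psi\mapsto-\psi$.

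I expect the genuinely delicate points to be bookkeeping rather than conceptual, since Theorem~\ref{Meg_a} already supplies the functional and the kernel identities. One must check that $\tilde\phi(t)>0$ for all $t>0$, so that $\rho^c_{\mbox{odd}}$ is well defined: this holds because $|\phi_1(t)|=\big|\lambda\int_\alpha^\beta\cos(\omega t)\,d\omega+\kappa\mu\int_\gamma^\delta\cos(\omega t)\,d\omega\big|<P$ strictly once $t>0$, as $\cos(\omega t)$ cannot equal $\pm1$ throughout an interval. One should also confirm that the ratio condition $\lambda/\mu=(\delta^2-\gamma^2)/(\beta^2-\alpha^2)$ inherited from Theorem~\ref{Meg_a} makes $\psi(t)=O(t^3)$ while $\tilde\phi(t)=O(t^2)$ as $t\to0^+$, so the supremum is not spoiled near the origin. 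The whole of the additional content is thus the replacement of the nonnegative measure by a signed one together with the triangle-inequality relaxation; the conservatism of that inequality is exactly why $\tilde\phi$, rather than a sharper kernel, appears.
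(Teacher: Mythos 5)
You should first note a structural point: Theorem~\ref{Meg_b} is stated in this paper as an imported result --- the bracketed citation \cite{Wang:18} in its header means the paper offers no proof of it at all (the Appendix proves only Theorems~\ref{Jthm_a}--\ref{Jthm_b}, \ref{thm:2a}--\ref{thm:2b}, Corollaries~\ref{m_corollary_a}--\ref{m_corollary_b} and Theorems~\ref{Meg_equiv_a}--\ref{Meg_equiv_b}). So there is no in-paper proof to compare against; the comparison can only be with the argument in \cite{Wang:18}, and your proposal is in fact that standard frequency-interval duality argument: integrate the two strict phase conditions against the weights $\lambda$, $\mu$ to get $L(M)<0$, substitute the kernel representation, interchange the order of integration to produce $\psi$, $\phi_1$ and the constant $P=\lambda(\beta-\alpha)+\kappa\mu(\delta-\gamma)$, and bound the kernel integral by the total-variation constraint. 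I checked the algebra of $L(M)=\rho m_0 P-\int[\psi(t)-\rho\phi_1(t)]\,d\nu(t)$ and the contradiction step, and they are correct; the triangle inequality $|\psi\pm\rho\phi_1|\le|\psi|+\rho|\phi_1|$ is indeed exactly the point at which the signed kernel forces $\tilde\phi=P-|\phi_1|$ in place of $\phi=P+\phi_1$, and your two side remarks (strict positivity of $\tilde\phi$ for $t>0$, and the role of the ratio $\lambda/\mu=(\delta^2-\gamma^2)/(\beta^2-\alpha^2)$ in giving $\psi=O(t^3)$ against $\tilde\phi\sim ct^2$ near $t=0$, so the supremum is finite) are both right.

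One technical correction you should make: OZF multipliers in $\mathcal{M}_{\mbox{odd}}$ are not causal in general. In Definitions~\ref{def2a} and~\ref{def2b} the kernel $h$ is supported on all of $\mathbb{R}$ (the paper's own proofs integrate $\int_{-\infty}^{\infty}h(t)e^{-j\omega t}\,dt$) and the delays $t_i$ may be negative, so your representation must read $M(j\omega)=m_0-\int_{-\infty}^{\infty}e^{-j\omega t}\,d\nu(t)$, and your supremum must be taken over $t\in\mathbb{R}\setminus\{0\}$ rather than $t>0$. This costs nothing, but you should say why: $\psi$ is an odd function of $t$ and $\phi_1$ is even, so $|\psi(t)-\rho\phi_1(t)|\le|\psi(|t|)|+\rho|\phi_1(|t|)|\le\rho P$ holds for all $t\neq 0$ (and the limiting value at $t=0$ is exactly $\rho P$, so atoms near the origin cause no harm). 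With that repair the argument is complete, including the reversed-sign configuration, which your parity remark already covers.
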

\end{subtheorem}

\section{Main results: duality approach}\label{Main}

Applying Theorem~\ref{Jthm_a} or~\ref{Jthm_b} with $N=1$ yields no significant result beyond the trivial statement that if $\mbox{Re}[G(j\omega)]<0$ and $\mbox{Im}[G(j\omega)]=0$ at any $\omega$ then there can be no suitable multiplier. This is in contrast with the discrete-time case where there are non-trivial phase limitations at single frequencies \cite{Zhang:21}. 

Even with $N=2$, it is not straightforward to apply Theorems~\ref{Jthm_a} or~\ref{Jthm_b} directly, as they require an optimization at each pair of frequencies. Nevertheless, setting $N=2$ yields the following phase limitations:

\begin{subtheorem}{theorem}
\begin{theorem}\label{thm:2a}
Let $a, b \in \mathbb{Z}^+$ and  let $G$ be causal, LTI and stable. If there exists $\omega_0\in\mathbb{R}$ such that
 \begin{align}\label{G_ineq}
\left |
\frac{
b\angle G(aj\omega_0 ) - a \angle G(bj\omega_0)
}
{a+b-p}
\right |>  180^o,
\end{align}
with $p=1$
 then there is no suitable $M\in\mathcal{M}$ for $G$.
\end{theorem}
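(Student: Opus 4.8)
The plan is to apply Theorem~\ref{Jthm_a} with $N=2$, taking the two test frequencies to be $\omega_1=a\omega_0$ and $\omega_2=b\omega_0$. Since $G$ is real, $\angle G(-j\omega)=-\angle G(j\omega)$ and the hypothesis is invariant under $\omega_0\mapsto-\omega_0$, so I may assume $\omega_0>0$; I may also take $a<b$ and, after replacing $(\omega_0,a,b)$ by $(d\omega_0,a/d,b/d)$ with $d=\gcd(a,b)$, assume $\gcd(a,b)=1$. This replacement only strengthens the hypothesis, since $a+b-1\ge a/d+b/d-1$ forces $|b\angle G(ja\omega_0)-a\angle G(jb\omega_0)|>(a+b-1)\pi$ to imply the corresponding reduced inequality. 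It then suffices, by Theorem~\ref{Jthm_a}, to exhibit non-negative weights $\lambda_1,\lambda_2$, not both zero, for which (\ref{thm1_ineq}) holds for every $M^-_\tau\in\mathcal{M}^-$.

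Writing $\theta_a=\angle G(ja\omega_0)$, $\theta_b=\angle G(jb\omega_0)$ and using $M^-_\tau(j\omega)=1-e^{-j\tau\omega}$, a direct computation gives
\[ \mbox{Re}\left\{M^-_\tau(jk\omega_0)G(jk\omega_0)\right\}=|G(jk\omega_0)|\big[\cos\theta_k-\cos(k\tau\omega_0-\theta_k)\big],\qquad k\in\{a,b\}. \]
Introducing the normalised variable $\phi=\tau\omega_0$ and the non-negative constants $c_k=\lambda_k|G(jk\omega_0)|$, the inequality (\ref{thm1_ineq}) becomes
\[ c_a\cos(a\phi-\theta_a)+c_b\cos(b\phi-\theta_b)\ \ge\ c_a\cos\theta_a+c_b\cos\theta_b\quad\text{for all }\phi, \]
that is, $\phi=0$ must be a global minimiser of $g(\phi):=c_a\cos(a\phi-\theta_a)+c_b\cos(b\phi-\theta_b)$. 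Thus the whole problem collapses to selecting a single weight ratio $c_a:c_b\ge0$ that places the global minimum of this two-tone trigonometric function at the origin.

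The ratio is forced by stationarity: $g'(0)=0$ requires $ac_a\sin\theta_a+bc_b\sin\theta_b=0$, fixing $c_a:c_b=(-b\sin\theta_b):(a\sin\theta_a)$, which is realisable with $c_a,c_b\ge0$ precisely when $\sin\theta_a$ and $\sin\theta_b$ have opposite signs (the degenerate cases $\theta_k\in\{0,\pi\}$ being covered by a boundary weight $c_a=0$ or $c_b=0$). Part of the argument is therefore to show that the phase hypothesis guarantees this sign pattern for suitable representatives of $\theta_a,\theta_b$ modulo $2\pi$, which leave $g$ unchanged. The substantive step is to upgrade local minimality to global minimality and to pin down the exact threshold. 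I would do this by writing
\[ g(\phi)-g(0)=-2\big[c_a\sin(a\phi/2)\sin(a\phi/2-\theta_a)+c_b\sin(b\phi/2)\sin(b\phi/2-\theta_b)\big], \]
and analysing the sign of the bracket over one period. Here I expect the factor $a+b-1$ to emerge from counting the nodes of the product structure: on $[0,2\pi)$ the functions $\sin(a\phi/2)$ and $\sin(b\phi/2)$ vanish at $a$ and $b$ points respectively, sharing only the origin by coprimality, giving $a+b-1$ distinct nodes that organise the sign changes.

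The main obstacle is exactly this global step: the second-order data at the origin is insufficient, and the delicate part is controlling $g$ away from $\phi=0$ and identifying the extremal configuration that saturates the bound, so as to verify that strict violation of the $(a+b-1)\pi$ threshold (equivalently $180^{\circ}(a+b-1)$) is what forces $g(\phi)\ge g(0)$ everywhere rather than merely near the origin. The modular bookkeeping for $\theta_a,\theta_b$ — ensuring the sign requirement for $c_a,c_b\ge0$ is compatible with the hypothesis — is a secondary technicality to be resolved in tandem with the global estimate.
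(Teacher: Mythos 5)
Your setup is correct and is in fact identical to the paper's: you invoke Theorem~\ref{Jthm_a} with $N=2$ at the frequencies $a\omega_0,b\omega_0$, your coprimality reduction is valid (and more explicit than the paper's ``without loss of generality''), your computation of $\mbox{Re}\{M^-_\tau(jk\omega_0)G(jk\omega_0)\}$ is right, and the weight ratio $c_a:c_b=(-b\sin\theta_b):(a\sin\theta_a)$ forced by stationarity at $\phi=0$ is exactly the paper's choice $\lambda_a=g_b b\sin\theta$, $\lambda_b=g_a a\sin\phi$ (with $\theta_a=\pi-\phi$, $\theta_b=-\pi+\theta$, the hypothesis becoming $a\theta+b\phi<\pi$). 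The sign-pattern bookkeeping you defer is also fine: the threshold inequality itself forces $\theta_a$ near $+180^o$ and $\theta_b$ near $-180^o$ (or vice versa), so the weights are automatically non-negative.

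However, there is a genuine gap, and you have named it yourself: everything after ``the substantive step is to upgrade local minimality to global minimality'' is speculation, and that step is the entire technical content of the theorem (the paper's Lemma~\ref{lem1a}). Two things are missing. First, the threshold $p=1$ does not emerge from counting nodes of $\sin(a\phi/2)$ and $\sin(b\phi/2)$; it emerges from a B\'{e}zout argument: writing the bracket as $f_1+f_2$ with $f_1=-b\sin\theta(\cos\phi-\cos(\phi-a\omega))$, $f_2=-a\sin\phi(\cos\theta-\cos(\theta+b\omega))$, one shows $f_1>0$ only when $[a\omega]_{2\pi}\in(0,2\phi)$ and $f_2>0$ only when $[-b\omega]_{2\pi}\in(0,2\theta)$, and simultaneous positivity would force an integer $mb+na$ to lie in $(0,(a\theta+b\phi)/\pi)\subset(0,1)$, impossible; coprimality is what makes the bound tight, since $mb+na=1$ is attainable once $a\theta+b\phi>\pi$. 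Second, and more seriously, non-simultaneous positivity is not sufficient: where $f_1>0$ you must still prove $f_1\leq -f_2$ quantitatively. The paper does this on $a\omega\in[0,\phi]$ by bounding $df_1/d\omega$ via the slope restriction $\sin(\phi-a\omega)\leq\sin\phi-a\omega\cos\phi$ and bounding $df_2/d\omega$ via a chord (local convexity) estimate on $\sin(\theta+b\omega)$, then extends to all $\omega$ by symmetry and periodicity of $f_1$ together with monotone comparison of $f_2$ across reflected intervals. Nothing in your node-counting heuristic produces this comparison, so as written the proposal establishes only that $\phi=0$ is a stationary point with the correct multiplier structure, not that it is a global minimum, which is what Theorem~\ref{thm:2a} requires.
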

\begin{theorem}\label{thm:2b}
Let $a, b \in \mathbb{Z}^+$ and  let $G$  be causal, LTI and stable.
 If there exists $\omega_0\in\mathbb{R}$ such that (\ref{G_ineq}) holds
where $p=1$ when both $a$ and $b$ are odd but $p=1/2$ if either $a$ or $b$ are even,
 then there is no suitable $M\in\mathcal{M}_{\mbox{odd}}$ for $G$.
\end{theorem}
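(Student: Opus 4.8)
The plan is to invoke Theorem~\ref{Jthm_b} with $N=2$ at the two harmonically related frequencies $\omega_1=a\omega_0$ and $\omega_2=b\omega_0$, and to exhibit non-negative weights $\lambda_1,\lambda_2$ (not both zero) for which both (\ref{thm1_ineq}) and (\ref{thm1b_ineq}) hold. Without loss of generality I would take $\omega_0>0$ and $a<b$: the case $\omega_0<0$ follows from $G(-j\omega)=\overline{G(j\omega)}$, swapping $a$ and $b$ only flips the sign of the numerator in (\ref{G_ineq}), and $a=b$ makes the left-hand side of (\ref{G_ineq}) vanish. Throughout I read $\angle G$ as a principal value, which is legitimate because everything entering Theorem~\ref{Jthm_b} depends on $\angle G(ja\omega_0)$ and $\angle G(jb\omega_0)$ only modulo $2\pi$.

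First I would reduce the two delay-multiplier inequalities to a single trigonometric statement. Writing $\theta_a=\angle G(ja\omega_0)$, $\theta_b=\angle G(jb\omega_0)$ and using $M^{\mp}_\tau(j\omega)=1\mp e^{-j\omega\tau}$ gives $\mbox{Re}\{M^{\mp}_\tau(j\omega_r)G(j\omega_r)\}=|G(j\omega_r)|[\cos\theta_r\mp\cos(\theta_r-\omega_r\tau)]$. Setting $c_r=\lambda_r|G(j\omega_r)|\ge 0$, substituting $\phi=\omega_0\tau$, and defining $g(\phi)=c_1\cos(\theta_a-a\phi)+c_2\cos(\theta_b-b\phi)$, condition (\ref{thm1_ineq}) becomes $g(0)\le g(\phi)$ for all $\phi$ (that is, $\phi=0$ is a global minimiser of $g$), while (\ref{thm1b_ineq}) becomes $g(0)\le-g(\phi)$ for all $\phi$ (that is, $g(0)+\max_\phi g(\phi)\le 0$). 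Taken together the two conditions require that $\phi=0$ minimise $g$ and that $\min_\phi g+\max_\phi g\le 0$.

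Next I would construct the weights. Stationarity of $g$ at the interior point $\phi=0$ forces $g'(0)=c_1a\sin\theta_a+c_2b\sin\theta_b=0$, which fixes the ratio $c_1:c_2$ and is solvable with $c_1,c_2>0$ precisely when $\sin\theta_a$ and $\sin\theta_b$ have opposite signs; I expect the principal-value hypothesis (\ref{G_ineq}) to force $\theta_a$ and $\theta_b$ towards the ends of the principal range, where exactly such a configuration is available (the degenerate cases $\sin\theta_a=0$ or $\sin\theta_b=0$ collapse to the trivial single-frequency situation and are handled separately). With this ratio fixed, I expect (\ref{G_ineq}) to be precisely the amount of phase separation needed to promote $\phi=0$ from a local to a global minimiser. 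The role of $p$ then comes from the extra requirement $\min_\phi g+\max_\phi g\le0$ imposed by (\ref{thm1b_ineq}): when $a$ and $b$ are both odd one has $g(\phi+\pi)=-g(\phi)$, so $\max_\phi g=-\min_\phi g$ and the extra requirement is automatic, leaving the same threshold as Theorem~\ref{thm:2a}, namely $p=1$; when either $a$ or $b$ is even this antisymmetry is lost, the constraint $\min_\phi g+\max_\phi g\le0$ genuinely bites, and meeting it costs additional phase separation, which manifests as the sharper value $p=1/2$.

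I expect the main obstacle to be the global (as opposed to merely local) minimality of $g$ at $\phi=0$. Since $a,b\in\mathbb{Z}^+$ the function $g$ is periodic with only finitely many critical points, so the task is to certify $g(0)\le g(\phi)$ at each of them; the bookkeeping amounts to tracking how the two arguments $\theta_a-a\phi$ and $\theta_b-b\phi$ wind as $\phi$ runs over one period, and it is this count that produces the denominator $a+b-p$ and pins the threshold at $180^o$. A clean way to discharge it would be to supply an explicit non-negative factorisation of $g(\phi)-g(0)$ --- for instance, in the representative case $a=1$, $b=2$ the optimal choice makes $g(\phi)-g(0)$ a positive multiple of $\sin^4(\phi/2)$ --- so that non-negativity is manifest; organising such a Fej\'er--Riesz-type certificate for general $a,b$ is the delicate step. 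Because a valid certificate persists at the threshold itself, the strict inequality in (\ref{G_ineq}) is comfortably sufficient for the conclusion of Theorem~\ref{Jthm_b}, and hence there is no suitable $M\in\mathcal{M}_{\mbox{odd}}$ for $G$.
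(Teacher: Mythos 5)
Your reduction is sound and is in fact the same reduction the paper uses: apply Theorem~\ref{Jthm_b} with $N=2$ at $\omega_1=a\omega_0$, $\omega_2=b\omega_0$, rewrite both (\ref{thm1_ineq}) and (\ref{thm1b_ineq}) as statements about $g(\phi)=c_1\cos(\theta_a-a\phi)+c_2\cos(\theta_b-b\phi)$, and pick the weights from stationarity at $\phi=0$; your condition $c_1a\sin\theta_a+c_2b\sin\theta_b=0$ gives exactly the paper's choice (\ref{def_lam}) once one writes $\theta_a=\pi-\phi$, $\theta_b=-\pi+\theta$. Your observation that for $a,b$ both odd the antisymmetry $g(\phi+\pi)=-g(\phi)$ makes the $\mathcal{M}^+$ condition an automatic consequence of the $\mathcal{M}^-$ condition is correct, and is a mild streamlining of the paper, which instead proves Lemma~\ref{lem1b} directly in both parity cases.

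However, there is a genuine gap: the two facts that carry the whole proof are announced as expectations rather than proved. First, you never establish that the phase hypothesis makes $\phi=0$ a \emph{global} (not merely local) minimiser of $g$; this is exactly the paper's Lemma~\ref{lem1a}, whose proof needs a winding/coprimality argument showing the two cosine increments are never simultaneously of the bad sign when $a\theta+b\phi<\pi$, followed by derivative and symmetry estimates on the intervals where one of them is. Your proposed remedy --- a Fej\'er--Riesz certificate for $g(\phi)-g(0)\ge 0$ --- is not a proof strategy: by Fej\'er--Riesz such a certificate exists \emph{if and only if} the inequality holds, so constructing it for general coprime $a,b$ and all admissible $\theta_a,\theta_b$ is precisely the content of the lemma; you exhibit it only for $a=1$, $b=2$. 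Second, for the case where $a$ or $b$ is even you give no derivation of the value $p=1/2$ at all: saying the constraint $\min_\phi g+\max_\phi g\le 0$ ``genuinely bites'' and ``manifests as'' $p=1/2$ does not produce that threshold. In the paper it comes from the number-theoretic step in the proof of Lemma~\ref{lem1b}: simultaneous positivity of the two offending terms forces integers $m,n$ with $2(mb+na)\pi\in\big((a+b)\pi,(a+b+2p)\pi\big)$, which is impossible for $p\le 1/2$ when $a+b$ is odd (and for $p\le 1$ when $a+b$ is even), and this uses coprimality of $a$ and $b$ --- a hypothesis your argument never invokes, even though the parity dichotomy in the statement depends on it (the non-coprime case is then recovered by rescaling $\omega_0$). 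As it stands, the proposal is a correct reduction plus a plan; the analytic core of Theorem~\ref{thm:2b} is missing.
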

\end{subtheorem}


Figs~\ref{test02_fig1} and~\ref{test02_fig2} illustrate Theorems~\ref{thm:2a} and~\ref{thm:2b} respectively for the specific case that $\angle{G( j\omega_a}) > 170^o$ for some frequency $\omega_a$. The results put limitations on the phase of $G$ at frequencies that are rational multiples of $\omega_a$ (i.e. at $b\omega_0$ where $\omega_a=a\omega_0$ and where $a$ and $b$ are coprime integers).

\begin{figure}[htbp]
\begin{center}
\includegraphics[width = 0.9\linewidth]{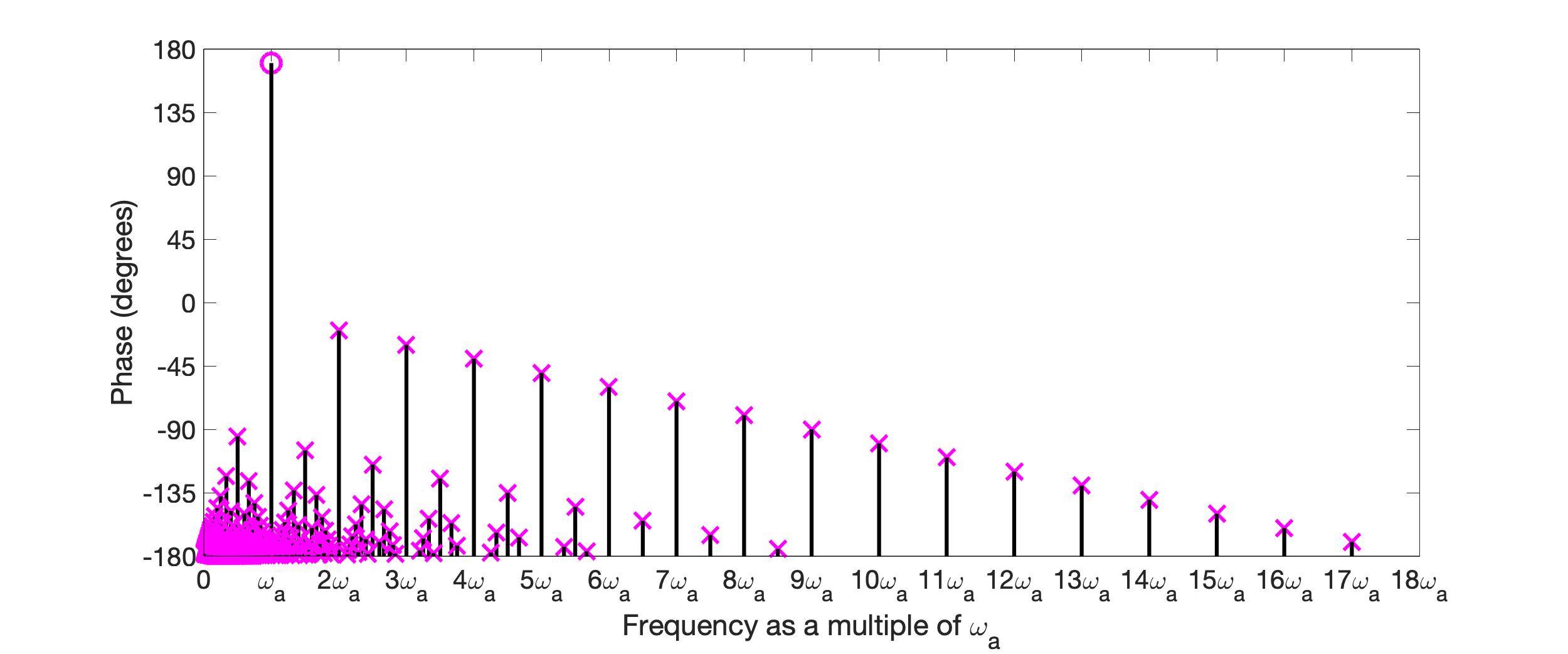}
\caption{Forbidden regions for the phase of $G(j\omega)$ when the phase at some $\omega_a$ is greater than $170^o$. }\label{test02_fig1}
\end{center}
\end{figure}
\begin{figure}[htbp]
\begin{center}
\includegraphics[width = 0.9\linewidth]{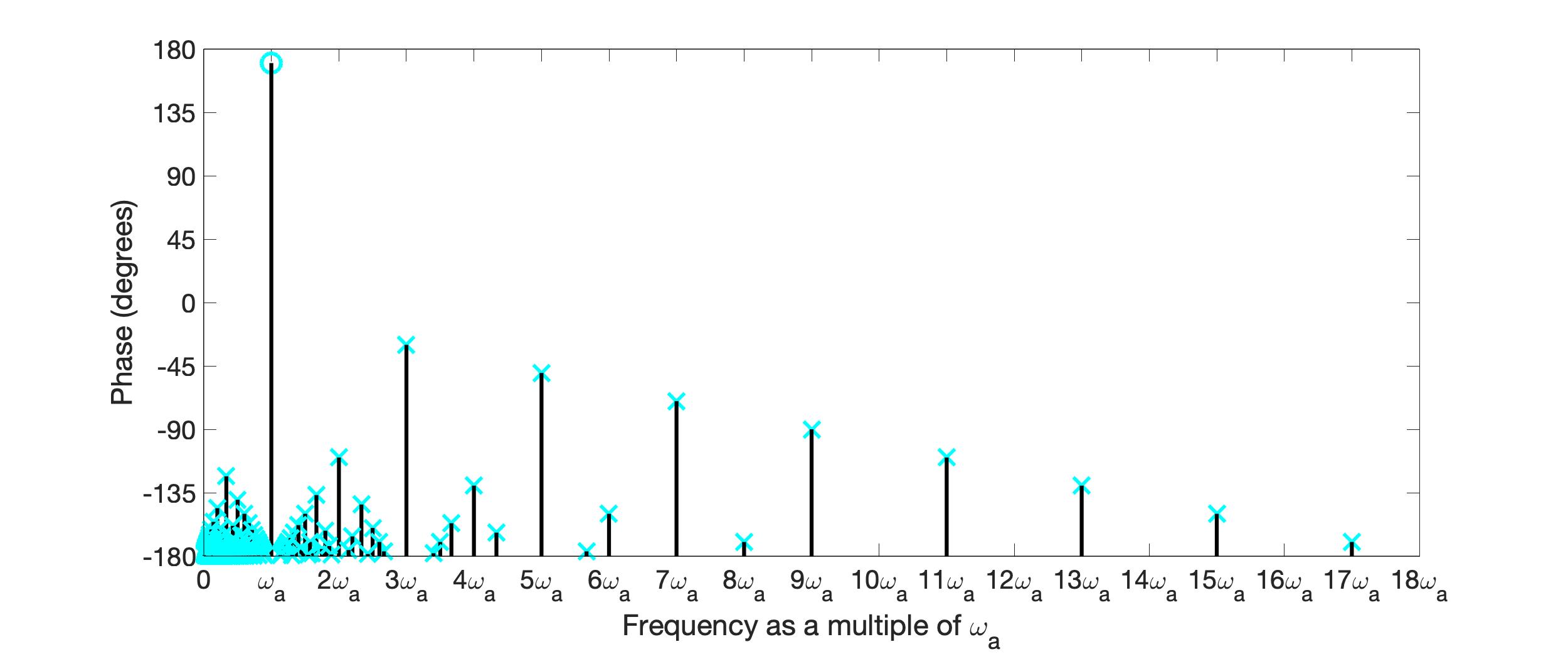}
\caption{Forbidden regions for the phase of $G(j\omega)$ when the phase at some $\omega_a$ is greater than $170^o$ (odd nonlinearity). }\label{test02_fig2}
\end{center}
\end{figure}


The results may also be expressed as phase limitations on the multipliers themselves. Counterparts to Theorems~\ref{thm:2a} and~\ref{thm:2b} follow  as corollaries and are equivalent results.
\begin{subtheorem}{corollary}
\begin{corollary}\label{cor:2a}
Let $a, b \in \mathbb{Z}^+$ and  let $M\in\mathcal{M}$. Then
\begin{equation}\label{cor_ineq}
\left |\frac{b\angle M(aj\omega )-a\angle M(bj\omega )}{a/2+b/2-p}\right |  \leq  180^o,
\end{equation}
for all $\omega\in\mathbb{R}$ with $p=1$.
\end{corollary}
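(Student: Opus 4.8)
The plan is to deduce Corollary~\ref{cor:2a} from Theorem~\ref{thm:2a} by a duality argument that exploits the fact that the defining inequality of a suitable multiplier (Definition~\ref{def1}) is, up to magnitude bounds, purely a phase condition: $\mbox{Re}\{M(j\omega)G(j\omega)\}>\varepsilon$ forces $\angle M(j\omega)+\angle G(j\omega)$ to lie strictly inside $(-90^o,90^o)$ (modulo $360^o$), which is exactly the relationship already used in Remark~\ref{rem_phase}. It is worth recording at the outset that the naive per-frequency bound is not enough: since $M(j\omega)$ lies in the closed disc of radius $m_0$ centred at $m_0$, every $M\in\mathcal{M}$ satisfies $|\angle M(j\omega)|\leq 90^o$, and hence $|b\angle M(aj\omega)-a\angle M(bj\omega)|\leq(a/2+b/2)\,180^o$; the content of the corollary is the sharper threshold $(a/2+b/2-1)\,180^o$, which must come from the correlation between the phase of $M$ at the two harmonically related frequencies $a\omega$ and $b\omega$.

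First I would argue by contradiction. Suppose some $M\in\mathcal{M}$ violates (\ref{cor_ineq}) at a frequency $\omega$, say $b\angle M(aj\omega)-a\angle M(bj\omega)>(a/2+b/2-1)\,180^o$ (the negative and the $\omega<0$ cases following by symmetry). I would then construct a causal, stable, LTI plant $G$ for which $M$ is a suitable multiplier, but whose phases at $a\omega$ and $b\omega$ are pushed to the opposite extremes admissible under suitability, namely $\angle G(aj\omega)=-\angle M(aj\omega)-90^o+\epsilon$ and $\angle G(bj\omega)=-\angle M(bj\omega)+90^o-\epsilon$ for small $\epsilon>0$. Direct substitution then gives
\begin{equation*}
b\angle G(aj\omega)-a\angle G(bj\omega)<-(a+b-1)\,180^o+(a+b)\epsilon,
\end{equation*}
so for $\epsilon$ small enough $G$ satisfies the forbidden condition (\ref{G_ineq}) of Theorem~\ref{thm:2a} with $\omega_0=\omega$. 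Theorem~\ref{thm:2a} then asserts that no suitable $M\in\mathcal{M}$ exists for this $G$, contradicting the construction.

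The bookkeeping that makes this work is the exact accounting of the slack: the two suitability windows each contribute at most $\pm 90^o$, and after scaling by $b$ and $a$ the worst case is $(a+b)\,90^o=(a/2+b/2)\,180^o$, which is precisely the gap between the threshold $(a+b-1)\,180^o$ in (\ref{G_ineq}) and the threshold $(a/2+b/2-1)\,180^o$ in (\ref{cor_ineq}). This matching also yields the converse implication for free, establishing the claimed equivalence: assuming (\ref{cor_ineq}) and a suitable $M$ for a given $G$, the same slack bound forces $|b\angle G(aj\omega_0)-a\angle G(bj\omega_0)|<(a+b-1)\,180^o$ at every $\omega_0$, which is the contrapositive of Theorem~\ref{thm:2a}; this direction needs no construction.

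I expect the main obstacle to be the second step: realising the prescribed two-frequency phase data as an honest causal, stable transfer function $G$ for which $\mbox{Re}\{M(j\omega)G(j\omega)\}>\varepsilon$ holds at \emph{every} $\omega$, not merely at $a\omega$ and $b\omega$. Because $|\angle M(j\omega)|\leq 90^o$ everywhere, the choice $\angle G(j\omega)\approx-\angle M(j\omega)$ keeps the product in the open right half-plane at the remaining frequencies, so there is ample room to interpolate the phase of $G$ away from the two harmonics while respecting the open suitability condition, and finitely many all-pass sections suffice to hit the two target phases. Verifying this interpolation together with a consistent choice of the modulo-$360^o$ branches for the scaled combinations $b\angle(\cdot)-a\angle(\cdot)$ is the delicate part of the argument.
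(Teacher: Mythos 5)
Your overall strategy---deduce the corollary from Theorem~\ref{thm:2a} through the $\pm 90^o$ phase windows of Remark~\ref{rem_phase}---is the paper's intended route, and your bookkeeping of the slack (the $(a+b)\,90^o$ gap between the thresholds in (\ref{G_ineq}) and (\ref{cor_ineq})) is exactly right, as is your observation that the converse direction comes for free. The genuine gap is the step you flag as ``delicate,'' and it is not a removable technicality: using Theorem~\ref{thm:2a} as a black box forces you to produce a causal, stable LTI plant $G$ for which the violating $M$ is suitable in the sense of Definition~\ref{def1}, i.e.\ $\mathrm{Re}\{M(j\omega)G(j\omega)\}>\varepsilon$ at \emph{every} frequency, and you never construct it. Your two heuristics both fail. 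First, $G\approx 1/M$ is not available: OZF multipliers are noncausal in general (the $h$ and $t_i$ in Definition~\ref{def2a} are not restricted to $t\geq 0$), so $1/M$ is not a candidate causal stable plant; moreover the required excursions of $\angle G$ (by nearly $180^o$ between the harmonically related frequencies $a\omega_0$ and $b\omega_0$, while staying inside a moving $180^o$-wide band) are not something ``finitely many all-pass sections'' obviously deliver, since causal all-pass factors can only decrease phase. Second, and worse, the construction is \emph{impossible} for some members of $\mathcal{M}$: any $M$ that vanishes at some frequency---for instance $M^-_\tau(s)=1-e^{-\tau s}$, which vanishes at $\omega=2\pi k/\tau$---is a suitable multiplier for \emph{no} $G$ whatsoever, because $\mathrm{Re}\{M(j\omega)G(j\omega)\}=0$ there. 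Inside a proof by contradiction you cannot exclude that the hypothetical violating $M$ is of this kind, so at minimum you would need an additional perturbation step (e.g.\ replace $M$ by $(1-\mu)M+\mu\in\mathcal{M}$, which preserves the strict violation for small $\mu>0$) that the proposal does not supply.

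The reason the result is ``immediate'' in the paper's framework is that no plant needs to be built at all: the proof of Theorem~\ref{Jthm_a} never uses causality, stability, or even the existence of $G$ as an operator---it is pointwise algebra on the values $G(j\omega_r)$, and its conclusion (\ref{final_thm1a}) is the two-frequency inequality $\lambda_a\,\mathrm{Re}\{M(aj\omega_0)z_a\}+\lambda_b\,\mathrm{Re}\{M(bj\omega_0)z_b\}\leq 0$, valid for every $M\in\mathcal{M}$ once hypothesis (\ref{thm1_ineq}) holds for the pair $(z_a,z_b)$. So, given $M$ violating (\ref{cor_ineq}), say $a\angle M(bj\omega_0)-b\angle M(aj\omega_0)>(a/2+b/2-1)180^o$, take the two complex numbers $z_a=g_ae^{j(\pi-\phi)}$ and $z_b=g_be^{j(-\pi+\theta)}$ of (\ref{G_def}) with $\phi=\pi/2+\angle M(aj\omega_0)+\epsilon$ and $\theta=\pi/2-\angle M(bj\omega_0)+\epsilon$ (in radians): the strict violation gives $a\theta+b\phi<\pi$ for $\epsilon$ small, so Lemma~\ref{lem1a} with the weights (\ref{def_lam}) yields (\ref{thm1_ineq}), hence the displayed inequality; but the choice of $\phi$ and $\theta$ makes both terms equal to a positive multiple of $\sin\epsilon$---a contradiction, with no interpolation anywhere (the opposite sign and $\omega<0$ cases follow by conjugate symmetry). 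I recommend recasting your argument this way: keep your phase bookkeeping, but replace ``construct a plant $G$ and invoke Theorem~\ref{thm:2a}'' by ``invoke Lemma~\ref{lem1a} and the proof of Theorem~\ref{Jthm_a} with two complex values.'' As written, your proof has an unfilled hole at its central step.
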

\begin{corollary}\label{cor:2b}
Let $a, b \in \mathbb{Z}^+$ and  let $M\in\mathcal{M}_{\mbox{odd}}$. Then inequality (\ref{cor_ineq}) holds
 for all $\omega\in\mathbb{R}$ where $p=1$ when both $a$ and $b$ are odd but $p=1/2$ if either $a$ or $b$ are even.
\end{corollary}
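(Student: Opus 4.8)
The plan is to derive Corollary~\ref{cor:2b} from Theorem~\ref{thm:2b}, which I may assume, using the phase reading of suitability in Definition~\ref{def1}. First I would record a geometric fact about $\mathcal{M}_{\mbox{odd}}$: by either the integral or the delay-multiplier characterisation, any $M\in\mathcal{M}_{\mbox{odd}}$ satisfies $|M(j\omega)-m_0|\le\|h\|_1+\sum_i|h_i|\le m_0$, so $M(j\omega)$ lies in the closed disc of radius $m_0$ centred at $m_0$. Every point of this disc except the origin has strictly positive real part (from $x^2+y^2\le 2m_0x$); hence, provided $M(j\omega)\ne0$, we have $\angle M(j\omega)\in(-90^o,90^o)$ on the principal branch, which is continuous in $\omega$. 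This resolves the branch ambiguity in (\ref{cor_ineq}): the combination $b\angle M(aj\omega)-a\angle M(bj\omega)$ is unambiguous and lies in $\bigl(-90^o(a+b),90^o(a+b)\bigr)$, so the naive triangle-inequality bound is $90^o(a+b)$ and the real content of the corollary is the sharper deficit of $180^o p$.

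Next I would argue by contradiction. Suppose (\ref{cor_ineq}) fails for some $M\in\mathcal{M}_{\mbox{odd}}$ at some $\omega$; take the sign with $b\angle M(aj\omega)-a\angle M(bj\omega)<-180^o(a/2+b/2-p)$ (the other sign being symmetric). The idea is to build a causal, stable, LTI plant $G$ for which $M$ is a suitable multiplier, yet whose phases at $a\omega$ and $b\omega$ violate the hypothesis of Theorem~\ref{thm:2b}; since that theorem then asserts no suitable $M\in\mathcal{M}_{\mbox{odd}}$ exists for $G$, this contradicts the suitability of $M$. By Definition~\ref{def1} and Remark~\ref{rem_phase}, $M$ is suitable for $G$ exactly when $|\angle M(j\nu)+\angle G(j\nu)|<90^o$ at every $\nu$. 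I would therefore prescribe $\angle G(aj\omega)$ just inside $90^o-\angle M(aj\omega)$ and $\angle G(bj\omega)$ just inside $-90^o-\angle M(bj\omega)$, pushing the two frequencies to opposite suitability boundaries, and set $\angle G(j\nu)=-\angle M(j\nu)$ elsewhere so that $\mbox{Re}\{M(j\nu)G(j\nu)\}>0$ is maintained. A short computation then gives
\[ b\angle G(aj\omega)-a\angle G(bj\omega)\to 90^o(a+b)-\bigl(b\angle M(aj\omega)-a\angle M(bj\omega)\bigr), \]
which exceeds $180^o(a+b-p)$ by the assumed strict violation, so $G$ meets the hypothesis of Theorem~\ref{thm:2b}.

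The main obstacle is the realisability of $G$. I must exhibit a genuine causal, stable, LTI transfer function that attains, to within an arbitrarily small margin $\eta>0$, the two prescribed phases at $a\omega$ and $b\omega$ while keeping $\mbox{Re}\{M(j\nu)G(j\nu)\}>\varepsilon>0$ uniformly over \emph{all} $\nu\in\mathbb{R}$, not merely at the two frequencies of interest. This needs an interpolation/approximation step (for instance combining all-pass and minimum-phase factors, or finitely many delays, with a phase profile odd in $\nu$), together with a check that the uniform strict inequality survives; because the violation of (\ref{cor_ineq}) is strict there is slack to take $\eta$ small enough that $G$ still strictly violates the Theorem~\ref{thm:2b} bound.

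Finally I would note the self-contained alternative, which proves (\ref{cor_ineq}) directly from the delay-multiplier representation rather than through Theorem~\ref{thm:2b}, by evaluating the phases of the extremal multipliers $M^+_\tau$ and $M^-_\tau$ and invoking density. There the task is to show that the phases of $M$ at the harmonically related frequencies $a\omega$ and $b\omega$ cannot both be driven to their individual extremes $\pm90^o$: forcing $\angle M(aj\omega)\to\pm90^o$ requires $\sum_i h_ie^{-ja\omega t_i}\to m_0$, which pins the mass to delays with $a\omega t_i\equiv0$ or $\pi\pmod{2\pi}$, and the ratio $a:b$ then constrains $b\omega t_i\pmod{2\pi}$. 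It is precisely the parity of $a$ and $b$ (whether $e^{-ja\omega t}$ and $e^{-jb\omega t}$ can be simultaneously real and aligned) that fixes whether the attainable deficit is $90^o$ or $180^o$, i.e. the value of $p$; pinning this number-theoretic accounting down sharply is the crux, and is the principal difficulty on either route.
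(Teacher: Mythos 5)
Your contradiction setup and the phase bookkeeping are sound: writing $\phi' = 90^o + \angle M(aj\omega)$ and $\theta' = 90^o - \angle M(bj\omega)$, a strict violation of (\ref{cor_ineq}) is exactly $b\phi' + a\theta' < p\cdot 180^o$, and the phases you prescribe for $G$ would indeed place it under the hypothesis of Theorem~\ref{thm:2b}. But the crux of your route --- actually exhibiting a causal, stable, LTI $G$ that attains those two phases while $\mbox{Re}\{M(j\nu)G(j\nu)\}>\varepsilon$ uniformly over \emph{all} $\nu$ --- is only announced (``an interpolation/approximation step''), never performed, and it is not a routine check: the phase of a causal stable transfer function cannot be prescribed freely (gain--phase constraints), $\angle M$ need not be uniformly continuous (the delays $t_i$ may be unbounded), and $|G(j\nu)|$ must stay bounded away from zero for the uniform $\varepsilon$. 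Worse, the route is blocked outright for multipliers with zeros on the imaginary axis: $1\pm e^{-\tau s}\in\mathcal{M}_{\mbox{odd}}$ vanish at isolated frequencies, so by Definition~\ref{def1} no $G$ whatsoever admits them as suitable multipliers, and Theorem~\ref{thm:2b} can therefore never produce a contradiction about such $M$. You would first need a preprocessing step, e.g.\ replacing $M$ by $M+\epsilon$ (which remains in $\mathcal{M}_{\mbox{odd}}$ and, for small $\epsilon$, preserves a strict violation); nothing of this kind appears in the proposal.

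The paper's derivation avoids the plant construction entirely, and this is the idea you are missing: in the proof of Theorem~\ref{thm:2b} (Theorem~\ref{Jthm_b} with $N=2$ together with Lemma~\ref{lem1b}), the plant enters only through the two complex numbers $G(aj\omega_0)$ and $G(bj\omega_0)$. So given a violating $M$, one never needs a globally defined plant for which $M$ is suitable: simply take the two numbers $e^{j(180^o-\phi)}$ and $e^{j(-180^o+\theta)}$ with $\phi>\phi'$, $\theta>\theta'$ and $a\theta+b\phi<p\cdot 180^o$. Remark~\ref{rem_phase} converts the assumed violation into positivity of both $\mbox{Re}\{M(aj\omega)e^{j(180^o-\phi)}\}$ and $\mbox{Re}\{M(bj\omega)e^{j(-180^o+\theta)}\}$, while the dual two-frequency inequality (the chain ending in the paper's (\ref{final_thm1a}), valid for every $M\in\mathcal{M}_{\mbox{odd}}$ once Lemma~\ref{lem1b} verifies the delay-multiplier conditions) says a strictly positive combination of these two real parts is $\leq 0$ --- a contradiction. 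That is what the paper means by ``Immediate: see Remark~\ref{rem_phase}.'' Your ``self-contained alternative'' is essentially this machinery (the parity dichotomy fixing $p$ is exactly Lemma~\ref{lem1b}), but the ``number-theoretic accounting'' you defer as the principal difficulty is precisely its content; so on either of your routes the decisive step is absent.
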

\begin{proof}
Immediate: see Remark~\ref{rem_phase}.
\end{proof}
\end{subtheorem}

Figs~\ref{test04_fig1} and~\ref{test04_fig2} are the counterparts to Figs~\ref{test02_fig1} and~\ref{test02_fig2} (if the phase of $G$ is greater than $170^o$ at some $\omega_a$ then any suitable multiplier $M$ must have phase less than $-80^o$ at $\omega_a$).
Corollaries~\ref{cor:2a} and~\ref{cor:2b}  can also be visualised for specific values of $a$ and $b$ with plots of the phase of $M(bj\omega_0 )$ against the phase of  $M(aj\omega_0 )$ as $\omega_0$ varies:  see Figs~\ref{Fig03_1} to~\ref{Fig03_3}. 
Fig~\ref{Fig03_1} also shows boundary points parameterised by $\kappa$ which is associated with the frequency interval apprach and discussed in Section~\ref{Int}.

\begin{figure}[htbp]
\begin{center}
\includegraphics[width = 0.9\linewidth]{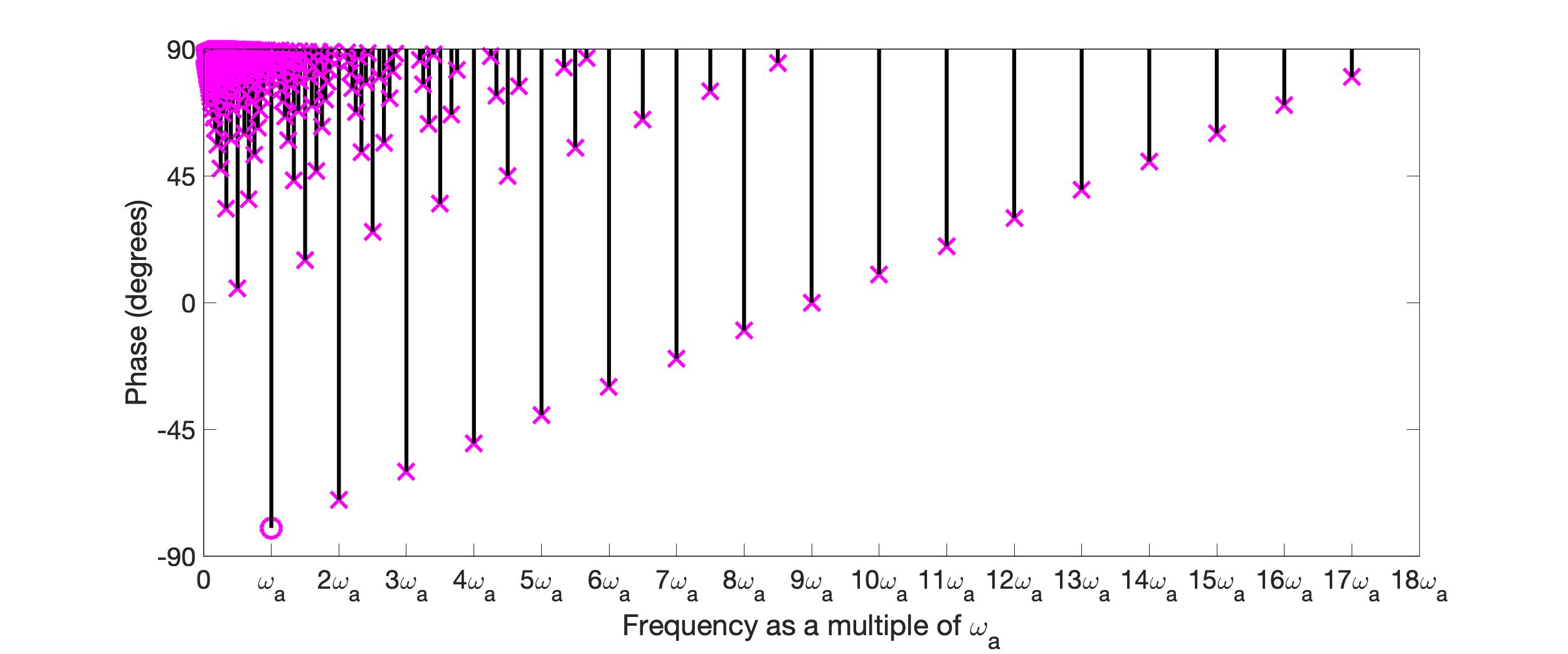}
\caption{Forbidden regions for the phase of $M\in\mathcal{M}$ when the phase at some $\omega_a$ is less than $-80^o$. }\label{test04_fig1}
\end{center}
\end{figure}
\begin{figure}[htbp]
\begin{center}
\includegraphics[width = 0.9\linewidth]{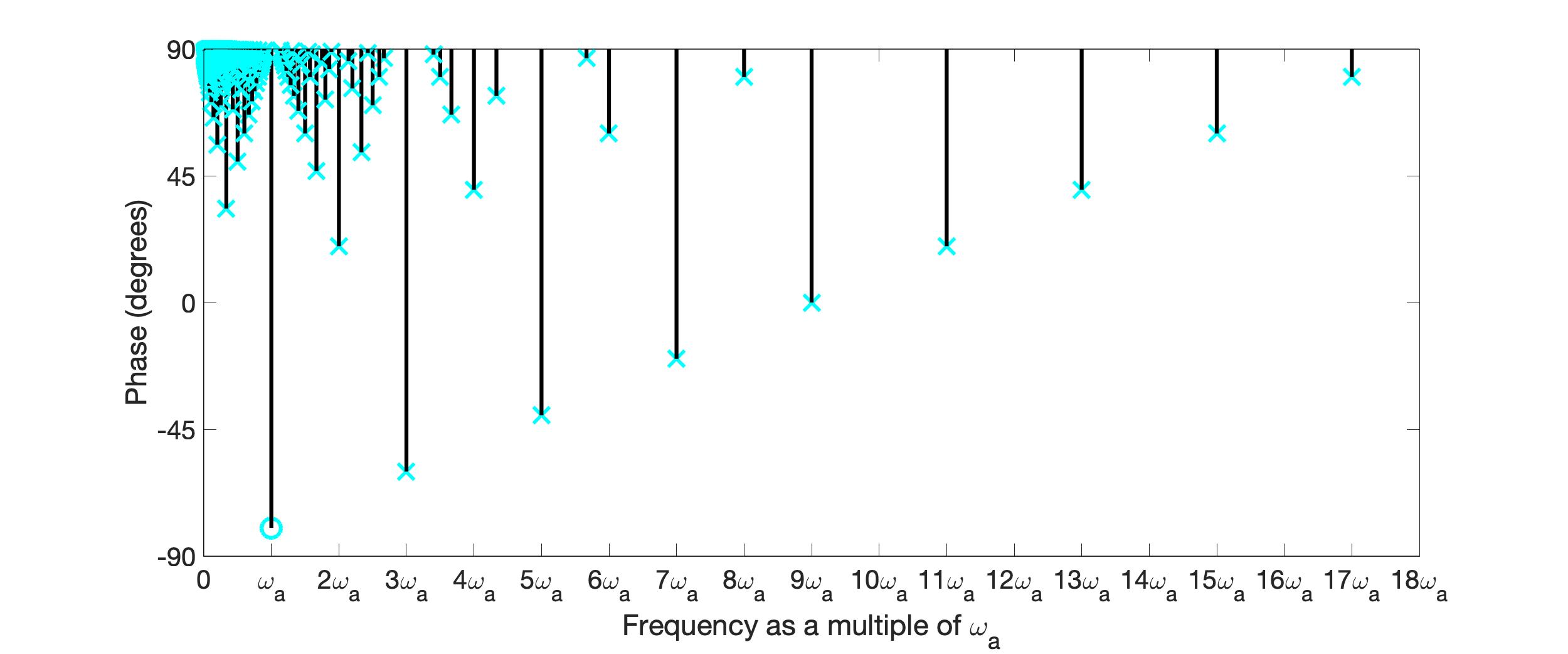}
\caption{Forbidden regions for the phase of $M\in\mathcal{M}_{\mbox{odd}}$ when the phase at some $\omega_a$ is less than $-80^o$. }\label{test04_fig2}
\end{center}
\end{figure}

\begin{figure}[htbp]
\begin{center}
\includegraphics[width = 0.9\linewidth]{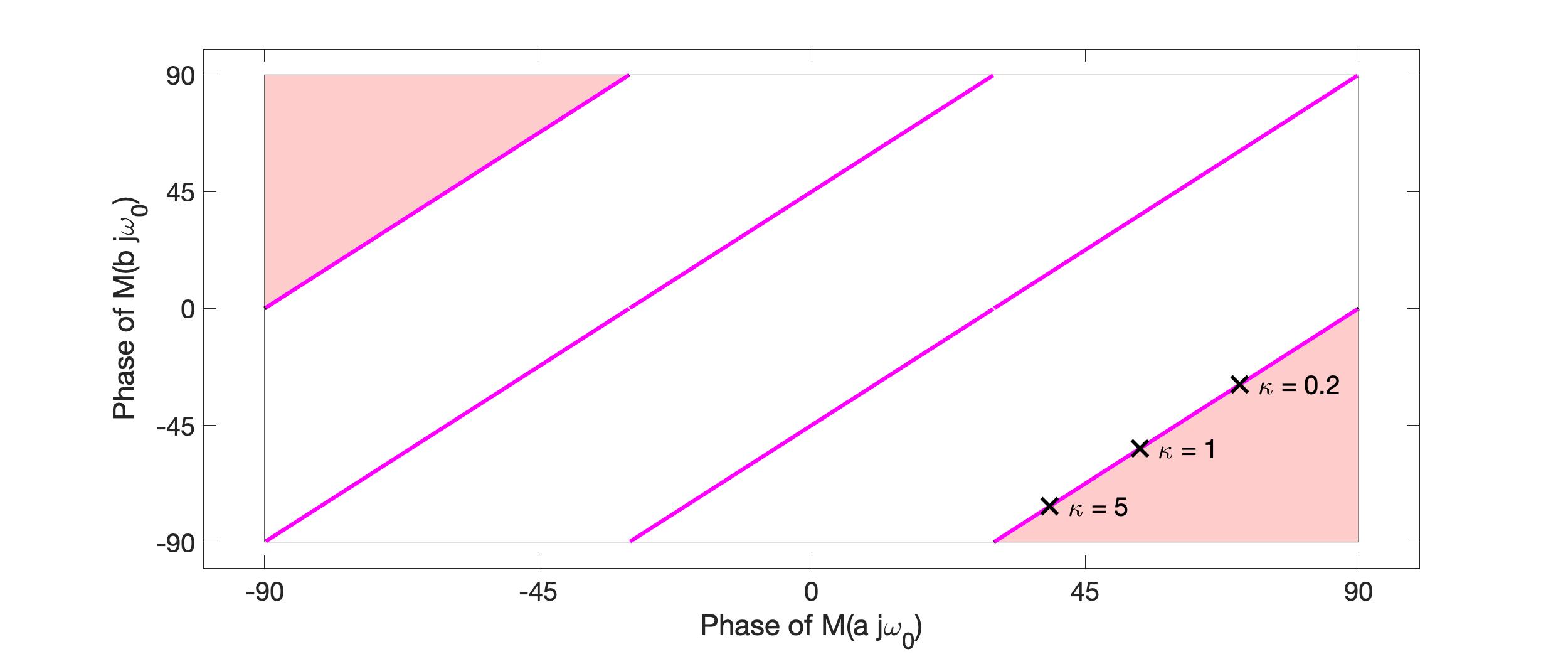}
\caption{Phase vs phase plot illustrating Corollary~\ref{cor:2a} with $a=2$, $b=3$. If $M\in\mathcal{M}$ then the pink regions are forbidden. The phase vs phase plots of elements of $\mathcal{M}^-$ are shown in magenta.
Also shown are the points $(\arctan \rho^c,-\arctan \kappa \rho^c)$ when $a=2$ and $b=3$, when $\kappa$ takes the values $0.2$, $1$ and $5$ and when $\rho^c$ is defined as in Corollary~\ref{m_corollary_a}.
}\label{Fig03_1}
\end{center}
\end{figure}

\begin{figure}[htbp]
\begin{center}
\includegraphics[width = 0.9\linewidth]{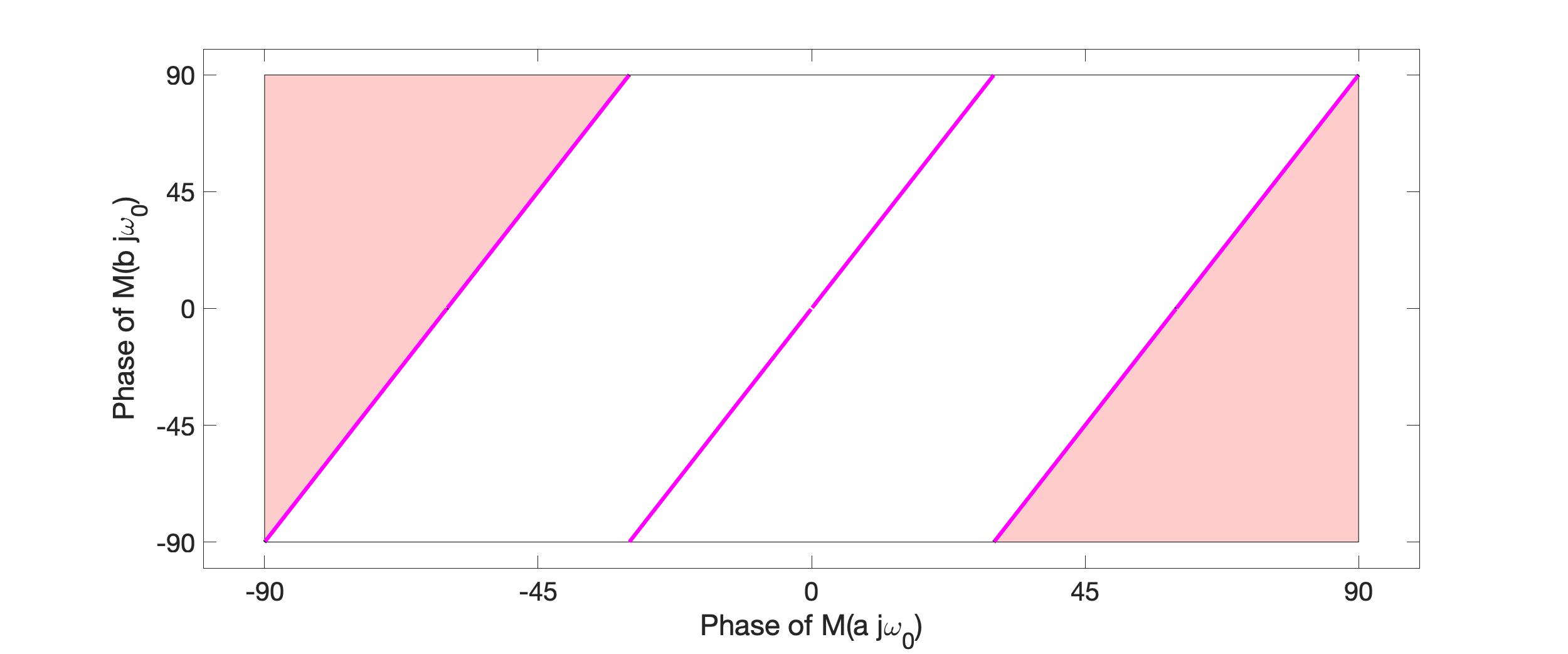}
\caption{Phase vs phase plot illustrating both Corollaries~\ref{cor:2a} and~\ref{cor:2b} with $a=1$, $b=3$. If $M\in\mathcal{M}$ or $M\in\mathcal{M}_{\mbox{odd}}$  then the pink regions are forbidden. The phase vs phase plots of elements of $\mathcal{M}^-$ and $\mathcal{M}^+$ coincide and are shown in magenta.}\label{Fig03_2}
\end{center}
\end{figure}

\begin{figure}[htbp]
\begin{center}
\includegraphics[width = 0.9\linewidth]{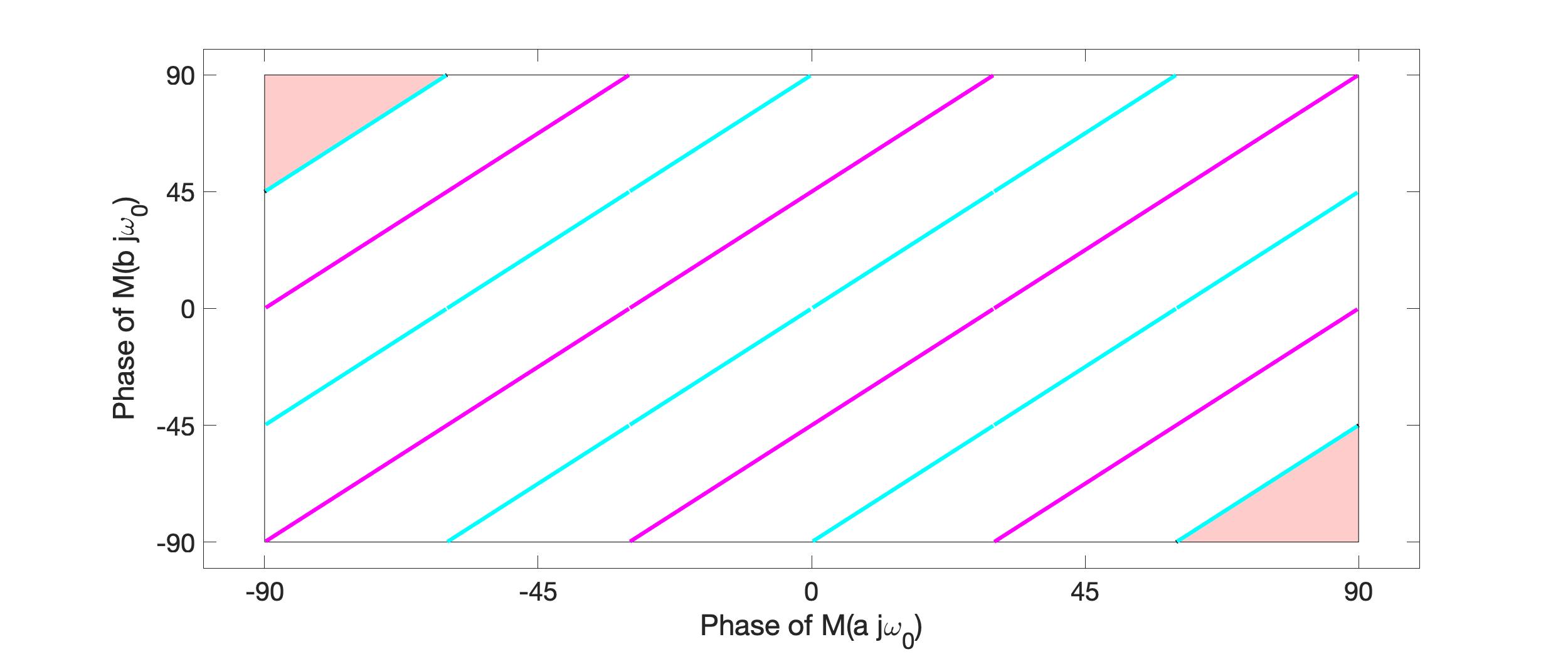}
\caption{Phase vs phase plot illustrating Corollary~\ref{cor:2b} with $a=2$, $b=3$. If $M\in\mathcal{M}_{\mbox{odd}}$  then the pink regions are forbidden. The phase vs phase plots of elements of $\mathcal{M}^-$ are shown in magenta (compare Fig~\ref{Fig03_1}) while the phase vs phase plots of elements of $\mathcal{M}^+$ are shown in cyan.}\label{Fig03_3}
\end{center}
\end{figure}

The bounds are tight in the sense that if $a$ and $b$ are coprime then there exist (many) $M_{\tau}^-\in\mathcal{M}^-$  such that  $b\angle M_{\tau}^-(a j\omega_0 )-a\angle M_{\tau}^-(b j\omega_0) = (a/2+b/2-1)180^o$. Specifically this holds for any $\tau$ that satisfies $[a\tau/\omega_0]_{[0,2\pi]}>2\pi-2\pi/b$ and $[b\tau/\omega_0]_{[0,2\pi]} < 2\pi/a$.
Similarly if $a$ and $b$ are coprime and either $a$ or $b$ are even there exist (many)  $M_{\tau}^+\in\mathcal{M}^+$ such that $b\angle M_{\tau}^+(a j\omega_0)-a\angle M_{\tau}^+(b j\omega_0 ) = (a/2+b/2-1/2)180^o$. Specifically this holds for any $\tau$  that satisfies $\pi-\pi/b <[a\tau/\omega_0]_{[0,2\pi]}<\pi$ and  $\pi<[b\tau/\omega_0]_{[0,2\pi]}<\pi+\pi/a$.




In the examples below the phases of the  objects $G(a j\omega )$ and $G(b j\omega )$  are computed separately. They should each  have phase on the interval $(-180^o, 180^o)$ and so may be easily computed without the possibility of phase wrapping ambiguity at local points or over local regions. Provided the transfer functions are sufficiently smooth they can be computed accurately. Nevertheless, it is possible to write (\ref{G_ineq})  in terms of a single transfer function since
\begin{equation}
	b\angle G(a j\omega )-a\angle G(b j\omega ) = \angle \bar{G}_{a,b}(j\omega)
\end{equation}
where
\begin{equation}
\bar{G}_{a,b}(s) = \frac{G( a s)^b}{G( b s)^a}.
\end{equation}
It thus requires, for given values of $a$ and $b$, the computation of the maximum (or minimum) phase of a single transfer function. In this sense the computational requirement is comparable to that of the off-axis circle criterion \cite{Cho:68}, a classical tool.


It may also be necessary to compute the criterion for several positive integer values of $a$ and $b$. The number of different values is finite and can be bounded. 
Suppose the maximum phase of $G$ is $180^o-\phi_{\min}$ and the minimum phase is $-180^o+\theta_{\max}$, where $\phi_{\min}>0, \theta_{\max}>0$. Then $a \theta_{\max} +b\phi_{\min} < p\times 180^o$. So it is sufficient to choose (say) all $a<p/\theta_{\max} \times 180^o$ and $b<p/\phi_{\min} \times 180^o$ which yields a finite set of values.

\section{Relation to the frequency interval approach}\label{Int}

Corollaries \ref{cor:2a} and \ref{cor:2b} may be interpreted as saying that given an upper (or lower) threshold on the phase of a suitable multiplier $M$ at frequency $a\omega_0$ there is a lower (or upper) threshold on the phase on $M$ at frequency $b\omega$. It is natural to compare this with the frequency interval approach, where an upper (or lower) threshold on the phase of $M$ over an interval $[\alpha,\beta]$ implies a lower (or upper) threshold on the phase of $M$ over the interval $[\gamma,\delta]$.

Let us begin by considering Theorems~\ref{Meg_a} and~\ref{Meg_b}  in the limit as the length of the intervals becomes zero. We obtain the following corollaries. The results requires the ratio of the limiting frequencies to be rational.

\begin{subtheorem}{corollary}
\begin{corollary}\label{m_corollary_a}
{ \everymath={\displaystyle}
For $t>0$, define
\begin{equation}
q_-(t)  = \left \{
			\begin{array}{l}
				 \frac{b\sin (a t)-a\sin (b t)}{ b+\kappa a- b \cos (a  t)-\kappa a\cos(b  t)}
						\mbox{ for }[t]_{[0,\pi]}\neq 0,\\
\\
			0 \mbox{ for } [t]_{[0,\pi]}=0,
			\end{array}
		\right .\\
\end{equation}
where $a$ and $b$ are coprime and $\kappa>0$. }Define also
\begin{equation}
\overline{\rho}^c  = \sup_{t>0}  |q_-(t)|.
\end{equation}
Let $M$ be an OZF multiplier and suppose
\begin{equation}
\mbox{Im}(M(aj\omega_0)>\rho\mbox{Re}(M(aj\omega_0)),
\end{equation}
and
\begin{equation}
\mbox{Im}(M(bj\omega_0)<-\kappa\rho\mbox{Re}(M(bj\omega_0)),
\end{equation}
for some $\omega_0>0$ and  $\rho>0$. Then $\rho<\rho^c$ if $M\in\mathcal{M}$.

\end{corollary}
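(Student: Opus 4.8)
The plan is to derive Corollary~\ref{m_corollary_a} as the degenerate limit of Theorem~\ref{Meg_a} in which the intervals $[\alpha,\beta]$ and $[\gamma,\delta]$ collapse onto the single frequencies $a\omega_0$ and $b\omega_0$. I take $a<b$ without loss of generality, so that the ordering $\alpha<\beta<\gamma<\delta$ places the ``upper'' phase condition at $a\omega_0$ and the ``lower'' one at $b\omega_0$ as in the primary form of Theorem~\ref{Meg_a}; the case $a>b$ uses its alternative orientation. The first task is to replace the pointwise hypotheses by the interval hypotheses that Theorem~\ref{Meg_a} requires. Because $M$ has a continuous frequency response and the two inequalities $\mathrm{Im}(M(aj\omega_0))>\rho\,\mathrm{Re}(M(aj\omega_0))$ and $\mathrm{Im}(M(bj\omega_0))<-\kappa\rho\,\mathrm{Re}(M(bj\omega_0))$ are strict, I can fix some $\hat\rho>\rho$, close enough to $\rho$, for which both strict inequalities still hold at the two frequencies, and then use continuity to produce neighbourhoods $[\alpha_0,\beta_0]\ni a\omega_0$ and $[\gamma_0,\delta_0]\ni b\omega_0$ on which $\mathrm{Im}(M)>\hat\rho\,\mathrm{Re}(M)$ and $\mathrm{Im}(M)<-\kappa\hat\rho\,\mathrm{Re}(M)$ hold throughout. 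Every sub-interval inherits these inequalities, so Theorem~\ref{Meg_a} delivers $\hat\rho<\rho^c(\alpha,\beta,\gamma,\delta)$ for a whole family of intervals shrinking to the two points.

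The core step is to compute $\lim\rho^c$ along this family and identify it with $\overline{\rho}^c$. Parametrising the endpoints by midpoints $\omega_a=(\alpha+\beta)/2\to a\omega_0$, $\omega_b=(\gamma+\delta)/2\to b\omega_0$ and widths $\Delta_1=\beta-\alpha$, $\Delta_2=\delta-\gamma$, the constraint $\lambda/\mu=(\delta^2-\gamma^2)/(\beta^2-\alpha^2)$ becomes $\lambda\Delta_1\omega_a=\mu\Delta_2\omega_b$, which I impose by writing $\lambda\Delta_1=C/\omega_a$ and $\mu\Delta_2=C/\omega_b$ for an arbitrary $C>0$. The product-to-sum identities $\cos(\alpha t)-\cos(\beta t)=2\sin(\omega_a t)\sin(\Delta_1 t/2)$ and $\sin(\alpha t)-\sin(\beta t)=-2\cos(\omega_a t)\sin(\Delta_1 t/2)$, together with their counterparts on $[\gamma,\delta]$, recast the data of Theorem~\ref{Meg_a} as
\begin{align*}
\psi(t)&=\lambda\Delta_1\sin(\omega_a t)\,\sigma_1(t)-\mu\Delta_2\sin(\omega_b t)\,\sigma_2(t),\\
\phi(t)&=\lambda\Delta_1\bigl[1-\cos(\omega_a t)\,\sigma_1(t)\bigr]+\kappa\mu\Delta_2\bigl[1-\cos(\omega_b t)\,\sigma_2(t)\bigr],
\end{align*}
where $\sigma_i(t)=\sin(\Delta_i t/2)/(\Delta_i t/2)$. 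Substituting the chosen values of $\lambda\Delta_1,\mu\Delta_2$, cancelling the common factor $C/(\omega_a\omega_b)$, and letting $\Delta_1,\Delta_2\to0$ (so $\sigma_1,\sigma_2\to1$) gives, for each fixed $t$ and after the change of variable $s=\omega_0 t$, precisely the ratio $q_-(s)$ defining $\overline{\rho}^c$; a short Taylor expansion confirms that the removable indeterminacy at $[s]_{[0,\pi]}=0$ matches the assigned value $0$.

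The hard part will be upgrading this pointwise convergence of the integrand to convergence of the supremum, $\rho^c\to\overline{\rho}^c$. The bound $\liminf\rho^c\geq\overline{\rho}^c$ is routine, since evaluating at a $t^\ast$ near a maximiser of $|q_-|$ (which never lies at a zero of the denominator) forces $|\psi(t^\ast)|/\phi(t^\ast)>\overline{\rho}^c-\eta$ once the widths are small. The delicate direction is $\limsup\rho^c\leq\overline{\rho}^c$, which demands an upper bound on $|\psi(t)|/\phi(t)$ uniform in $t>0$. On compact $t$-sets the factors $\sigma_i$ tend to $1$ uniformly and the ratio converges uniformly, so the real obstacle is the beat band $t\sim 1/\Delta_i$: there the $\sigma_i$ are of order one while $\cos(\omega_a t),\sin(\omega_a t),\cos(\omega_b t),\sin(\omega_b t)$ oscillate rapidly and essentially independently of them, and one must show that no such configuration pushes the ratio above $\overline{\rho}^c$. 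I expect this to be the heart of the argument. A cleaner route that sidesteps it is to mimic the construction underlying Theorem~\ref{Meg_a} directly with point masses at $a\omega_0$ and $b\omega_0$ in place of measures spread over the two intervals; the interval integrals then collapse to point evaluations and reproduce $q_-$ with no supremum to interchange. Either way, once $\lim\rho^c=\overline{\rho}^c$ (or the direct point-mass bound) is in hand, the strict inequalities $\hat\rho<\rho^c$ for the shrinking family pass to the limit as $\hat\rho\leq\overline{\rho}^c$, whence $\rho<\hat\rho\leq\overline{\rho}^c$, which is the assertion (the symbol $\rho^c$ in the conclusion being read as $\overline{\rho}^c$).
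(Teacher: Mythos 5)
Your route is exactly the paper's route: the paper's own proof specialises Theorem~\ref{Meg_a} to the symmetric intervals $[a\omega_0-\varepsilon,a\omega_0+\varepsilon]$ and $[b\omega_0-\varepsilon,b\omega_0+\varepsilon]$ (so that the weight constraint collapses to $a\lambda=b\mu$), applies the same product-to-sum identities you use, and then simply asserts $\overline{\rho}^c=\lim_{\varepsilon\rightarrow 0}\rho^c$. On the preliminary points you are in fact more careful than the paper, which never discusses passing from the pointwise strict inequalities to the interval hypotheses (your continuity argument) or the preservation of strictness (your $\hat\rho$ device); and your reading of the conclusion's $\rho^c$ as $\overline{\rho}^c$ is the intended one. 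The step you leave open --- that the supremum defining $\rho^c$ does not exceed $\overline{\rho}^c$ in the limit --- is precisely the step the paper passes over in silence, so you have not missed anything the paper contains; but since you flag it as the heart of the matter and do not close it, it is the one genuine gap in your write-up. It closes with a short case analysis, and no ``beat-band'' estimate is needed.

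With $(\lambda,\mu)\propto(b,a)$ write $N(s)=b\sin(as)-a\sin(bs)$, $D_0=b+\kappa a$, $D_1(s)=b\cos(as)+\kappa a\cos(bs)$, so that $|D_1(s)|\leq D_0$, $q_-(s)=N(s)/(D_0-D_1(s))$, and, with $\sigma=\sin(\varepsilon t)/(\varepsilon t)$,
\begin{equation*}
\frac{|\psi(t)|}{\phi(t)}=\frac{|N(\omega_0 t)|\,|\sigma|}{D_0-\sigma D_1(\omega_0 t)}.
\end{equation*}
If $\sigma\in[0,1]$, the right-hand side is nondecreasing in $\sigma$ (its $\sigma$-derivative is $|N|D_0/(D_0-\sigma D_1)^2\geq 0$), hence bounded by its value at $\sigma=1$, namely $|q_-(\omega_0 t)|\leq\overline{\rho}^c$. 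If $\sigma<0$ then necessarily $\varepsilon t>\pi$, so $|\sigma|\leq 1/\pi$, and using $D_1\geq -D_0$ together with $\overline{\rho}^c\geq |N(\omega_0 t)|/(D_0-D_1(\omega_0 t))\geq |N(\omega_0 t)|/(2D_0)$,
\begin{equation*}
\frac{|N|\,|\sigma|}{D_0+|\sigma| D_1}\leq\frac{|N|/\pi}{D_0(1-1/\pi)}=\frac{2}{\pi-1}\cdot\frac{|N|}{2D_0}\leq\frac{2}{\pi-1}\,\overline{\rho}^c<\overline{\rho}^c,
\end{equation*}
since $\pi>3$. (When $D_0-D_1(\omega_0 t)=0$, coprimality of $a$ and $b$ forces $N(\omega_0 t)=0$ and both ratios vanish, so no division problem arises.) Thus $\rho^c(\varepsilon)\leq\overline{\rho}^c$ for \emph{every} $\varepsilon>0$, not merely in the limit, which also streamlines your endgame: fix a single $\varepsilon$ small enough that the interval hypotheses hold with $\rho$ itself; Theorem~\ref{Meg_a} then gives the strict inequality $\rho<\rho^c(\varepsilon)\leq\overline{\rho}^c$, so neither the $\hat\rho$ device nor your $\liminf$ direction is actually required.
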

\begin{corollary}\label{m_corollary_b}
{ \everymath={\displaystyle}
In addition to the conditions of Corollary~\ref{m_corollary_a}, define
\begin{equation}
q_+(t)  = \left \{
			\begin{array}{l}
 \frac{b\sin (a t)-a\sin (b t)}{ b+\kappa a+b \cos (a  t)+\kappa a\cos(b  t)}
						\mbox{ for }[t]_{[0,\pi]}\neq 0,\\
\\
			0 \mbox{ for } [t]_{[0,\pi]}=0,
			\end{array}
		\right .
\end{equation}
and
\begin{equation}
\overline{\rho}^c_{\mbox{odd}}  = \max\left (\sup_{t>0}|q_-(t) |,\sup_{t>0}|q_+(t)|\right ) .
\end{equation}
Then $\rho<\overline{\rho}^c$ if $M\in\mathcal{M}_{\mbox{odd}}$.
}
\end{corollary}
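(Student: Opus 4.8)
The plan is to obtain Corollary~\ref{m_corollary_b} as the zero-length limit of the odd interval bound of Theorem~\ref{Meg_b}, reusing the interval parametrisation already needed for Corollary~\ref{m_corollary_a}. First I would centre the two intervals on the target frequencies, writing $\alpha,\beta=a\omega_0\mp\eta_1$ and $\gamma,\delta=b\omega_0\mp\eta_2$ with half-widths $\eta_1,\eta_2\to0$ (taking $a<b$ without loss of generality, and otherwise invoking the variant of Theorem~\ref{Meg_a} that swaps the two intervals). Because $\psi$, $\phi_1$, $\phi$ and $\tilde\phi$ are each homogeneous of degree one in $(\lambda,\mu)$, the ratios $|\psi|/\phi$ and $|\psi|/\tilde\phi$ are invariant under a common rescaling of $(\lambda,\mu)$, so I may normalise by $2\lambda\eta_1=b$ and $2\mu\eta_2=a$. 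The constraint $\lambda/\mu=(\delta^2-\gamma^2)/(\beta^2-\alpha^2)$ is then met in the limit, since $\beta^2-\alpha^2=2\eta_1(\alpha+\beta)\to4a\omega_0\eta_1$ and $\delta^2-\gamma^2\to4b\omega_0\eta_2$.

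The key step is the pointwise limit. Using the identities $\cos(\alpha t)-\cos(\beta t)=2\sin\!\big(\tfrac{(\alpha+\beta)t}{2}\big)\sin(\eta_1 t)$ and $\sin(\alpha t)-\sin(\beta t)=-2\cos\!\big(\tfrac{(\alpha+\beta)t}{2}\big)\sin(\eta_1 t)$ together with $\sin(\eta_i t)/(\eta_i t)\to1$, I obtain for each fixed $t>0$ the limits $\psi(t)\to b\sin(a\omega_0 t)-a\sin(b\omega_0 t)$ and $\phi_1(t)\to-\big(b\cos(a\omega_0 t)+\kappa a\cos(b\omega_0 t)\big)$. Writing $N=b\sin(a\omega_0 t)-a\sin(b\omega_0 t)$ and $Y=b\cos(a\omega_0 t)+\kappa a\cos(b\omega_0 t)$, the ordinary case gives $|\psi|/\phi\to|N|/(b+\kappa a-Y)=|q_-(\omega_0 t)|$, which is the content of Corollary~\ref{m_corollary_a} and may be assumed. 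The new ingredient for the odd case is the modulus in $\tilde\phi=\lambda(\beta-\alpha)+\kappa\mu(\delta-\gamma)-|\phi_1|$, whose limit is $(b+\kappa a)-|Y|=\min(b+\kappa a-Y,\,b+\kappa a+Y)$. Hence
\[
\frac{|\psi(t)|}{\tilde\phi(t)}\to\max\!\left(\frac{|N|}{b+\kappa a-Y},\frac{|N|}{b+\kappa a+Y}\right)=\max\big(|q_-(\omega_0 t)|,|q_+(\omega_0 t)|\big).
\]
Taking the supremum over $t>0$ and using $\sup\max=\max\sup$ together with the scaling $t\mapsto\omega_0 t$ then gives $\rho^c_{\mbox{odd}}\to\max(\sup_{t>0}|q_-|,\sup_{t>0}|q_+|)=\overline{\rho}^c_{\mbox{odd}}$.

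It remains to transfer the hypotheses. The frequency response of an OZF multiplier is continuous, so the strict pointwise inequalities $\mbox{Im}(M(aj\omega_0))>\rho\mbox{Re}(M(aj\omega_0))$ and $\mbox{Im}(M(bj\omega_0))<-\kappa\rho\mbox{Re}(M(bj\omega_0))$ persist on neighbourhoods of $a\omega_0$ and $b\omega_0$; thus (\ref{M_up}) and (\ref{M_dn}) hold on $[\alpha,\beta]$ and $[\gamma,\delta]$ once $\eta_1,\eta_2$ are small enough. Working with these raw inequalities rather than with phases avoids any ambiguity in the sign of $\mbox{Re}(M)$. For such a fixed admissible interval Theorem~\ref{Meg_b} yields the strict bound $\rho<\rho^c_{\mbox{odd}}$, and combining this with the limit identified above gives the claimed $\rho<\overline{\rho}^c_{\mbox{odd}}$ for $M\in\mathcal{M}_{\mbox{odd}}$.

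The step I expect to be the main obstacle is the interchange of $\sup_{t>0}$ with the interval limit. The convergence above is only pointwise: the factors $\sin(\eta_i t)/(\eta_i t)$ decay merely like $1/(\eta_i t)$ and oscillate, so to pass the supremum through I would establish a tail estimate showing that, uniformly in the interval width, the supremum is attained on a bounded range of $t$ (the numerator inherits a $1/t$ decay while the constant part of $\tilde\phi$ stays bounded away from zero off the degenerate set). The second delicate point is the set $[t]_{[0,\pi]}=0$, where $|Y|=b+\kappa a$ and both $N$ and $\tilde\phi$ vanish in the limit, so $q_\pm$ is a removable $0/0$ that must be assigned the value $0$ exactly as in the definitions; here I would check that the pre-limit ratio stays bounded near these isolated points so that they do not affect the supremum, and that the strict inequality from Theorem~\ref{Meg_b} is not lost in the passage to the limit.
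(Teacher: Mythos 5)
Your proposal is correct and takes essentially the same route as the paper: its own proof of Corollary~\ref{m_corollary_b} likewise shrinks the two intervals of Theorem~\ref{Meg_b} around $a\omega_0$ and $b\omega_0$ (with equal half-widths $\varepsilon$ and the normalisation $a\lambda=b\mu$ forced by the ratio constraint), arrives at exactly your limits for $\psi$, $\phi_1$ and $\tilde{\phi}$, and concludes $\overline{\rho}^c_{\mbox{odd}}=\lim_{\varepsilon\rightarrow 0}\rho^c_{\mbox{odd}}$. The analytic caveats you flag --- interchanging $\sup_{t>0}$ with the limit, transferring the pointwise strict hypotheses to small intervals, and not losing strictness in the limit --- are not discussed in the paper at all, so on those points your write-up is more careful than the published proof.
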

\end{subtheorem}

\begin{remark}
Equivalently, we can say if $\angle M(aj\omega_0)>\arctan \rho$ and  $\angle M(b j \omega_0)<-\arctan \kappa \rho$ then $\rho <\rho^c$ if $M\in\mathcal{M}$ and $\rho <\rho^c_{\mbox{odd}}$ if $M\in\mathcal{M}_{\mbox{odd}}$.
\end{remark}

It turns out that this is equivalent to the phase condition derived via the duality approach. The inequality boundaries  $\angle M(aj\omega_0)=\arctan \rho^c$ and $\angle M(b j \omega_0)=-\arctan \kappa \rho^c)$ (or $\angle M(aj\omega_0)=\arctan \rho^c_{\mbox{odd}}$ and $\angle M(b j\omega_0)=-\arctan \kappa \rho^c_{\mbox{odd}}$) are the same as those for Corollary~\ref{cor:2a} (or~\ref{cor:2b}), as illustrated in Fig~\ref{Fig03_1}.
Specifically we may say:
\begin{subtheorem}{theorem}
\begin{theorem}\label{Meg_equiv_a}
Corollary~\ref{m_corollary_a} and Theorem~\ref{thm:2a} are equivalent results. 
\end{theorem}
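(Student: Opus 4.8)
The plan is to show that the two results describe identical boundaries in the phase-vs-phase plane, so that each forbidden region coincides. The key link is that both results concern a pair of frequencies $a\omega_0$ and $b\omega_0$ with coprime integers $a,b$, and both put a joint limitation on $\angle M(aj\omega_0)$ and $\angle M(bj\omega_0)$. Theorem~\ref{thm:2a} states (via Corollary~\ref{cor:2a}) that $|b\angle M(aj\omega_0)-a\angle M(bj\omega_0)|\leq(a/2+b/2-1)180^o$, a single linear inequality in the two phases. Corollary~\ref{m_corollary_a} states that if $\angle M(aj\omega_0)>\arctan\rho$ and $\angle M(bj\omega_0)<-\arctan\kappa\rho$ then $\rho<\overline{\rho}^c$, where $\overline{\rho}^c=\sup_{t>0}|q_-(t)|$. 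The plan is to show these carve out the same half-plane, by proving that the extremal boundary points $(\arctan\rho^c,-\arctan\kappa\rho^c)$ traced out as $\kappa$ ranges over $(0,\infty)$ lie exactly on the line $b\,\theta_a-a\,\theta_b=(a/2+b/2-1)180^o$.

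First I would reparameterise. In Corollary~\ref{m_corollary_a} the function $q_-(t)$ is the phase-ratio expression associated with the delay multiplier $M^-_\tau$ evaluated at the two frequencies, with $t$ playing the role of $\tau\omega_0$ (up to scaling). Indeed, for a delay multiplier $M^-_\tau(s)=1-e^{-\tau s}$ one has $\mathrm{Re}\,M^-_\tau(j\nu)=1-\cos(\tau\nu)$ and $\mathrm{Im}\,M^-_\tau(j\nu)=\sin(\tau\nu)$, so $\tan\angle M^-_\tau(j\nu)=\sin(\tau\nu)/(1-\cos(\tau\nu))$. I would verify that $q_-(t)$ is precisely the ratio $\mathrm{Im}/\mathrm{Re}$ of the convex-combination weighting that appears when one evaluates the duality inequality (\ref{thm1_ineq}) of Theorem~\ref{Jthm_a} with $N=2$, $\omega_1=a\omega_0$, $\omega_2=b\omega_0$, and weights $\lambda,\kappa\mu$ chosen as in the $\kappa$-parameterisation. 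The supremum over $t$ in $\overline{\rho}^c$ then corresponds exactly to the worst-case delay multiplier in the duality test.

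Next I would compute the supremum explicitly, or at least characterise the maximiser. Because $a$ and $b$ are coprime, as $t$ varies the pair $(at\bmod 2\pi,\ bt\bmod 2\pi)$ is dense in the torus, so the sup of $|q_-(t)|$ is attained (in the limiting/closure sense) at the corner where $\cos(at)\to1^-$ and $\cos(bt)\to1^-$ simultaneously but with the phases of the two delay-multiplier factors approaching their extreme values $\pm\pi$. This is exactly the tightness condition already stated in the excerpt: $[a\tau/\omega_0]_{[0,2\pi]}>2\pi-2\pi/b$ and $[b\tau/\omega_0]_{[0,2\pi]}<2\pi/a$. At such points $\angle M^-_\tau(aj\omega_0)\to\arctan\rho^c$ and $\angle M^-_\tau(bj\omega_0)\to-\arctan\kappa\rho^c$, and I would substitute into $b\theta_a-a\theta_b$ to confirm it equals $(a/2+b/2-1)180^o$. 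This shows the boundary of the Corollary~\ref{m_corollary_a} forbidden region sits on the Corollary~\ref{cor:2a} boundary line for every $\kappa$, and sweeping $\kappa\in(0,\infty)$ sweeps the whole line segment, giving equality of the two regions.

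The hard part will be the passage to the limit of zero interval length, i.e. justifying that Corollary~\ref{m_corollary_a} really is the $\beta\to\alpha$, $\delta\to\gamma$ limit of Theorem~\ref{Meg_a}, and that the supremum/threshold behaves continuously under this limit so that no extremal configuration is lost. In Theorem~\ref{Meg_a} the functions $\psi,\phi,\phi_1$ carry factors like $(\cos(\alpha t)-\cos(\beta t))/t$ which vanish as $\beta\to\alpha$; one must take the derivative (first-order Taylor) limit carefully so that $\psi(t)/\phi(t)\to q_-(t)$ after cancelling the common interval-length factor, and check that the constraint $\lambda/\mu=(\delta^2-\gamma^2)/(\beta^2-\alpha^2)\to(\gamma/\alpha)=(b/a)$ is consistent with the weighting hidden in $q_-$. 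Establishing this limit rigorously — and confirming the sup over $t$ commutes with the limit — is the main technical obstacle; the rest is the algebraic identity $b\arctan\rho^c+a\arctan\kappa\rho^c=(a/2+b/2-1)180^o$ evaluated at the extremal $t$, which is routine once the maximiser is identified.
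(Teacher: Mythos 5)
Your overall target is the right one: the equivalence reduces to showing that, for every $\kappa>0$, the corner point $(\arctan\overline{\rho}^c,-\arctan\kappa\overline{\rho}^c)$ lies exactly on the line $b\theta_a-a\theta_b=(a/2+b/2-1)180^o$, i.e.\ that $b\arctan\overline{\rho}^c+a\arctan\kappa\overline{\rho}^c=(a/2+b/2-1)180^o$, and this is indeed how the paper proceeds (it studies $r_-(t)=b\arctan q_-(t)+a\arctan\kappa q_-(t)$ and shows its extreme values are exactly $\pm(a+b-2)\frac{\pi}{2}$). But your argument for locating $\sup_{t>0}|q_-(t)|$ does not work. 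The density claim is false: since $a$ and $b$ are integers, the curve $t\mapsto([at]_{[0,2\pi]},[bt]_{[0,2\pi]})$ is a \emph{closed} curve on the torus, not dense --- density would require an irrational frequency ratio, which is exactly what this framework excludes. Your picture of the maximiser is also off: on the set where the delay-multiplier bound is tight one has $[at]_{[0,2\pi]}=2\pi-\epsilon_1$ and $[bt]_{[0,2\pi]}=\epsilon_2$ with $b\epsilon_1+a\epsilon_2=2\pi$, so the two cosines cannot both approach $1$ there; and near the genuine corner $t\equiv 0$, where they do both approach $1$, a Taylor expansion gives $q_-(t)\approx (b^2-a^2)t/(3(a+\kappa b))\to 0$, a zero of $q_-$ rather than a maximum.

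The fatal gap is what you leave out. Even granting that the delay-multiplier configurations give $|r_-|=(a+b-2)\frac{\pi}{2}$ (this is the easy half, the paper's Case~1: at critical points with $m_-(t)=0$ one gets $q_-(t)=\cot\frac{at}{2}$ and $\kappa q_-(t)=-\cot\frac{bt}{2}$, so $r_-$ equals the delay-multiplier phase combination), you never rule out that $\sup_t|q_-(t)|$ is attained at some \emph{other} critical point with a larger value. The function $q_-$ has a second family of critical points ($n_-(t)=0$ in the paper's notation) that do not correspond to delay-multiplier phases, and bounding $|r_-|$ there is the bulk of the paper's proof: one shows the associated extreme value $\bar{r}^\dagger(\kappa)$, built from $\bar{q}^\dagger=(b^2-a^2)/(2\sqrt{ab(a+\kappa b)(b+\kappa a)})$, is monotone in $\kappa$, computes its limits $b\arctan\left(\frac{b^2-a^2}{2ab}\right)$ and $a\arctan\left(\frac{b^2-a^2}{2ab}\right)$ at $\kappa=0$ and $\kappa\rightarrow\infty$, and verifies $\max(a,b)\arctan\left|\frac{b^2-a^2}{2ab}\right|\leq(a+b-2)\frac{\pi}{2}$. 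Without this step the corner point could lie strictly inside the forbidden region of Theorem~\ref{thm:2a}, making Corollary~\ref{m_corollary_a} strictly stronger rather than equivalent. (A smaller structural point: the $\varepsilon\to 0$ limit you identify as the main technical obstacle is the proof of Corollary~\ref{m_corollary_a} itself, which the paper establishes separately; it is not part of the equivalence argument.)
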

\begin{theorem}\label{Meg_equiv_b}
Corollary~\ref{m_corollary_b} and Theorem~\ref{thm:2b} are equivalent results.
\end{theorem}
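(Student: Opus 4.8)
The plan is to prove Theorems~\ref{Meg_equiv_a} and~\ref{Meg_equiv_b} by showing that the critical phase thresholds produced by the two approaches coincide; since both results concern the existence of a suitable multiplier subject to phase constraints at the two frequencies $a\omega_0$ and $b\omega_0$, it suffices to show that the \emph{boundaries} of the forbidden phase regions are identical. Concretely, I would show that the supremum $\overline{\rho}^c = \sup_{t>0}|q_-(t)|$ appearing in Corollary~\ref{m_corollary_a} is exactly the value of $\rho$ for which the boundary point $(\arctan\rho,\,-\arctan\kappa\rho)$ lies on the edge of the forbidden region defined by Corollary~\ref{cor:2a}, namely on the line $b\arctan\rho + a\arctan\kappa\rho = (a/2+b/2-1)\cdot180^o$ (in radians, $(a/2+b/2-1)\pi$). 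Establishing this identity of boundaries, for every $\kappa>0$, is what makes the two families of half-plane constraints cut out the same region, and hence the two theorems equivalent.

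First I would fix coprime $a,b$ and rewrite $q_-(t)$ in a form that exposes its connection to the delay multipliers $M^-_\tau$. The key observation is that $\angle M^-_\tau(j\omega) = \angle(1-e^{-j\tau\omega})$, and evaluating this at $\omega = a\omega_0$ and $\omega = b\omega_0$ with $t = \tau\omega_0$ gives phases whose tangents are precisely the ratios $\sin(at)/(1-\cos(at))$-type expressions built into the numerator and denominator of $q_-$. So the plan is to show that as $t$ ranges over $(0,\infty)$, the point $(\angle M^-_\tau(aj\omega_0),\,\angle M^-_\tau(bj\omega_0))$ traces out exactly the magenta boundary curve in Fig~\ref{Fig03_1}, and that $|q_-(t)|$ measures the extremal value of $\rho$ attained along this curve subject to the $\kappa$-constraint. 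The tightness statement already asserted in the paper --- that certain $M^-_\tau$ achieve $b\angle M^-_\tau(aj\omega_0)-a\angle M^-_\tau(bj\omega_0)=(a/2+b/2-1)180^o$ --- is the bridge: it shows the duality boundary is attained by delay multipliers, which are exactly the extremal elements in the frequency-interval limit.

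The main technical step is the limiting argument that produces $q_-$ from the functions $\psi,\phi$ of Theorem~\ref{Meg_a} as the interval lengths $\beta-\alpha$ and $\delta-\gamma$ tend to zero with $\alpha,\beta\to a\omega_0$ and $\gamma,\delta\to b\omega_0$. I would perform a first-order expansion: writing $\beta=\alpha+\epsilon_1$, $\delta=\gamma+\epsilon_2$ and using the constraint $\lambda/\mu=(\delta^2-\gamma^2)/(\beta^2-\alpha^2)$, the differences $\cos(\alpha t)-\cos(\beta t)$ and the $\sin$ terms in $\phi_1$ become, to leading order, derivatives scaled by $\epsilon_i$; after dividing numerator and denominator by a common factor the interval widths cancel, leaving the trigonometric ratio defining $q_-(t)$ (with the constant terms $\lambda(\beta-\alpha)+\kappa\mu(\delta-\gamma)$ producing the $b+\kappa a$ in the denominator). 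For Theorem~\ref{Meg_equiv_b} the identical argument applies with $\tilde\phi$ and $M^+_\tau$ replacing $\phi$ and $M^-_\tau$, yielding $q_+$; the odd case also requires checking that the parity condition on $p$ (with $p=1$ or $p=1/2$) matches the distinction between $\sup|q_-|$ alone and $\max(\sup|q_-|,\sup|q_+|)$.

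The hard part will be the limiting computation: verifying that the $\sup_{t>0}$ genuinely survives the limit (i.e.\ that the supremum of the ratio $|\psi|/\phi$ converges to $\sup_t|q_-(t)|$ rather than to some degenerate value), and handling the removable singularities at $[t]_{[0,\pi]}=0$ where both numerator and denominator of $q_-$ vanish. I would treat these points by a separate continuity argument, confirming that the defined value $q_-(t)=0$ there is the correct limit. The remaining work --- matching the boundary line from the duality approach with the extremal $\rho$ from $q_-$, and confirming the $\kappa$-parameterised points in Fig~\ref{Fig03_1} lie on the duality boundary --- is then an algebraic identity relating $\arctan$ of the trigonometric ratios to the phase-sum condition $(a/2+b/2-p)\pi$, which follows from the tightness characterisation already stated.
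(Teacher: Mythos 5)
Your proposal has a genuine gap, and it also spends most of its effort in the wrong place. The long limiting computation ($\beta=\alpha+\epsilon_1$, $\delta=\gamma+\epsilon_2$, $\varepsilon\to 0$) is not the content of Theorem~\ref{Meg_equiv_b} at all: that is precisely the proof of Corollary~\ref{m_corollary_b}, which the paper carries out separately and which you may take as given here. What Theorem~\ref{Meg_equiv_b} requires is a proof that, for every $\kappa>0$,
\begin{equation}
\sup_{t>0}\Bigl(b\arctan q_{\pm}(t)+a\arctan\kappa q_{\pm}(t)\Bigr)
\end{equation}
equals \emph{exactly} the duality bound $(a/2+b/2-p)\pi$ in radians, i.e.\ $(a+b-2p)\tfrac{\pi}{2}$, with the correct parity-dependent $p$. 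Equivalence needs two directions: attainment (some $t$ reaches the bound) \emph{and} the upper bound (no $t$ exceeds it). Your bridge via the tightness statement and the delay-multiplier phase curve can only deliver attainment. Indeed, the identification $\tan\angle M^-_\tau(aj\omega_0)=\cot(at/2)$ matches $q_-(t)$ only at the special points where (in the paper's notation) $m_\pm(t)=0$; at those points $(\arctan q_\pm,-\arctan\kappa q_\pm)$ does lie on the delay-multiplier curve and lands on the lattice of values that touches the bound. But $q_\pm$ has a second family of turning points (where $n_\pm(t)=0$) which do \emph{not} correspond to any delay multiplier, and nothing in your plan rules out that $\sup_t|q_\pm(t)|$ is attained there with a value strictly beyond the duality boundary — in which case Corollary~\ref{m_corollary_b} would be strictly weaker than Theorem~\ref{thm:2b} and the theorem would be false as you have argued it. Bounding that second family is the hard core of the paper's proof: it requires the closed form $\bar q^\dagger=(b^2-a^2)/\bigl(2\sqrt{ab(a+\kappa b)(b+\kappa a)}\bigr)$, monotonicity of the resulting $\bar r^\dagger(\kappa)$ in $\kappa$, and the terminal estimate $\max(a,b)\arctan\bigl|\tfrac{b^2-a^2}{2ab}\bigr|\le(a+b-2)\tfrac{\pi}{2}$. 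You assume the conclusion of that analysis ("$|q_-(t)|$ measures the extremal value attained along this curve") rather than proving it.

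For the odd case specifically, the parity statement also needs an actual mechanism, not just "checking". In the paper it comes out of the $m_+(t)=0$ turning points: these land on the lattice $(-a-b-1+2\lambda)\pi/2$, $\lambda=1,\dots,a+b$, when either $a$ or $b$ is even (giving the bound $(a+b-1)\tfrac{\pi}{2}$, i.e.\ $p=1/2$), but on $(-a-b+2\lambda)\pi/2$ when both are odd (giving $p=1$), while the $n_+(t)=0$ points are absorbed by the same $\bar r^\dagger$ bound as in the non-odd case. Your proposal names the right target — matching $\sup|q_-|$ alone versus $\max(\sup|q_-|,\sup|q_+|)$ to the two values of $p$ — but supplies none of the turning-point analysis that makes it true.
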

\end{subtheorem}

\section{Examples}\label{Exa}

We demonstrate the new condition with three separate examples. In Examples~1 and~2 below we test the criterion for a finite number of coprime integers $a$ and $b$, and for all $\omega>0$; we also search over the slope restriction~$k$. We  run a bisection algorithm for~$k$ and,  for each candidate value of $k$, $a$ and~$b$, check whether the condition is satisfied for any $\omega>0$. Provided the phase of $1/k+G$ is sufficiently smooth, this can be implemented efficiently and systematically, for example by gridding $\omega$ sufficiently finely. There are several possible ways to reorder the computation.

\subsection{Example 1}

J\"{o}nsson and Laiou \cite{Jonsson:96} consider the plant
\begin{equation}\label{JL_G}
G(s) = \frac{s^2}{(s^2+\alpha)(s^2+\beta)+10^{-4}(14s^3+21s)},
\end{equation}
with $\alpha=0.9997$ and $\beta=9.0039$ and with positive feedback. 
They show that the rational multliper
\begin{equation}\label{JL_M}
M(s) = 1 - \left (\frac{2.5}{s+2.5}
		\right )^2.
\end{equation}
is suitable for $1/k-G(s)$ when $k=0.0048$.
Figure \ref{test05a_fig1} shows the phase of $M(j\omega)(1/k-G(j\omega))$ when $k=0.0048$. It can be seen to lie on the interval $[-90^o,90^o]$. They  also show no rational multiplier in $\mathcal{M}_{\mbox{odd}}$ exists when $k=0.0061$ by applying their criterion  with  $N=2$ and the choice $\omega_1=1$ and $\omega_2=3$.  
Fig \ref{test05a_fig2} shows $(3\angle(1/k-G(j\omega))-\angle(1/k-G(3j\omega)))/3$ when $k=0.0061$. It can be seen that the value drops below $-180^o$ near $\omega=1$. Thus Theorem~\ref{thm:2a} confirms there is no suitable multipler in either $\mathcal{M}$ or $\mathcal{M}_{\mbox{odd}}$.

J\"{o}nsson and Laiou \cite{Jonsson:96}  state `the choice of frequencies [...] is a delicate task.''' But a simple line search shows that there is an $\omega$ such that
$(3\angle(1/k-G(j\omega))-\angle(1/k-G(3j\omega)))/3<-180^o$ when $k = 0.0058926$ (see Fig \ref{test05a_fig3}) but $(3\angle(1/k-G(j\omega))-\angle(1/k-G(3j\omega)))/3>-180^o$ for all $\omega$ when  $k = 0.0058925$.
 By Theorem~\ref{thm:2a} there is no multiplier when $k=0.0058926$. By contrast, for this case the choice 
\begin{equation}\label{WPH_M}
M(s)=1-0.99999e^{-0.93287s}
\end{equation}
 is a suitable multiplier when $k=0.0058924$ (Fig \ref{test05a_fig4}). The various computed slopes $k$ are set out in Table~\ref{table_ex1}.

\begin{figure}[htbp]
\begin{center}
\includegraphics[width = 0.9\linewidth]{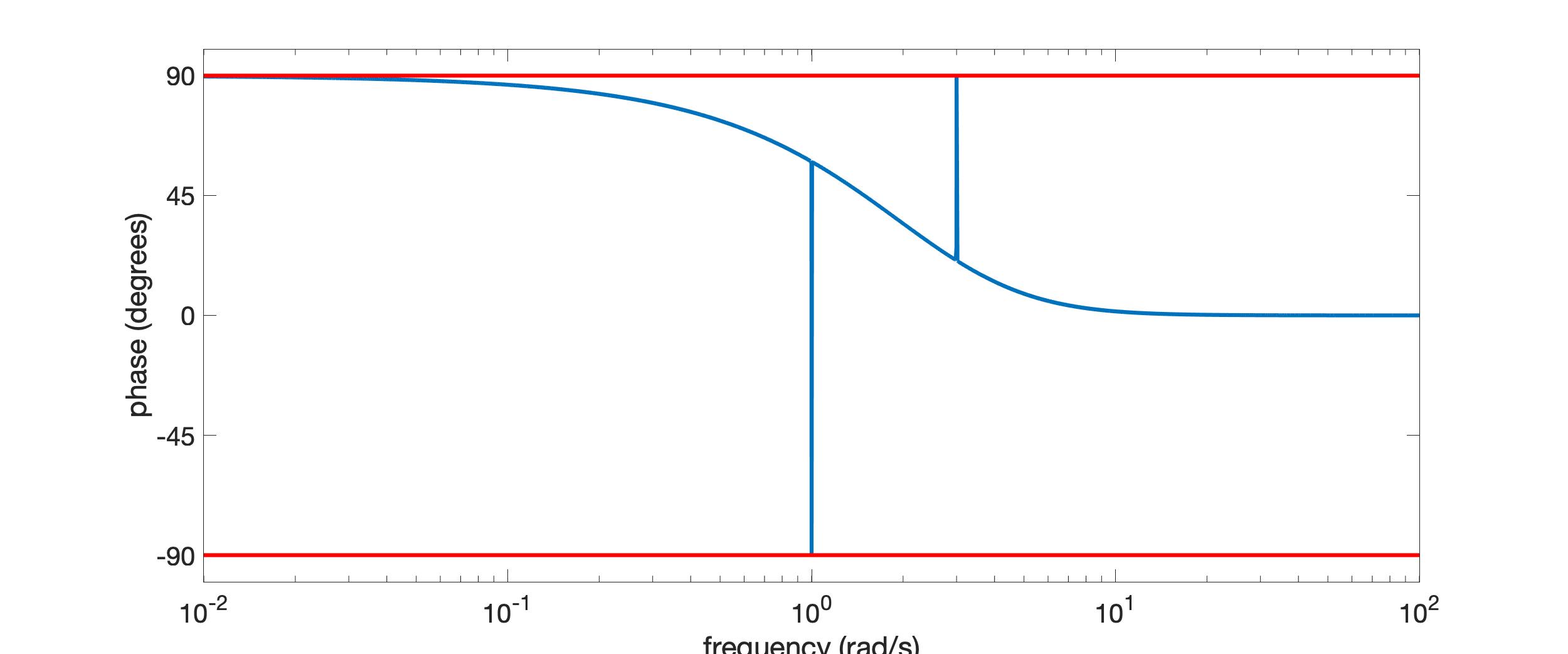}
\caption{Example 1. Phase of $M(j\omega)(1/k-G(j\omega))$ when $k=0.0048$ when $G$ is given by (\ref{JL_G}) and $M$ by (\ref{JL_M}). The phase lies on the interval $[-90^o,90^o]$ so this choice of $M$ is a suitable multiplier for $1/k-G$.}\label{test05a_fig1}
\end{center}
\end{figure}

\begin{figure}[htbp]
\begin{center}
\includegraphics[width = 0.9\linewidth]{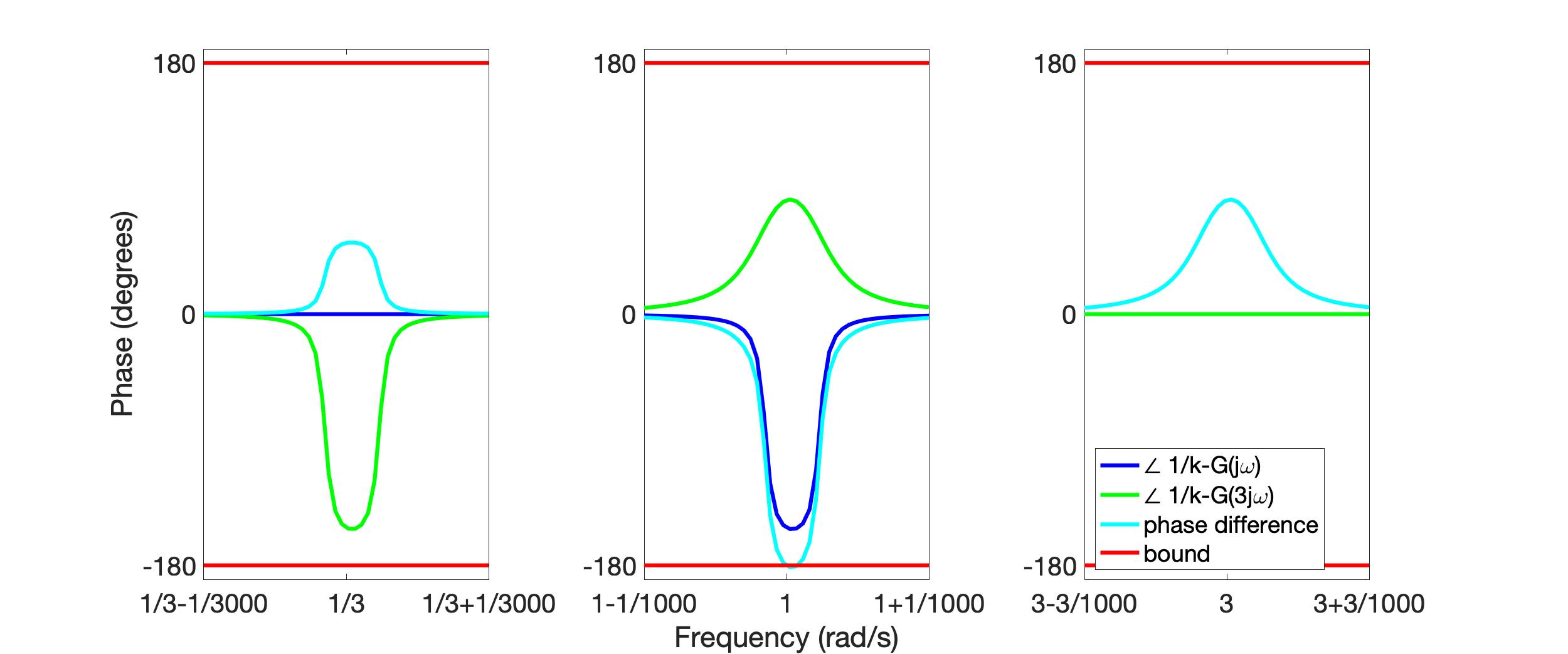}
\caption{Example 1. The phase difference $(3\angle(1/k-G(j\omega))-\angle(1/k-G(3j\omega)))/3$ when $G$ is given by (\ref{JL_G}) with $k=0.0061$. The value drops below $-180^o$ so by Theorem~\ref{thm:2a} there is no suitable multiplier.}\label{test05a_fig2}
\end{center}
\end{figure}

\begin{figure}[htbp]
\begin{center}
\includegraphics[width = 0.9\linewidth]{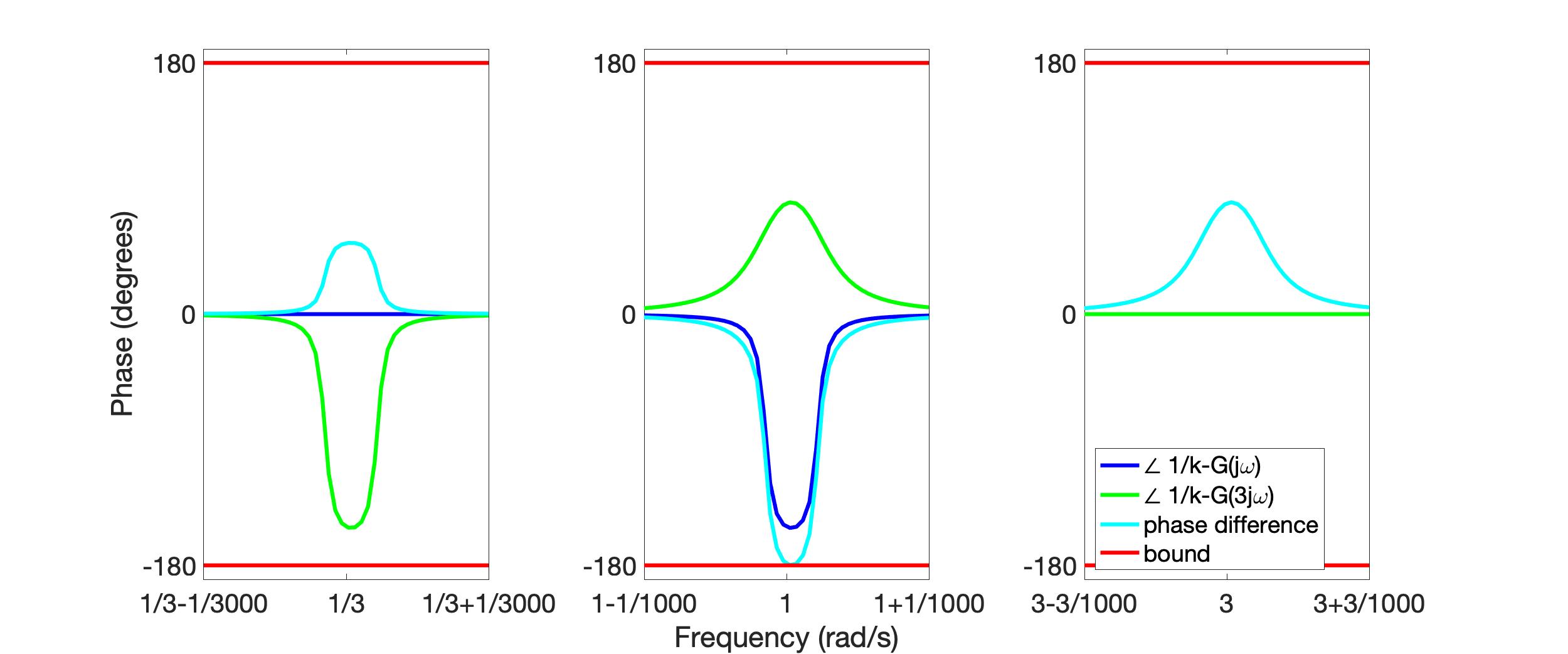}
\caption{Example 1. The phase difference $(3\angle(1/k-G(j\omega))-\angle(1/k-G(3j\omega)))/3$ when $G$ is given by (\ref{JL_G}) with $k=0.0058926$. The value drops below $-180^o$ so by Theorem~\ref{thm:2a} there is no suitable multiplier.}\label{test05a_fig3}
\end{center}
\end{figure}

\begin{figure}[htbp]
\begin{center}
\includegraphics[width = 0.9\linewidth]{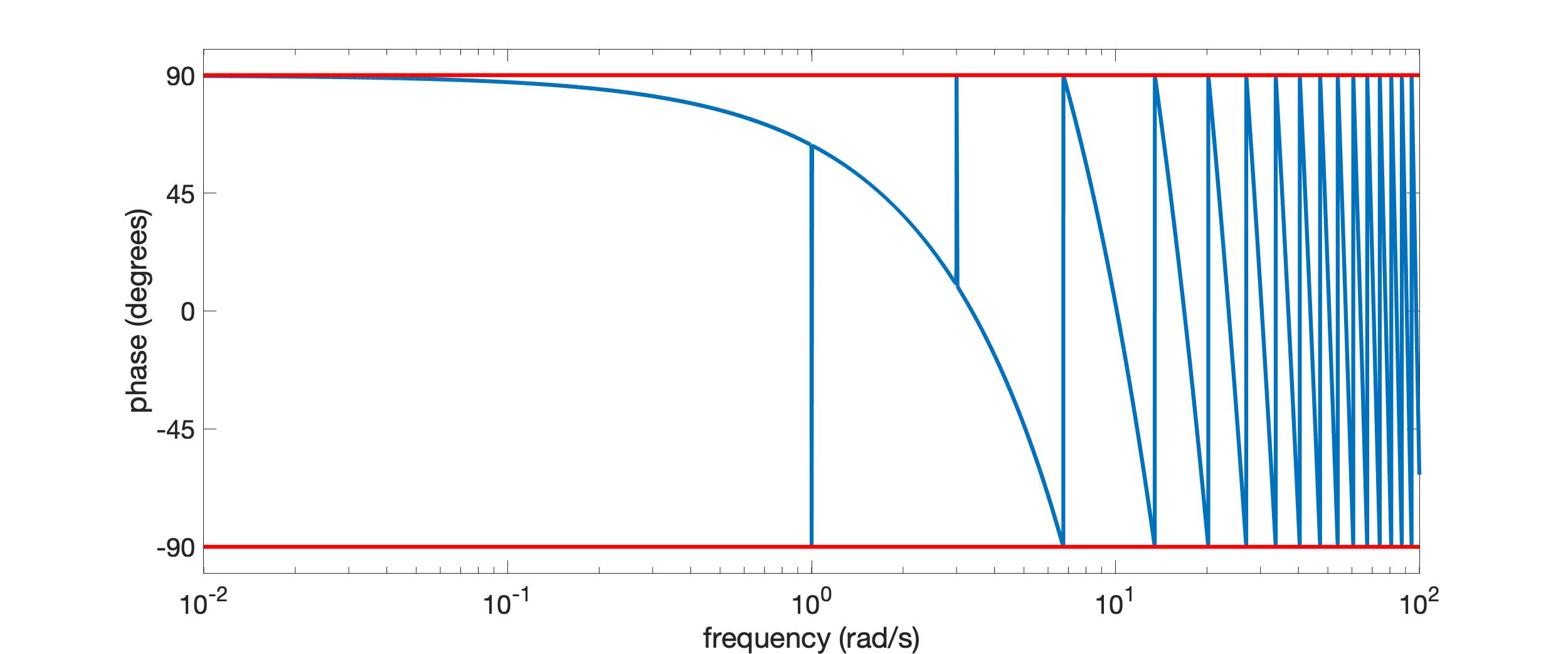}
\caption{Example 1. Phase of $M(j\omega)(1/k-G(j\omega))$ when $k=0.0058924$ when $G$ is given by (\ref{JL_G}) and $M$ by (\ref{WPH_M}). The phase lies on the interval $[-90^o,90^o]$ so this choice of $M$ is a suitable multiplier for $1/k-G$.}\label{test05a_fig4}
\end{center}
\end{figure}

\begin{table}
\begin{center}
    \begin{tabular}{ | l | c | c |}
    \hline
	&  \cite{Jonsson:96} & This paper\\
	\hline
    Slope $k$ for which a multiplier & & \\
 is  found & 0.0048 & 0.0058924\\
	\hline
    Slope $k$ for which there is  & & \\
guaranteed to be no multiplier & 0.0061 & 0.0058926\\
    \hline
    \end{tabular}
\caption{Various slopes for Example 1}\label{table_ex1}
\end{center}
\end{table}

\subsection{Example 2}
Consider the plant
\[G(s) = \frac{s^2}{(s^2+2\xi s + 1)^2}\mbox{ with }\xi>0.\]
O'Shea \cite{OShea67} shows that there is a suitable multiplier in $\mathcal{M}$ for $1/k+G$ when $\xi>1/2$ and $k>0$. By contrast in \cite{Wang:18} we showed that there is no suitable multiplier in $\mathcal{M}$ when $\xi=0.25$ and $k$ is sufficiently large. Specifically the phase of $G(j\omega)$ is above $177.98^o$ on the interval $\omega\in [0.02249,0.03511]$ and below $-177.98^o$ on the interval $\omega\in [1/0.03511,1/0.02249]$. A line search yields that the same condition is true for the phase of $1/k+G(j\omega)$ with $k\geq 269,336.3$ (see Fig~\ref{test19e_fig1}). Hence there is no suitable multipler $M\in\mathcal{M}$ for $1/k+G$ with $k\geq 269,336.3$.

By contrast, Theorem~\ref{thm:2a} with $a=4$ and $b=1$ yields  there is no suitable multipler $M\in\mathcal{M}$ for $1/k+G$ with $k\geq 32.61$. Specifically the phase $(4\angle (1/k+G(j\omega))-\angle(1/k+G(4j\omega)))/4$ exceeds $180^o$ when $k\geq 32.61$ (see Figs~\ref{test19e_fig3} and~\ref{test19f}).
 Similarly, Theorem~\ref{thm:2b} with $a=3$ and $b=1$ yields  there is no suitable multipler $M\in\mathcal{M}_{odd}$ for $1/k+G$ with $k\geq 39.93$. Specifically the phase $(3\angle (1/k+G(j\omega))-\angle(1/k+G(3j\omega)))/3$ exceeds $180^o$ when $k\geq 32.61$.

These results show a non-trivial improvement over those in \cite{Wang:18}. While it should be possible to achieve identical results using either the condition of \cite{Jonsson:96} or that  of  \cite{Wang:18} (see Appendix), the conditions of Theorems~\ref{thm:2a} and~\ref{thm:2b} can be applied in a systematic manner. Fig~\ref{test19d_fig1} shows the bounds for several other values of $\zeta$ while Fig~\ref{test19d_fig2} shows the value of $a$ yielding the lowest bound for each test (the value of $b$ is $1$ 	for each case).

\begin{figure}[htbp]
\begin{center}
\includegraphics[width = 0.9\linewidth]{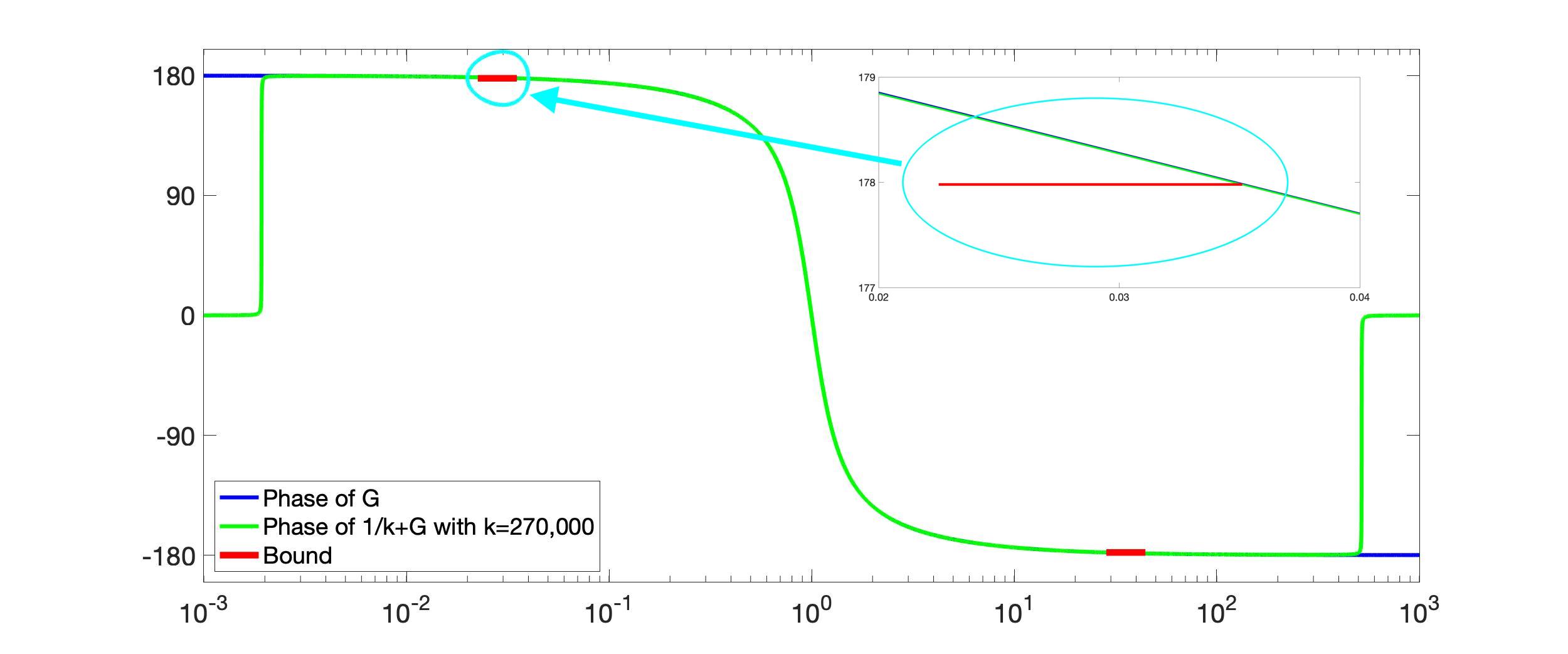}
\caption{Example 2. O'Shea's example with $\zeta=0.25$. Application of the condition in \cite{Wang:18} yields there to be no suitable multiplier $M\in\mathcal{M}$ when $k\geq 270,000$.}\label{test19e_fig1}
\end{center}
\end{figure}

\begin{figure}[htbp]
\begin{center}
\includegraphics[width = 0.9\linewidth]{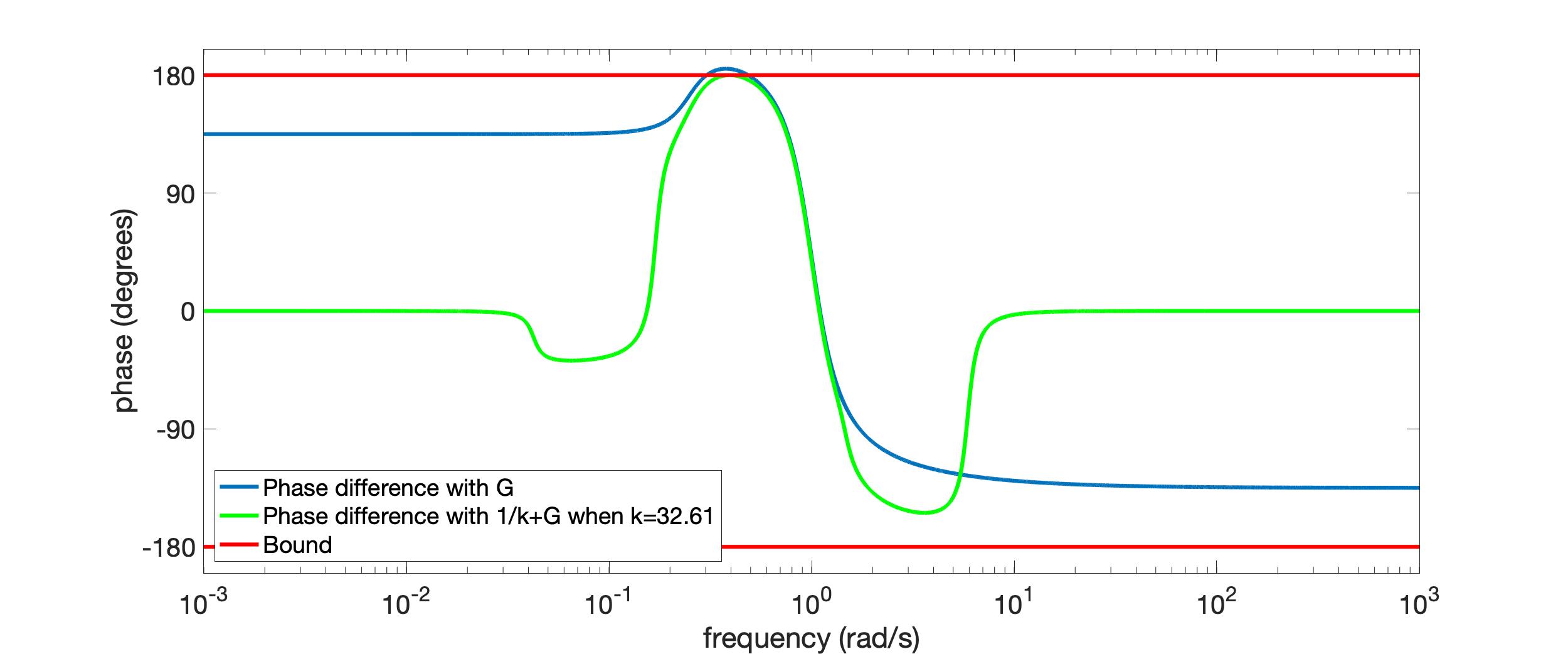}
\caption{Example 2. O'Shea's example with $\zeta=0.25$. Application of Theorem~\ref{thm:2a} with $a=4$ and $b=1$ yields there to be no suitable multiplier $M\in\mathcal{M}$ when $k\geq 32.61$.}\label{test19e_fig3}
\end{center}
\end{figure}

\begin{figure}[htbp]
\begin{center}
\includegraphics[width = 0.9\linewidth]{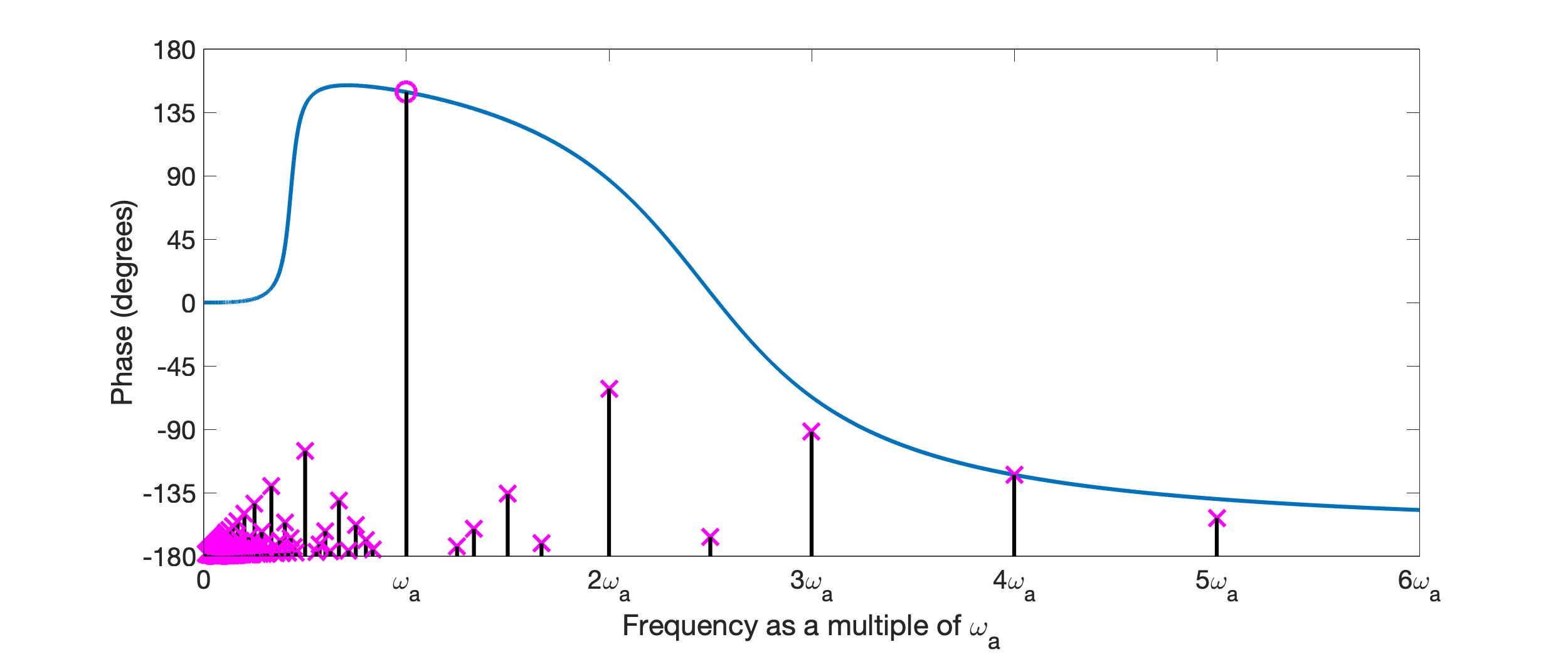}
\caption{Example 2.O'Shea's example with $\zeta=0.25$. The phase of $1/k+G(j\omega)$ with $k=32.61$ is shown. The phase of $1/k+G(j\omega_a)$ is $149.42^o$ at $\omega_a = 0.3938$ and the corresponding forbidden regions are shown (compare Fig~\ref{test02_fig1}). The phase touches the bound at $4\omega_a$.}\label{test19f}
\end{center}
\end{figure}

\begin{figure}[htbp]
\begin{center}
\includegraphics[width = 0.9\linewidth]{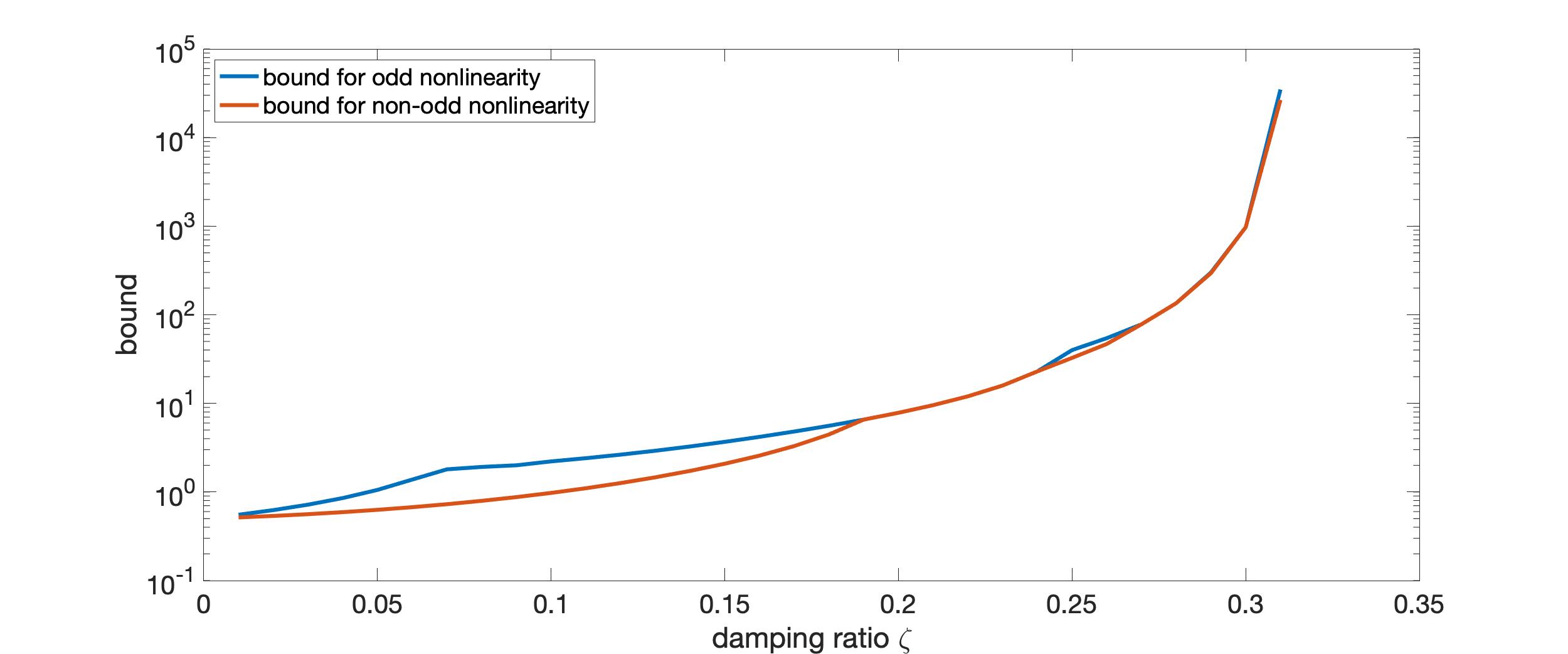}
\caption{Example 2. Bounds on the slope above which Theorem~\ref{thm:2a} or~\ref{thm:2b} guarantee there can be no suitable multiplier as damping ratio~$\zeta$ varies.}\label{test19d_fig1}
\end{center}
\end{figure}

\begin{figure}[htbp]
\begin{center}
\includegraphics[width = 0.9\linewidth]{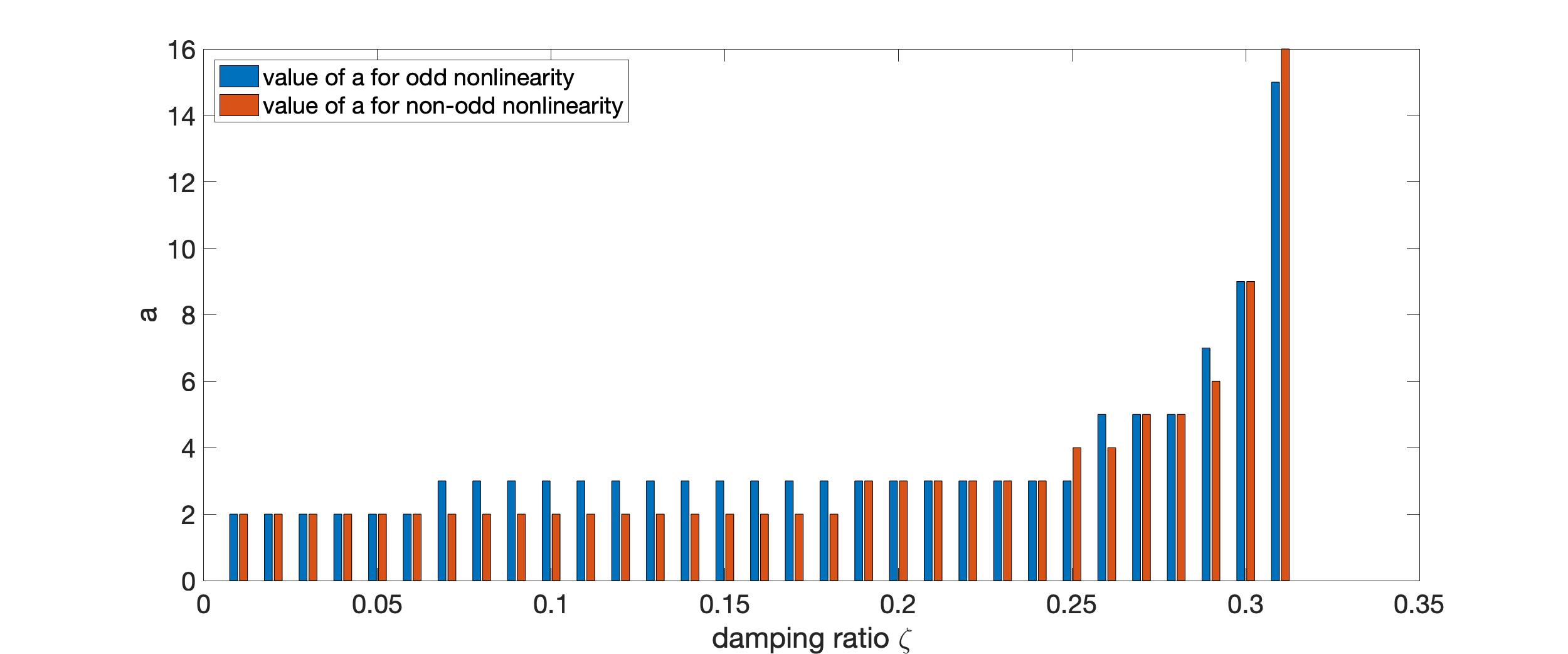}
\caption{Example 2. Values of $a$ used to find the slope bounds shown in Fig~\ref{test19d_fig1}. The value of $b$ is $1$ for all shown results.}\label{test19d_fig2}
\end{center}
\end{figure}


\subsection{Example 3}

In  \cite{Wang:18} we argue that  phase limitatons are closely linked to the Kalman Conjecture. This plays an important role in the theory of absolute stability for Lurye systems. Barabanov \cite{Barabanov88} shows it to be true for third-order systems  via a subclass of the OZF multipliers but fourth-order counterexamples are known \cite{Fitts66,Leonov13}. It is trivial that  negative imaginary systems satisfy the Kalman Conjecture  \cite{Carrasco17}. In \cite{Zhang18} we indicate via the tailored construction of OZF multipliers that second-order systems with delay satisfy the Kalman Conjecture. Until now it has remained an open question whether third-order systems with delay satisfy the Kalman Conjecture.

Consider the third-order system with delay that has transfer function
\begin{equation}
G(s) = e^{-s} \frac{s^2+0.8s+1.5}{s^3+1.2s^2+1.12s+0.32}.
\end{equation}
The Nyquist gain is $k_N=2.0931$. That is to say for all $0\leq k < k_N$ the sensitivity function 
${
\left [
	\begin{array}{cc}
		1 & G\\
		-k & 1
	\end{array}
\right ]^{-1}
}$
is stable.  Fig.~\ref{test22_fig_over} shows  $(2\angle \left (1/2+G(j\omega)\right ) - \angle \left (1/2 + G(2j\omega)\right ))/2$ against frequency. The value drops significantly below $-180^o$, and hence by Theorem~\ref{thm:2a} there is no suitable $M\in\mathcal{M}$ for $1/2+G$. 
The phases of $1/2+G(j\omega)$ and of $1/2+G(2j\omega)$ are superimposed. 
Fig.~\ref{test22_fig3} shows a time response of a Lurye system with gain $2$, a step input at time $t=0$ and simple saturation. The response appears to be periodic. The stable linear response (i.e. without saturation) is superimposed. These results indicate that this is a (first) example of a third order plant with delay which does not satisfiy the Kalman Conjecture.



\begin{figure}[htbp]
\begin{center}
\includegraphics[width = 0.9\linewidth]{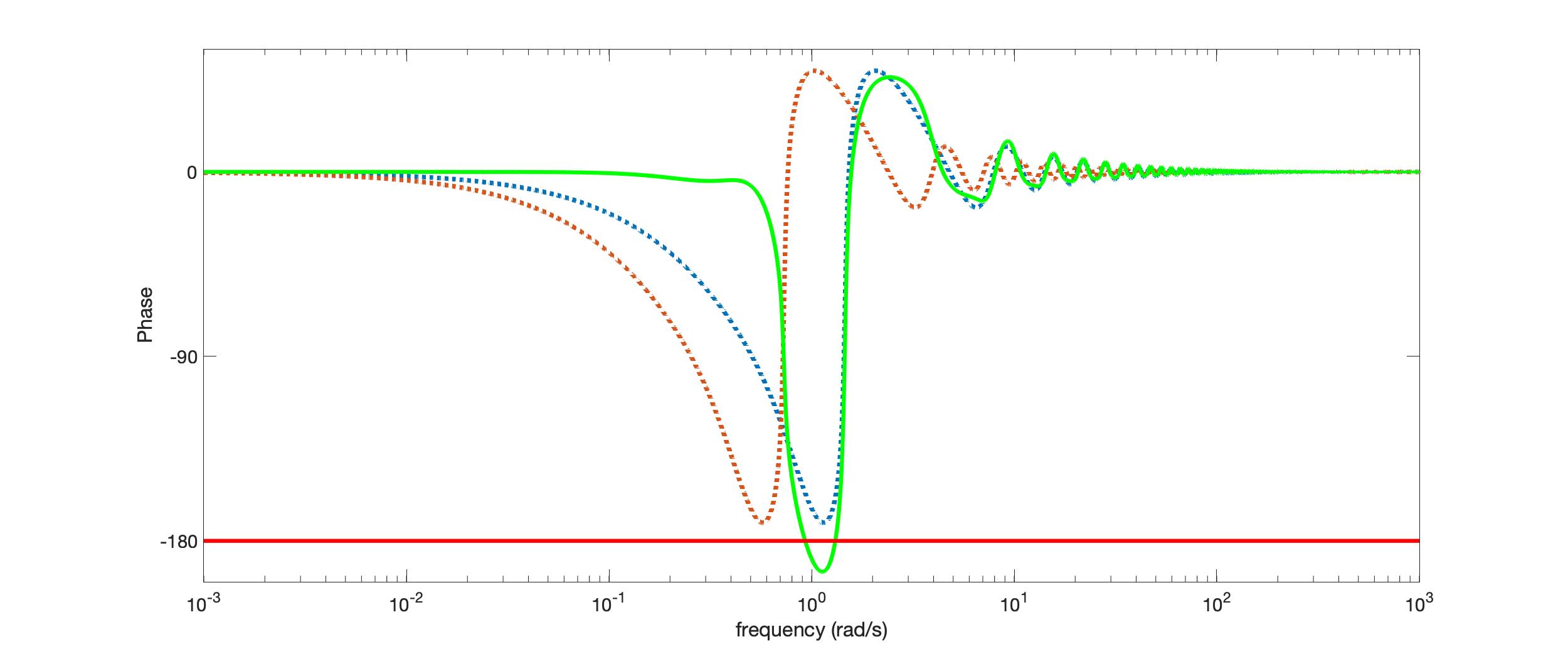}
\caption{Example 3. The value of $(2\angle \left (1/2+G(j\omega)\right )-\angle \left (1/2+G(2j\omega)\right ))/2$ drops below significantly $-180^o$ so by Theorem~\ref{thm:2a} there is no suitable multiplier. The phase of $1/2+G(j\omega)$ (blue dotted) and the phase of $1/2+G(2j\omega)$ (red dotted) are also shown.}\label{test22_fig_over}
\end{center}
\end{figure}

\begin{figure}[htbp]
\begin{center}
\includegraphics[width = 0.9\linewidth]{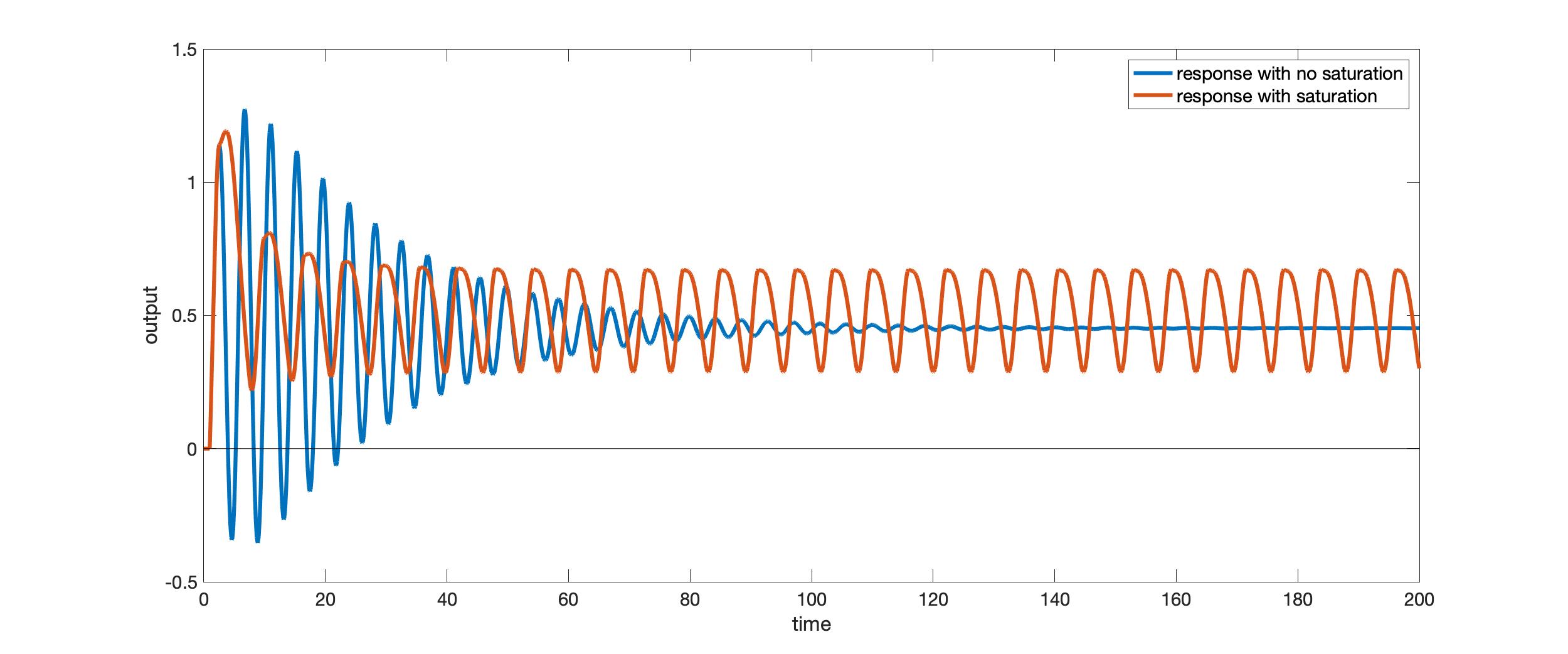}
\caption{Example 3. Time response of the Lurye system, with and without saturation.}\label{test22_fig3}
\end{center}
\end{figure}

\section{Conclusion}

We have presented a simple graphical test that can rule out the existence of suitable OZF multipliers. The test can be implemented efficiently and systematically. The graphical interpretations provide considerable insight to the frequency behaviour of the OZF multipliers. Results show significantly improved results over those in the literature. The test  can be derived either from  the duality approach \cite{Jonsson:96,Jonsson96thesis,Jonsson97,Jonsson99} or from the frequency interval approach \cite{Megretski:95,Wang:18}. 

Guaranteeing there is no suitable OZF multiplier does not necessarily imply a Lurye system is not absolutely stable, although we have conjectured this to be the case \cite{Carrasco:EJC,Wang:18}. Kong and Su \cite{Khong20} show that  the implication is true with a wider class of nonlinearity; for this case the results of this paper may be applied directly. 
For the discrete-time case, Seiler and Carrasco \cite{Seiler21} provide a construction, for certain phase limitations, of a nonlinearity within the class for which the discrete-time Lurye system has a periodic solution. However the conjecture remains open for both continuous-time and discrete-time systems.

More generally results for discrete-time systems are quite different. For discrete-time systems an FIR search for multipliers is effective and outperforms others \cite{Wang:TAC}. With the interval approach  it is possible to find a nontrivial threshold such that the phase of a multiplier cannot be above the threshold over a certain frequency inteval \cite{Wang:18}. The duality approach leads to both a simple graphical test at simple frequencies and a condition at multiple frequencies that can be tested by linear program \cite{Zhang:20}.

This paper's results are for continuous-time single-input single-output multipliers of \cite{Zames68}.  Although multivariable extensions of the OZF multipliers are considered in the literature \cite{Safonov2000, DAmato2001, Kulkarni2002,Mancera2005, Fetzer2017}, it remains open what restrictions there might be. Similarly more general nonlinearities can be addressed with a reduced subset of the OZF multipliers \cite{Rantzer2001, Materassi11, Altshuller:13, Heath2021} and the analysis of this paper might be generalised to such cases. 
It also remains open whether a systematic procedure can be found with more points or intervals.

\appendix[Proofs]

\subsection{Proofs of Theorems~\ref{Jthm_a} and~\ref{Jthm_b}}

\begin{proof}[Proof of Theorem~\ref{Jthm_a}]
Let $M\in\mathcal{M}$ take the form of Definition~\ref{def2a}. 
Then
\begin{equation}
\begin{split}
	M(j\omega) & = m_0-\int_{-\infty}^{\infty}h(t)e^{-j\omega t}\,dt-\sum_{i=1}^{\infty}h_ie^{-j\omega t_i},\\
			& = \bar{m}_0  -\int_{-\infty}^{\infty}h(t)e^{-j\omega t}\,dt+\sum_{i=1}^{\infty}h_iM^-_{t_i}(j\omega),
\end{split}
\end{equation}
where
\begin{equation}\label{barm_ineq}
\bar{m}_0 = m_0-\sum_{i=1}^{\infty}h_i\geq \| h\|_1,
\end{equation}
and
\begin{equation}
\begin{split}
 \sum_{r=1}^N    \lambda_r  \mbox{Re} & \left \{M(j\omega_r) 
				 G(j\omega_r)
			\right \} 
=
\bar{m}_0 \sum_{r=1}^N   \lambda_r  \mbox{Re}\left \{
				 G(j\omega_r)
			\right \} \\
& -  \int_{-\infty}^{\infty}h(t)
 \sum_{r=1}^N   \lambda_r  \mbox{Re}\left \{e^{-j\omega_r t}
				 G(j\omega_r)
			\right \} \, dt\\
&
+ \sum_{i=1}^{\infty}h_i
 \sum_{r=1}^N   \lambda_r  \mbox{Re}\left \{M^-_{t_i}(j\omega_r)
				 G(j\omega_r)
			\right \}.
\end{split}
\end{equation}
Suppose the conditions of Theorem~\ref{Jthm_a} hold. Then, by (\ref{thm1_ineq}),
\begin{equation}
\begin{split}
 \sum_{r=1}^N    \lambda_r  \mbox{Re} & \left \{M(j\omega_r) 
				 G(j\omega_r)
			\right \} 
\leq
\bar{m}_0 \sum_{r=1}^N   \lambda_r  \mbox{Re}\left \{
				 G(j\omega_r)
			\right \} \\
& -  \int_{-\infty}^{\infty}h(t)
 \sum_{r=1}^N   \lambda_r  \mbox{Re}\left \{e^{-j\omega_r t}
				 G(j\omega_r)
			\right \} \, dt\label{no_sum}
\end{split}
\end{equation}
In addition, we can write (\ref{thm1_ineq}) as
\begin{align}\label{thm1_ineq_alt}
\sum_{r=1}^N\lambda_r & \mbox{Re}\left \{
				 G(j\omega_r)
			\right \}  \nonumber\\
& \leq 
\sum_{r=1}^N\lambda_r \mbox{Re}\left \{e^{-j\omega_r\tau}
				 G(j\omega_r)
			\right \} \mbox{ for all } \tau\in \mathbb{R}\backslash 0.
\end{align}
Averaging this expression over $\tau$ yields
\begin{equation}\label{ineq2}
	\begin{split}
\sum_{r=1}^N\lambda_r & \mbox{Re}\left \{
				 G(j\omega_r)
			\right \}
	 =\lim_{T\rightarrow\infty}\frac{1}{T}\int_0^T \sum_{r=1}^N\lambda_r \mbox{Re}\left \{
				 G(j\omega_r)
			\right \}\, dt\\
	& \leq \lim_{T\rightarrow\infty}\frac{1}{T}\int_0^T\sum_{r=1}^N\lambda_r \mbox{Re}\left \{e^{-j\omega_r\tau}
				 G(j\omega_r)
			\right \}\,dt \\
	& = \sum_{r=1}^N\lambda_r \mbox{Re}\left \{ \lim_{T\rightarrow\infty}\frac{1}{T}\int_0^Te^{-j\omega_r\tau}\, dt\,
				 G(j\omega_r)
			\right \} \\
& = 0.
	\end{split}
\end{equation}
From  (\ref{barm_ineq}) and (\ref{ineq2})  we obtain
\begin{equation}
 \bar{m}_0
\sum_{r=1}^N \lambda_r \mbox{Re}\left \{
				 G(j\omega_r)\right \}
\leq 
\|h\|_1
\sum_{r=1}^N \lambda_r \mbox{Re}\left \{
				 G(j\omega_r)
			\right \}.
\end{equation}
This, with (\ref{thm1_ineq_alt}), yields
\begin{align}\label{no_sum2}
 \bar{m}_0
\sum_{r=1}^N \lambda_r & \mbox{Re}\left \{
				 G(j\omega_r)\right \}\nonumber\\
& \leq
 \int_{-\infty}^{\infty} h(t)
\sum_{r=1}^N \lambda_r \mbox{Re}\left \{e^{-j\omega_r t}
				 G(j\omega_r)
			\right \}\,dt.
\end{align}
Together (\ref{no_sum}) and (\ref{no_sum2}) yield
\begin{equation}\label{final_thm1a}
\sum_{r=1}^N \lambda_r \mbox{Re}\left \{M(j\omega_r )
				 G(j\omega_r)
			\right \} \leq 0.
\end{equation}
It follows from Definition~\ref{def1} that $M$ is not suitable for $G$.
\end{proof}
\begin{proof}[Proof of Theorem~\ref{Jthm_b}]
Let $M\in\mathcal{M}$ take the form of Definition~\ref{def2b}. 
Define $\mathcal{H}^+=\{i\in\mathbb{Z}^+\mbox{ such that }h_i\geq 0\}$  and $\mathcal{H}^-=\{i\in\mathbb{Z}^+\mbox{ such that }h_i< 0\}$.
Then
\begin{equation}
\begin{split}
	M(j\omega)   =  & \bar{m}_0  -\int_{-\infty}^{\infty}h(t)e^{-j\omega t}\,dt\\
& +\sum_{i\in\mathcal{H}^+}^{\infty}h_iM^-_{t_i}(j\omega)+\sum_{i\in\mathcal{H}^-}^{\infty}|h_i|M^+_{t_i}(j\omega)
\end{split}
\end{equation}
where this time
\begin{equation}\label{barm_ineq_b}
\bar{m}_0 = m_0-\sum_{i=1}^{\infty}|h_i|\geq \| h\|_1.
\end{equation}
Suppose the conditions of both Theorem~\ref{Jthm_a} and~\ref{Jthm_b} hold. Then (\ref{thm1_ineq}) and (\ref{thm1b_ineq})) yield (\ref{no_sum}) as before, but with $\bar{m}_0$ given by (\ref{barm_ineq_b}). Furthermore,
we can write (\ref{thm1_ineq}) and (\ref{thm1b_ineq}) together as
\begin{equation}\label{thm1_ineq_alt_b}
\begin{split}
\sum_{r=1}^N & \lambda_r \mbox{Re}\left \{
				 G(j\omega_r)
			\right \} \\
& \leq 
-
\sum_{r=1}^N\lambda_r \left | \mbox{Re}\left \{e^{-j\omega_r\tau}
				 G(j\omega_r)
			\right \} 
\right |\mbox{ for all } \tau\in \mathbb{R}\backslash 0.
\end{split}
\end{equation}
Since (\ref{ineq2}) still holds, from
 (\ref{barm_ineq_b}), (\ref{thm1_ineq_alt_b}) and (\ref{ineq2})  we obtain
\begin{align}\label{no_sum2b}
 \bar{m}_0
\sum_{r=1}^N  & \lambda_r \mbox{Re}\left \{
				 G(j\omega_r)\right \}\nonumber\\
&
+\int_{-\infty}^{\infty} |h(t)|
\sum_{r=1}^N \lambda_r \left | \mbox{Re}\left \{e^{-j\omega_r t}
				 G(j\omega_r)
			\right \}
\right |
\,dt \leq 0.
\end{align}
Together (\ref{no_sum}) and (\ref{no_sum2b}) yield  (\ref{final_thm1a}) as before.
It follows from Defintion~\ref{def1} that $M$ is not suitable for $G$.
\end{proof}



\subsection{Proof of Theorems~\ref{thm:2a} and \ref{thm:2b}}



In the following we apply Theorems~\ref{Jthm_a} and~\ref{Jthm_b} with $N=2$. Furthermore, we assume $\omega_2/\omega_1$ is rational, i.e. that there is some $\omega_0>0$ and integers $a$ and $b$ such that either $\omega_1=a \omega_0$ and $\omega_2=b\omega_0$ or $\omega_1=b \omega_0$ and $\omega_2=a\omega_0$. 
We begin with two technical lemmas.
\begin{subtheorem}{lemma}
\begin{lemma}\label{lem1a} Let $a$ and $b$ be coprime positive integers and
\begin{equation}\label{def_f1}
\begin{split}
	f_1(\omega) & = -b \sin\theta (\cos\phi-\cos(\phi-aw))\\
	f_2(\omega) & = -a\sin\phi(\cos\theta-\cos(\theta+b\omega))
\end{split}
\end{equation}
with $\omega\in\mathbb{R}$ and $\theta,\phi\geq 0$. Then
\begin{equation}f_1(\omega)+f_2(\omega) \leq 0 \mbox{ for all }\omega,\end{equation}
provided
\begin{equation}\label{new_p_pi}
a\theta+b\phi < p \pi,
\end{equation}
with $p=1$.
\end{lemma}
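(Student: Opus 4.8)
The plan is to treat $F(\omega):=f_1(\omega)+f_2(\omega)$ as a single trigonometric function of $\omega$ and to show that its global maximum equals $0$, attained at $\omega=0$. First I would record the elementary consequences of the hypothesis: since $a,b\ge 1$ and $a\theta+b\phi<\pi$ we have $\theta,\phi\in[0,\pi)$, so $\sin\theta,\sin\phi\ge 0$, and moreover $\theta+\phi<\pi$ with at most one of $\theta,\phi$ exceeding $\pi/2$ (if both did, $a\theta+b\phi>(a+b)\pi/2\ge\pi$). Expanding the cosines gives the working identity
\begin{equation}
F(\omega)=-b\sin\theta\cos\phi\,(1-\cos a\omega)-a\sin\phi\cos\theta\,(1-\cos b\omega)+\sin\theta\sin\phi\,(b\sin a\omega-a\sin b\omega),
\end{equation}
or, after a sum-to-product step,
\begin{equation}
F(\omega)=2b\sin\theta\sin\!\big(\tfrac{a\omega}{2}\big)\sin\!\big(\phi-\tfrac{a\omega}{2}\big)-2a\sin\phi\sin\!\big(\tfrac{b\omega}{2}\big)\sin\!\big(\theta+\tfrac{b\omega}{2}\big).
\end{equation}
The first form is the more suggestive one: its cross term is exactly the numerator $b\sin a\omega-a\sin b\omega$ that reappears in $q_-$ of Corollary~\ref{m_corollary_a}, which is the structural reason the duality and interval approaches coincide. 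Note $F(0)=0$, and that $F$ is $2\pi$-periodic (each of $\cos a\omega,\cos b\omega,\sin a\omega,\sin b\omega$ is), so it suffices to prove $F(\omega)\le 0$ on a single period, say $\omega\in[0,2\pi]$.

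Next I would verify the behaviour at the candidate maximiser $\omega=0$. Differentiating, $F'(\omega)=ab\big[\sin\theta\sin(\phi-a\omega)-\sin\phi\sin(\theta+b\omega)\big]$, so $F'(0)=0$, and
\begin{equation}
F''(0)=-ab\big[a\sin\theta\cos\phi+b\sin\phi\cos\theta\big]=-ab\Big[\tfrac{a+b}{2}\sin(\theta+\phi)+\tfrac{a-b}{2}\sin(\theta-\phi)\Big].
\end{equation}
A short argument using the hypothesis shows the bracket is positive (when $\phi>\pi/2$ the constraint forces $b=1$ and $\theta<(\pi-\phi)/a$, and one checks the resulting boundary value is nonnegative, with the symmetric statement when $\theta>\pi/2$), so $\omega=0$ is a strict local maximum. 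This step already reveals that $a\theta+b\phi<\pi$ is the exact threshold: as $a\theta+b\phi\uparrow\pi$ the bracket tends to $0$, so there is no slack, and any global argument must use the hypothesis sharply.

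For the global statement I would pursue the integral representation $F(\omega)=ab\int_0^\omega D(\tau)\,d\tau$ with $D(\tau)=\sin\theta\sin(\phi-a\tau)-\sin\phi\sin(\theta+b\tau)$. Since $D$ has zero mean and period $2\pi$, the running integral is itself $2\pi$-periodic and vanishes at both $\omega=0$ and $\omega=2\pi$; it is strictly negative just to the right of $0$, because $D(\tau)\sim-\tau\,[a\sin\theta\cos\phi+b\sin\phi\cos\theta]<0$. It then suffices to show the running integral never returns to a positive value, i.e.\ that $F\le 0$ at each of its local maxima in $(0,2\pi)$, which occur at the zeros of $D$ where $D$ changes from $+$ to $-$. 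The plan is to locate and count these sign changes over one period, where coprimality of $a$ and $b$ organises the zero pattern, and to bound the accumulated area. As an alternative route I would keep the dual viewpoint in reserve: fix $\omega$ and maximise $F$ over the closed triangle $\{\theta,\phi\ge 0,\ a\theta+b\phi\le\pi\}$; on the leg $\theta=0$ one gets $F=a\sin\phi(\cos b\omega-1)\le 0$ and on $\phi=0$ one gets $F=b\sin\theta(\cos a\omega-1)\le 0$, reducing the problem to the hypotenuse together with the interior critical points.

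The hard part is precisely this global step. The local analysis shows the bound is tight at the boundary of the admissible region, so a crude estimate cannot work: in particular, maximising $f_1$ and $f_2$ separately decouples the shared $\omega$ and yields the useless positive bound $b\sin\theta(1-\cos\phi)+a\sin\phi(1-\cos\theta)$. The proof must therefore exploit the rigid coupling $a\omega\leftrightarrow b\omega$ and the coprimality of $a,b$ to balance the oscillatory cross term $b\sin a\omega-a\sin b\omega$ against the two non-positive $(1-\cos)$ contributions across a full period. Once Lemma~\ref{lem1a} is established, feeding the extremal phases $\phi=\angle G(ja\omega_0)$ and $-\theta=\angle G(jb\omega_0)$ into Theorem~\ref{Jthm_a} with $N=2$, using nonnegative weights proportional to $b\sin\theta$ and $a\sin\phi$ at the two frequencies, yields the phase limitation of Theorem~\ref{thm:2a}.
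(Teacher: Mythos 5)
Your setup is sound and your local computations are correct: the expansion of $F$, the product forms, $F'(0)=0$, the positivity of $a\sin\theta\cos\phi+b\sin\phi\cos\theta$ under the hypothesis, and the non-positivity of $F$ on the legs $\theta=0$ and $\phi=0$ of the triangle all check out. But the proposal stops exactly where the lemma begins: you never establish $F(\omega)\le 0$ globally, and you say so yourself (``the hard part is precisely this global step''). A plan to ``locate and count the sign changes of $D$ over one period \ldots and bound the accumulated area'' is not an argument; nothing in the proposal explains how coprimality or the hypothesis $a\theta+b\phi<\pi$ would actually control the running integral at its interior local maxima, and your own observation that there is no slack as $a\theta+b\phi\uparrow\pi$ shows any such area estimate would have to be exact --- which is precisely the difficulty you have not resolved.

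The idea you are missing --- the pivot of the paper's proof --- is a modular-arithmetic \emph{disjointness} statement, not an area estimate. From your own product forms, $f_1(\omega)=2b\sin\theta\sin(a\omega/2)\sin(\phi-a\omega/2)$ can be positive only if $[a\omega]_{[0,2\pi]}\in(0,2\phi)$, and $f_2(\omega)=-2a\sin\phi\sin(b\omega/2)\sin(\theta+b\omega/2)$ can be positive only if $[-b\omega]_{[0,2\pi]}\in(0,2\theta)$. If both held simultaneously there would be integers $m,n$ with $a\omega+2m\pi\in(0,2\phi)$ and $-b\omega+2n\pi\in(0,2\theta)$; multiplying by $b$ and $a$ respectively and adding gives $2(mb+na)\pi\in\bigl(0,2(a\theta+b\phi)\bigr)\subset(0,2\pi)$, which no integer $mb+na$ can satisfy. (Coprimality is what makes this sharp: once $a\theta+b\phi$ may exceed $\pi$, one can solve $mb+na=1$.) So under the hypothesis $f_1$ and $f_2$ are never simultaneously positive, and the global problem collapses to showing $f_1+f_2\le 0$ on the intervals where $f_1\ge 0$ (symmetrically where $f_2\ge 0$; elsewhere the sum is trivially $\le 0$). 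There the paper argues locally: on $[a\omega]_{[0,2\pi]}\in[0,\phi]$ it bounds $df_1/d\omega$ above via the tangent line ($\sin(\phi-a\omega)\le\sin\phi-a\omega\cos\phi$) and $\sin(\theta+b\omega)$ below via a chord (local convexity), giving $f_1'+f_2'\le 0$ and hence $f_1+f_2\le 0$ from the zero initial value; on $[a\omega]_{[0,2\pi]}\in[\phi,2\phi]$ it uses the reflection symmetry of $f_1$ about $[a\omega]_{[0,2\pi]}=\phi$ together with $f_2\le0$ there. Your $F''(0)$ computation and triangle-boundary reduction are consistent with all of this but cannot substitute for it: without the disjointness observation, your route supplies no mechanism by which the oscillatory cross term $b\sin a\omega-a\sin b\omega$ is forced to lose to the two $(1-\cos)$ terms over a whole period.
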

\begin{lemma}\label{lem1b} Let $a$ and $b$ be coprime positive integers and
\begin{equation}\label{def_f3}
\begin{split}
	f_3(\omega) & = -b \sin\theta (\cos\phi+\cos(\phi-aw))\\
	f_4(\omega) & = -a\sin\phi(\cos\theta+\cos(\theta+b\omega))
\end{split}
\end{equation}
with $\omega\in\mathbb{R}$ and $\theta,\phi\geq 0$. Then
\begin{equation}f_3(\omega)+f_4(\omega) \leq 0 \mbox{ for all }\omega,\end{equation}
provided (\ref{new_p_pi}) holds with $p=1$ when $a$ and $b$ are both odd and $p=1/2$ when either $a$ or $b$ is even.
\end{lemma}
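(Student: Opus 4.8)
The plan is to establish $f_3(\omega)+f_4(\omega)\le 0$ by reducing to Lemma~\ref{lem1a} whenever possible, and by a direct estimate otherwise. First I would record the elementary consequences of \eqref{new_p_pi}: since $a,b\ge 1$ and $a\theta+b\phi<p\pi\le\pi$ we have $\theta,\phi\in[0,\pi)$, so $\sin\theta\ge0$ and $\sin\phi\ge0$; in the case $p=1/2$ one moreover has $\theta<\pi/(2a)$ and $\phi<\pi/(2b)$, hence $\cos\theta>0$ and $\cos\phi>0$. I would also note that $f_3+f_4$ is $2\pi$-periodic in $\omega$ because $\gcd(a,b)=1$, so it suffices to bound it over one period.

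The case in which $a$ and $b$ are both odd reduces immediately to Lemma~\ref{lem1a}. Replacing $\omega$ by $\omega+\pi$ and using that $a$ is odd gives $\cos(\phi-a(\omega+\pi))=\cos(\phi-a\omega-a\pi)=-\cos(\phi-a\omega)$, and likewise $\cos(\theta+b(\omega+\pi))=-\cos(\theta+b\omega)$ since $b$ is odd. Comparing \eqref{def_f3} with \eqref{def_f1} this shows $f_3(\omega+\pi)=f_1(\omega)$ and $f_4(\omega+\pi)=f_2(\omega)$. The hypothesis for this case is precisely $a\theta+b\phi<\pi$, i.e.\ \eqref{new_p_pi} with $p=1$, which is exactly what Lemma~\ref{lem1a} requires; hence $f_3(\omega+\pi)+f_4(\omega+\pi)=f_1(\omega)+f_2(\omega)\le0$, and since $\omega$ is arbitrary we obtain $f_3+f_4\le0$ everywhere.

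The remaining case, in which exactly one of $a,b$ is even, is where the genuine work lies and is the step I expect to be the main obstacle. Here no shift flips both cosines at once --- one checks that a common shift reversing both signs exists only when $a/b$ is a ratio of odd integers, i.e.\ only when $a$ and $b$ are both odd --- so Lemma~\ref{lem1a} is unavailable and the sharper bound $a\theta+b\phi<\pi/2$ must be used directly. Using the symmetry $(a,b,\theta,\phi,\omega)\mapsto(b,a,\phi,\theta,-\omega)$, which interchanges $f_3$ and $f_4$ while preserving both their sum and the constraint, I may assume $a$ is even and $b$ is odd. Expanding the cosines by the angle-addition formula, the inequality $f_3+f_4\le0$ becomes the assertion that
\[
b\sin\theta\cos\phi\,(1+\cos a\omega)+a\sin\phi\cos\theta\,(1+\cos b\omega)+\sin\theta\sin\phi\,(b\sin a\omega-a\sin b\omega)\ge 0,
\]
the left-hand side being exactly $-(f_3+f_4)$. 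Its first two summands are nonnegative by the sign facts above, so the task reduces to dominating the cross term $\sin\theta\sin\phi\,(a\sin b\omega-b\sin a\omega)$ by them. Writing $1+\cos a\omega=2\cos^2(a\omega/2)$ and $\sin a\omega=2\sin(a\omega/2)\cos(a\omega/2)$ (and similarly in $b\omega$) turns this into a bound of tangent-half-angle type, and the constraint $a\theta+b\phi<\pi/2$ is what makes it hold.

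The difficulty is to carry out this last estimate uniformly in $\omega$ and simultaneously for every admissible even $a$ and odd $b$, rather than for a single pair; I would do so by controlling the sign changes of $\partial_\omega(f_3+f_4)=ab\bigl(\sin\phi\sin(\theta+b\omega)-\sin\theta\sin(\phi-a\omega)\bigr)$ over one period, locating its finitely many critical points, and verifying $f_3+f_4\le0$ at the maximizing one using \eqref{new_p_pi}. That the bound $p=1/2$ cannot be relaxed is already visible in the sub-case $a=2$, $b=1$: evaluating at $\omega=\pi/2$ gives $f_3=0$ and $f_3+f_4=-2\sin\phi(\cos\theta-\sin\theta)$, so the required inequality forces $\cos\theta\ge\sin\theta$, i.e.\ $\theta\le\pi/4$, which is exactly what $2\theta<\pi/2$ provides; this matches the tightness of the $\mathcal{M}^+$ multipliers noted in the main text.
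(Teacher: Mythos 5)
Your treatment of the both-odd case is correct and complete: since $a$ and $b$ are both odd, $f_3(\omega+\pi)=f_1(\omega)$ and $f_4(\omega+\pi)=f_2(\omega)$, so that case follows at once from Lemma~\ref{lem1a} with $p=1$. This is a genuinely different, and arguably tidier, route than the paper takes, which handles both parities by one uniform argument rather than by reduction.

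The sub-case where one of $a,b$ is even, however, contains a real gap. Your algebra, including the identity
\[
-(f_3+f_4)=b\sin\theta\cos\phi\,(1+\cos a\omega)+a\sin\phi\cos\theta\,(1+\cos b\omega)+\sin\theta\sin\phi\,(b\sin a\omega-a\sin b\omega),
\]
is correct, but the decisive step---dominating the cross term by the two nonnegative terms, uniformly in $\omega$ and over all admissible $(a,b)$---is never carried out. ``Locating the finitely many critical points and verifying the inequality at the maximizing one'' is a plan, not a proof, and it is not executable as stated: the critical points solve the transcendental equation $\sin\phi\sin(\theta+b\omega)=\sin\theta\sin(\phi-a\omega)$, which has no closed form and no uniform enumeration over all even $a$ and odd $b$. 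The $(a,b)=(2,1)$ computation at the end only shows that $p=1/2$ cannot be weakened; it contributes nothing in the positive direction. The idea you are missing is arithmetic rather than analytic, and it is exactly how the paper argues. By sum-to-product, $f_3=-2b\sin\theta\cos(a\omega/2)\cos(\phi-a\omega/2)$ and $f_4=-2a\sin\phi\cos(b\omega/2)\cos(\theta+b\omega/2)$, so $f_3(\omega)>0$ forces $[a\omega]_{[0,2\pi]}\in(\pi,\pi+2\phi)$ and $f_4(\omega)>0$ forces $[-b\omega]_{[0,2\pi]}\in(\pi,\pi+2\theta)$. If both held, i.e.\ $a\omega+2m\pi\in(\pi,\pi+2\phi)$ and $-b\omega+2n\pi\in(\pi,\pi+2\theta)$ for integers $m,n$, then multiplying the first inclusion by $b$, the second by $a$, and adding gives $2(mb+na)\pi\in\bigl((a+b)\pi,(a+b+2p)\pi\bigr)$ by (\ref{new_p_pi}). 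With one of $a,b$ even, $a+b$ is odd, and the open interval $(a+b,a+b+1)$ contains no even integer, so for $p=1/2$ the positivity regions of $f_3$ and $f_4$ are disjoint; coprimality shows this is sharp, since for $p>1/2$ one can realize $mb+na=(a+b+1)/2$. Once disjointness is in hand, $f_3+f_4\le0$ follows by the same slope-restriction/convexity and symmetry argument used in the proof of Lemma~\ref{lem1a}, applied on the intervals where one of the two terms is nonnegative. Note that any successful version of your domination strategy must in particular rule out $f_3$ and $f_4$ being simultaneously positive (otherwise the conclusion is simply false there), so this parity/coprimality fact is not optional---it is the heart of the lemma, and it is the step your sketch leaves unproved.
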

\end{subtheorem}

\begin{proof}[Proof of Lemma \ref{lem1a}]
The term $f_1(\omega)$ is only positive when $[a\omega]_{2\pi}\in(0,2\phi)$. Similarly the term $f_2(\omega)$ is only positive when $[-b\omega]_{2\pi}\in(0,2\theta)$. When $p=1$ there is no $\omega$ such that $f_1(\omega)$ and $f_2(\omega)$ are simultaneously positive. Specifically, suppose $\omega$ is a frequency such that  $a\omega+2m\pi\in(0,2\phi)$ and $-b\omega+2n\pi\in(0,2\theta)$ for some integers $m$ and $n$.  Then $2(mb+na)\pi\in(0,2p\pi)$. This cannot be the case with $p<1$; when $a$ and $b$ are coprime then it can be satisfied with $p>1$ provided $m$ and $n$ are chosen such that $mb+n=1$.

Hence, with $p=1$, it suffices to show that $f_1(\omega)+f_2(\omega)\leq 0$ when $f_1(\omega)\geq 0$, i.e. on the intervals  $0\leq[a\omega]_{2\pi}\leq2\phi$. A similar argument will follow by symmetry for intervals where $f_2(\omega)\geq 0$.
\begin{figure}[htbp]
\begin{center}
\includegraphics[width = 0.9\linewidth]{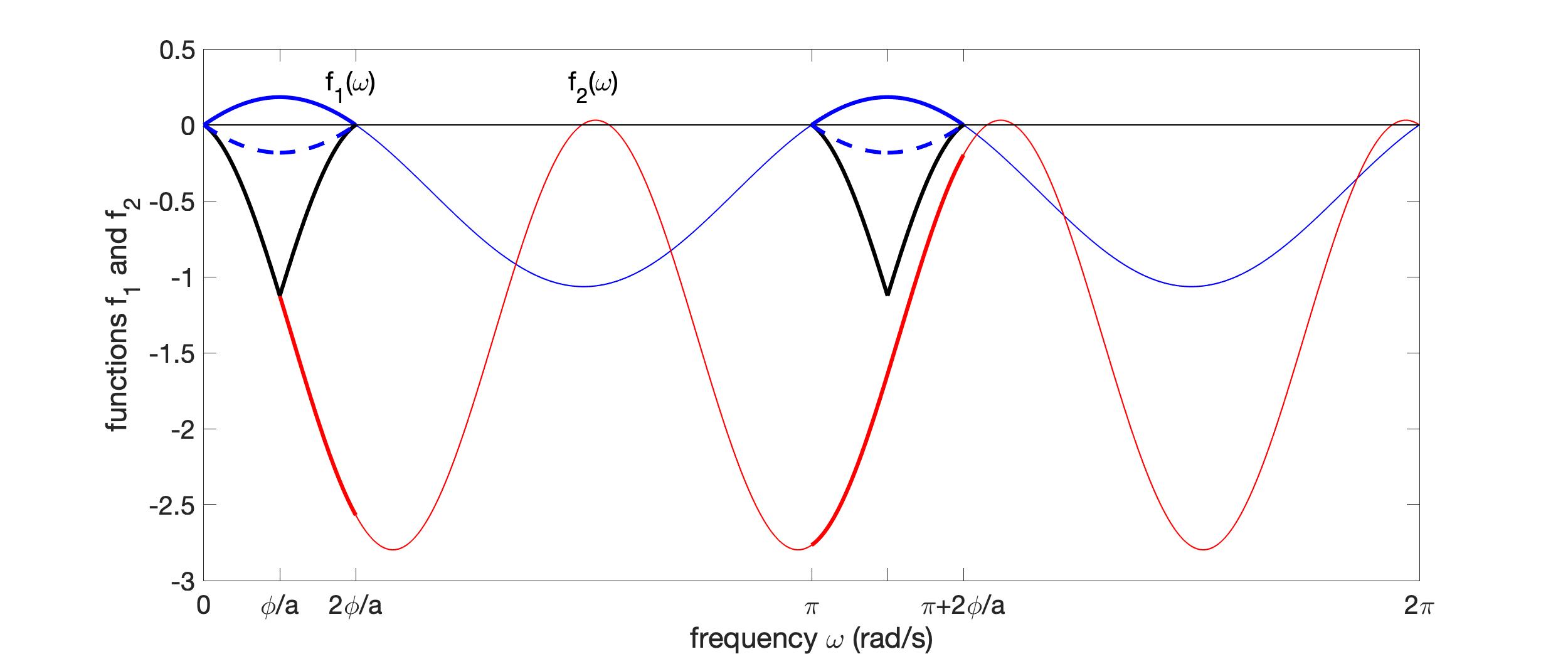}
\caption{Illustration of Lemma~\ref{lem1a} with $a=2$, $b=3$, $\theta=\pi/15$ and $\phi =\pi/4$. The functions $f_1(\cdot)$ and $f_2(\cdot)$ are never simultaneously positive. We have the relations  $f_1(\omega)=f_1(2\phi/a-\omega)$ when $\phi/a\leq\omega\leq2\phi/a$ and also $f_1(\omega)=f_1(\omega-\pi)$ when $\pi\leq\omega\leq\pi+2\phi/a$.
Similarly $f_2(\omega)\leq f_2(2\phi/a-\omega)$ when $\phi/a\leq\omega\leq2\phi/a$,  $f_2(\omega)\leq f_2(\omega-\pi)$ when $\pi\leq\omega\leq\pi+\phi/a$ and  $f_2(\omega)\leq f_2(\pi+2\phi/a-\omega)$ when $\pi +\phi/a \leq\omega\leq\pi+2\phi/a$. Hence to show $f_1(\omega)+f_2(\omega)\leq 0$ when $f_1(\omega)\geq 0$, it suffices to consider the interval $0\leq\omega\leq \phi/a$.}
\end{center}
\end{figure}

Consider first the interval $a\omega\in[0,\phi]$.
We have
\begin{equation}\begin{split}
\frac{df_1}{d\omega}(\omega)  & =   ab\sin\theta \sin(\phi-a\omega)\\
\frac{df_2}{d\omega}(\omega)  & =-ab\sin\phi\sin(\theta+b\omega)
\end{split}\end{equation}
But
\begin{equation}\begin{split}
	\sin (\phi-a\omega) & \leq \sin\phi -a\omega \cos \phi\mbox{ (by slope restriction), and}\\
    \sin(\theta+b\omega) & \geq  \sin\theta +\frac{a\omega}{\phi}\left [\sin\left ( \theta+\frac{b\phi}{a}\right )-\sin\theta\right ]\\
&  \mbox{\hspace{3 cm} (by local convexity)}.
\end{split}\end{equation}
Hence
\begin{equation}\begin{split}
\frac{df_1}{d\omega}(\omega)+\frac{df_2}{d\omega}(\omega) 
  \leq  & -a^2b\omega\sin\theta  \cos \phi\\
&   -\frac{a^2b\omega}{\phi}\left [\sin\left ( \theta+\frac{b\phi}{a}\right )-\sin\theta\right ]\sin\phi\\
	 \leq & 0.
\end{split}\end{equation}
Since $f_1(0)=f_2(0)=0$ if follows that $f_1(\omega)+f_2(\omega)\leq 0$ on the interval $a\omega\in[0, \phi]$.
\begin{figure}[htbp]
\begin{center}
\includegraphics[width = 0.9\linewidth]{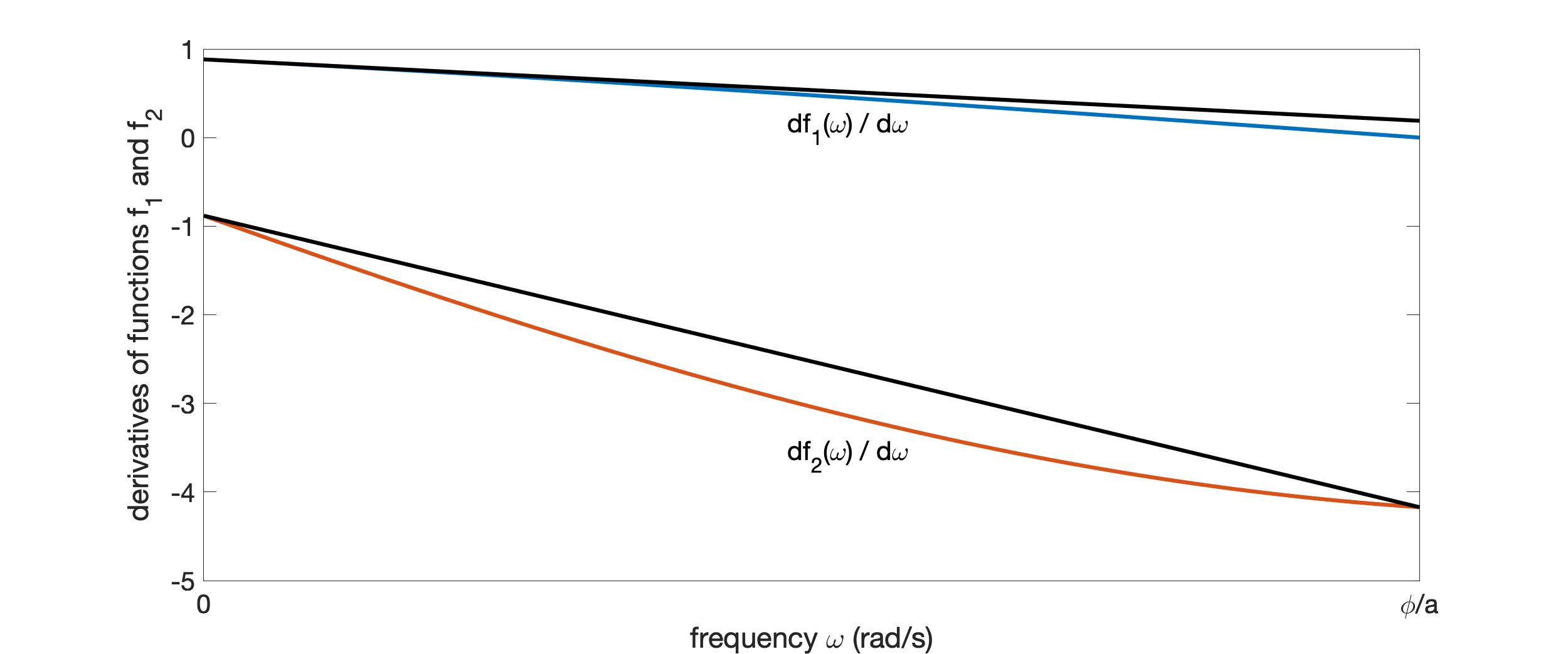}
\caption{Illustration of Lemma~\ref{lem1a} with $a=2$, $b=3$, $\theta=\pi/15$ and $\phi =\pi/4$. On the interval $0\leq\omega\leq \phi/a$ the derivative of $f_1(\cdot)$ is bounded above by its gradient at $\omega=0$ while the derivative of $f_2(\cdot)$ is bounded above by the chord joining its two end points. It follows that $f_1(\cdot)+f_2(\cdot)$ is non-positive on this interval. }
\end{center}
\end{figure}

Consider next the interval $a\omega\in[\phi,2 \phi]$. By symmetry $f_1(\omega) = f_1(2\phi-\omega)$ on this interval. Since $f_2(\omega)\leq 0$ on this interval we must have $f_2(\omega) \leq f_2(2\phi-\omega)$ on this same interval. Hence $f_1(\omega)+f_2(\omega)\leq 0$ on the interval $a\omega\in[\phi, 2\phi]$.

Similar arguments follow: firstly on the intervals $[a\omega]_{2\pi}\in[0,\phi]$ where $f_1(\omega) = f_1([a\omega]_{2\pi}/a)$ and $f_2(\omega) \leq f_2([a\omega]_{2\pi}/a)$; secondly  on the intervals $[a\omega]_{2\pi}\in[\phi,2\phi]$ where $f_1(\omega) = f_1(2\phi-[a\omega]_{2\pi}/a)$ and $f_2(\omega) \leq f_2(2\phi-[a\omega]_{2\pi}/a)$.
\end{proof}
\begin{proof}[Proof of Lemma~\ref{lem1b}]
The term $f_3(\omega)$ is only positive when $[a\omega]_{2\pi}\in(\pi,\pi+2\phi)$. Similarly the term $f_4(\omega)$ is only positive when $[-b\omega]_{2\pi}\in(\pi,\pi+2\theta)$. Let us consider conditions for which they are simultaneously positive. Suppose $\omega$ is a frequency such that  $a\omega+2m\pi\in(\pi,\pi+2\phi)$ and $-b\omega+2n\pi\in(\pi,\pi+2\theta)$ for some integers $m$ and $n$.  Then $2(mb+na)\pi\in((a+b)\pi,(a+b+2p)\pi)$. If $a$ and $b$ are both odd, then $a+b$ is even and hence this can only be true when $p>1$. By contrast, if either $a$ or $b$ is even (but not both, as they are coprime) then $a+b$ is odd and we can choose $mb+na=a+b+1$ when $p>1/2$.

 It then follows that $f_3(\omega)+f_4(\omega)\leq 0$ for all $\omega$ by an argument similar to that in the proof of Lemma~\ref{lem1a}.
\begin{figure}[htbp]
\begin{center}
\includegraphics[width = 0.9\linewidth]{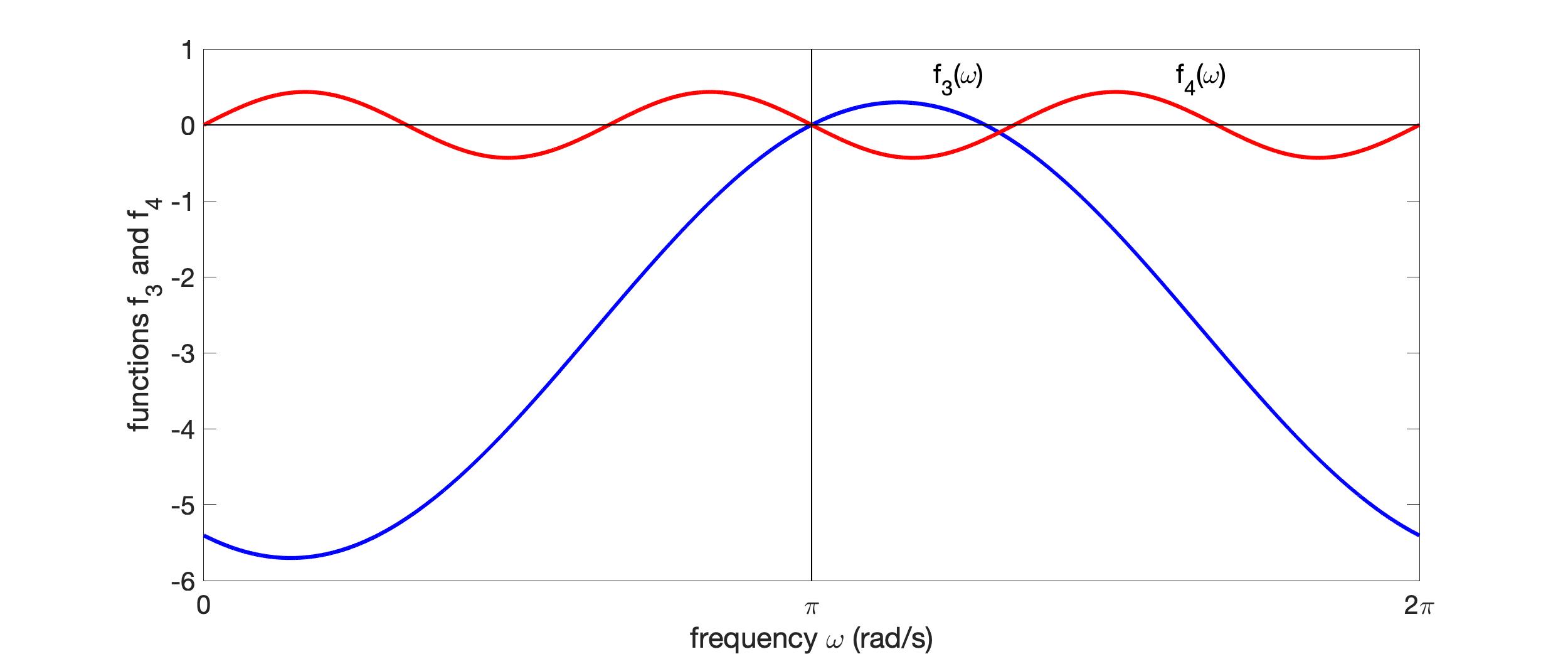}
\caption{Illustration of Lemma~\ref{lem1a} with $a=1$, $b=3$, $\theta=\pi/2$ and $\phi =\pi/7$. The functions $f_3(\cdot)$ and $f_4(\cdot)$ are never simultaneously positive. The function $f_3(\omega)$ is non-negative on the interval $\pi\leq\omega\leq\pi+2\phi/a$. The function $f_4(\omega)$ is non-negative on the interval $\pi-2\theta/b\leq \omega\leq\pi$.}
\end{center}
\end{figure}

\begin{figure}[htbp]
\begin{center}
\includegraphics[width = 0.9\linewidth]{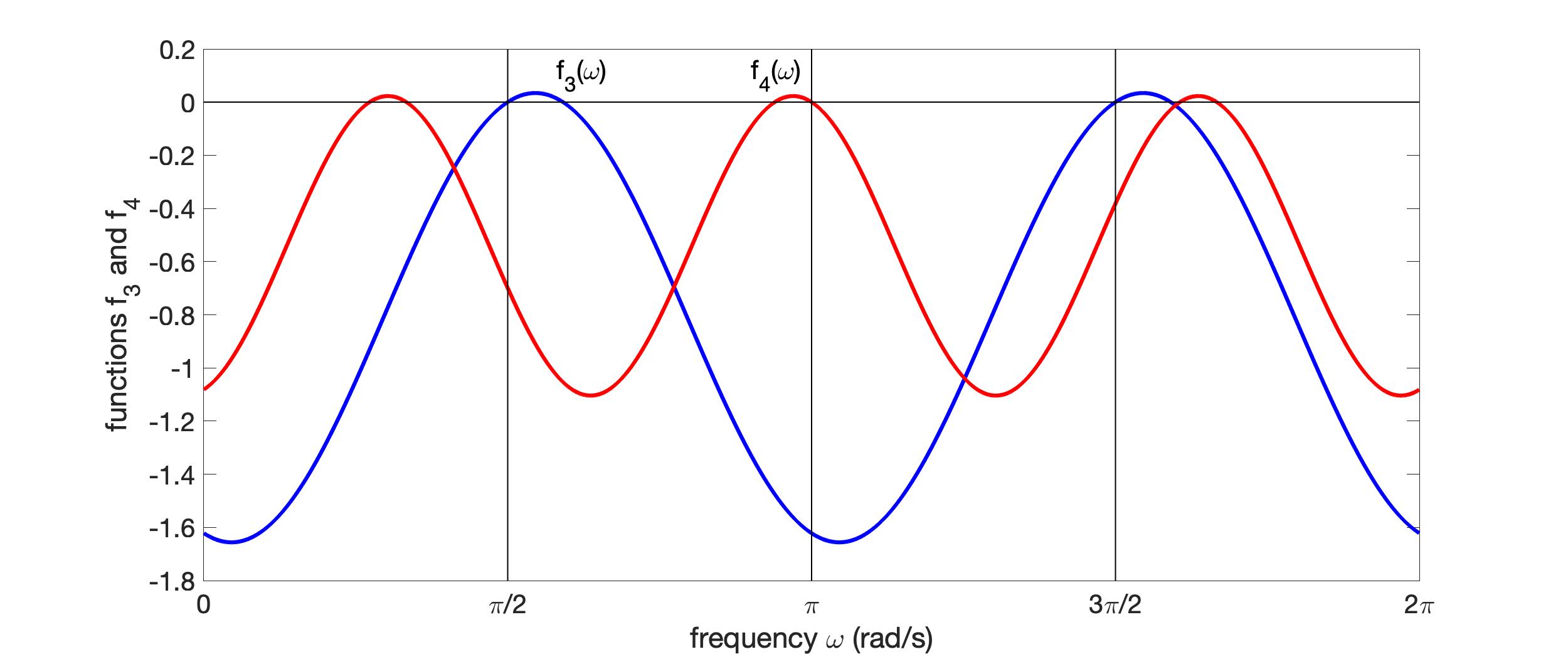}
\caption{Illustration of Lemma~\ref{lem1a} with $a=2$, $b=3$, $\theta=\pi/11$ and $\phi =\pi/11$. The functions $f_3(\cdot)$ and $f_4(\cdot)$ are never simultaneously positive. The function $f_3(\omega)$ is non-negative on the interval $\pi/2\leq\omega\leq\pi/2+2\phi/a$. The function $f_4(\omega)$ is non-negative on the interval $\pi-2\theta/b\leq \omega\leq\pi$.}
\end{center}
\end{figure}

\end{proof}

\begin{proof}[Proof of Theorem \ref{thm:2a}]
Without loss of generality suppose $a$ and $b$ are coprime, and consider the case where 
$
b\angle G(a j\omega_0 ) > a \angle G(b j\omega_0)$.
Put
\begin{align}\label{G_def}
G(a j\omega_0 ) & = g_a e^{j(\pi-\phi)}\mbox{ and }\nonumber\\
G(b j\omega_0 ) & = g_b e^{j(-\pi+\theta)}\mbox{ with }
\theta,  \phi,  g_a,g_b\in\mathbb{R}^+,
\end{align}
and 
\begin{equation}\label{p_ineq}
a\theta+b\phi < p\pi,
\end{equation}
so that (\ref{G_ineq}) holds.
Immediately we have
\begin{equation}G(a j\omega_0 ) = -g_a e^{-j\phi}\mbox{ and }G(b j\omega_0 ) = -g_b e^{j\theta}.\end{equation}
Theorem~\ref{Jthm_a} then states that if there  exist non-negative $\lambda_a, \lambda_b$, with $\lambda_a+\lambda_b>0$, such that
\begin{align}\label{N=2a}
\lambda_a \mbox{Re} & \left \{
				 M^-_{\tau}(a j\omega_0) G(a j\omega_0)
			\right \}\nonumber\\
& +
\lambda_b \mbox{Re}\left \{
				 M^-_{\tau}(b j\omega_0 ) G(b j\omega_0)
			\right \} \leq 0 \mbox{ for all }M^-_{\tau}\in\mathcal{M}^-,
\end{align}
then there is no suitable  $M\in\mathcal{M}$ for $G$.

If we set $\omega=\tau \omega_0$ we can write this $f(\omega)\leq 0$ for all $\omega$ with
\begin{align}\label{f_def1}
			f(\omega) =  -\lambda_a g_a &  (1-\cos a \omega)\cos\phi+\lambda_a g_a \sin a \omega \sin \phi\nonumber\\
			& -\lambda_b g_b (1-\cos b \omega)\cos\theta-\lambda_b g_b \sin b \omega \sin \theta.
\end{align}

Choose 
\begin{equation}\label{def_lam}
\lambda_a = g_b b \sin \theta\mbox{ and }\lambda_b = g_a a \sin \phi.
\end{equation}
Then
\begin{equation}\label{def_f}
f(\omega) = g_a g_b (f_1(\omega) + f_2(\omega))
\end{equation}
with $f_1$ and $f_2$ given by (\ref{def_f1}). Hence by Lemma~\ref{lem1a}
%
$f(\omega)\leq 0$ for all $\omega$ when $p=1$.

\end{proof}
\begin{proof}[Proof of Theorem \ref{thm:2b}]
As with Theorem~\ref{thm:2a}, suppose without loss of generality that $a$ and $b$ are coprime, and consider the case where 
$
b\angle G(a j\omega_0 ) > a \angle G(b j\omega_0)$. Let $G(a j\omega_0 )$ and $G (b j\omega_0 )$ be given by (\ref{G_def})  with (\ref{p_ineq})
so that (\ref{G_ineq}) holds.
Theorem~\ref{Jthm_b} then states that if there  exist non-negative $\lambda_a, \lambda_b$, with $\lambda_a+\lambda_b>0$, such that (\ref{N=2a}) holds and in addition
\begin{align}\label{N=2b}
& \lambda_a  \mbox{Re}  \left \{
				 M^-+{\tau}(a j\omega_0) G(a j\omega_0)
			\right \}\nonumber\\
& 
+
\lambda_b \mbox{Re}\left \{
				 M^-+{\tau}(b j\omega_0) G(b j\omega_0)
			\right \} \leq 0 \mbox{ for all }M^+_{\tau}\in\mathcal{M}^+,
\end{align}
then there is no suitable  $M\in\mathcal{M}_{\mbox{odd}}$ for $G$.

For condition (\ref{N=2a}) the analysis is the same as for Theorem~\ref{thm:2a}; hence we require $p\leq 1$. We can write condition (\ref{N=2b})  as $f(\omega)\leq 0$ for all $\omega$ with
\begin{equation}\begin{split}
			f(\omega) =  -\lambda_a g_a &  (1+\cos a \omega)\cos\phi-\lambda_a g_a \sin a \omega \sin \phi\\
			& -\lambda_b g_b (1+\cos b \omega)\cos\theta+\lambda_b g_b \sin b \omega \sin \theta.
\end{split}\end{equation}
with (\ref{p_ineq}).
As before, choose $\lambda_a$  and $\lambda_b$ according to (\ref{def_lam}). 
Then
\begin{equation}\label{def_f}
f(\omega) = g_a g_b (f_3(\omega) + f_4(\omega))
\end{equation}
with $f_3$ and $f_4$ given by (\ref{def_f3}). Hence by Lemma~\ref{lem1b}
%
$f(\omega)\leq 0$ for all $\omega$ when $p=1$ if both $a$ and $b$ are odd and when $p=1/2$ if either $a$ or $b$ are even.


\end{proof}


\subsection{Proofs of Corollaries~\ref{m_corollary_a} and ~\ref{m_corollary_b}}

\begin{proof}[Proof of Corollary~\ref{m_corollary_a}]
Without loss of generality let $a<b$.
The result follows by setting the intervals
\begin{align}
	[\alpha,\beta] =[a\omega_0 - \varepsilon, a\omega_0 + \varepsilon]\mbox{ and }[\gamma,\delta] =[b\omega_0 - \varepsilon, b\omega_0 + \varepsilon]
\end{align}
with $\varepsilon>0$ and taking the limit as $\varepsilon \rightarrow 0$. Specifically we find
\begin{equation}
\begin{split}
\psi(t) 
& = \frac{2\lambda}{t}\sin(a\omega_0t)\sin(\varepsilon t)
-\frac{2\mu}{t}\sin(b\omega_0t)\sin(\varepsilon t)
\\
\phi(t) & = 2\varepsilon\lambda+2\varepsilon\kappa\mu+\phi_1(t)\\
\phi_1(t) 
& = -\frac{2\lambda}{t}\cos(a\omega_0t)\sin(\varepsilon t)
-\frac{2\kappa\mu}{t}\cos(b\omega_0t)\sin(\varepsilon t),
\end{split}
\end{equation}
with $a\lambda=b\mu$.
Hence
\begin{equation}
 \overline{\rho}^c  = \lim_{\varepsilon\rightarrow 0}\rho^c\\
\end{equation}
\end{proof}

\begin{proof}[Proof of Corollary~\ref{m_corollary_b}]
In addition
\begin{equation}
\tilde{\phi}(t)  = 2\varepsilon\lambda+2\varepsilon\kappa\mu-|\phi_1(t)|
\end{equation}
and hence
\begin{equation}
\overline{\rho}^c_{\mbox{odd}}  = \lim_{\varepsilon\rightarrow 0}\rho^c_{\mbox{odd}}
\end{equation}
\end{proof}


\subsection{Proof of Theorems~\ref{Meg_equiv_a} and~\ref{Meg_equiv_b} }

\begin{proof}[Proof of Theorem \ref{Meg_equiv_a}]
Consider $q_-(t)$ on $t>0$. Since $q_-(t)$ is periodic it suffices to consider the interval $0< t \leq 2\pi$. Define 
\begin{equation}
r_-(t) = b \arctan q_-(t) + a \arctan \kappa q_-(t).
\end{equation}
We will show that  for each $\kappa$ all turning points of $r_-(t)$ are bounded by $\pm (a+b-2)\frac{\pi}{2}$ and that at least one turning point touches the bounds. 
This is sufficient to establish the equivalence between Corollary~\ref{m_corollary_a} and  Corollary~\ref{cor:2a}, which is in turn equivalent to Theorem~\ref{thm:2a}.


The turning points of $r_-(t)$ occur at the same values of $t$ as the turning points of $q_-(t)$. Specifically 
\begin{equation}
\frac{d}{dt} r_-(t) = \left (\frac{b}{1+q_-(t)^2} + \frac{a\kappa}{1+\kappa^2q_-(t)^2}\right ) \frac{d}{dt}q_-(t).
\end{equation}
When $[t]_\pi\neq 0$ the derivative of $q_-(t)$ is given by
\begin{equation}
\frac{d}{dt}q_-(t)  =ab \frac{m_-(t)n_-(t)}{d_-(t)^2}
\end{equation}
with
\begin{equation}
\begin{split}
	m_-(t) & = 
	\sin \frac{at}{2} \cos \frac{bt}{2} + \kappa \sin\frac{bt}{2}\cos\frac{at}{2}\\
	n_-(t) & = 
b\sin \frac{at}{2} \cos \frac{bt}{2} -a \sin\frac{bt}{2}\cos\frac{at}{2}\\
	d_-(t) & = b\sin^2 \frac{at}{2} + \kappa a \sin^2 \frac{bt}{2}
\end{split}
\end{equation}
On the interval $0<t\leq 2\pi$ with $[t]_\pi\neq 0$ the derivatives of both $q_-(t)$ and $r_-(t)$ are zero when either $m_-(t)=0$ or $n_-(t)=0$.
We consider the two cases separately. In both cases we use the identity
\begin{equation}
	q_-(t) =
		\frac{
				b\tan\frac{at}{2}\left (1+\tan^2\frac{bt}{2}\right )
				-
				a\tan\frac{bt}{2}\left (1+\tan^2\frac{at}{2}\right )
		}
		{
			b \tan^2 \frac{at}{2}\left (1+\tan^2\frac{bt}{2}\right )
			+\kappa a  \tan^2\frac{bt}{2}\left (1+\tan^2\frac{at}{2}\right )
		}
\end{equation}

\begin{description}
\item[Case 1] Suppose $t_1$ satisfies $m_-(t_1)=0$. 
At these values
\begin{equation}q_-(t_1) = \cot \frac{at_1}{2}\end{equation}
and
\begin{equation}\kappa q_-(t_1) =- \cot \frac{bt_1}{2}\end{equation}
Hence if we define
\begin{align}\label{rstar}
r_-^*(t) = b\left [ \frac{\pi}{2}- \frac{at}{2}\right ]_{[-\pi/2,\pi/2]}+a\left [-\frac{\pi}{2}+ \frac{bt}{2}\right ]_{[-\pi/2,\pi/2]}
\end{align}
for $t \in [0,2\pi]$ we find $r_-(t_1) = r_-^*(t_1)$
for all $t_1$ satisfying $m_-(t_1)=0$
The function $r_-^*(\cdot)$ is piecewise constant, taking values $(-a-b+2\lambda)\pi/2$ with $\lambda = 1,\ldots,a+b-1$. On each piecewise constant interval there is a $t_1$ satisfying $m_-(t_1)=0$. Hence these turning points of $r_-(t)$ lie within the bounds $\pm(a+b-2)\frac{\pi}{2}$ with at least one on the bound.

\begin{figure}[htbp]
\begin{center}
\includegraphics[width = 0.9\linewidth]{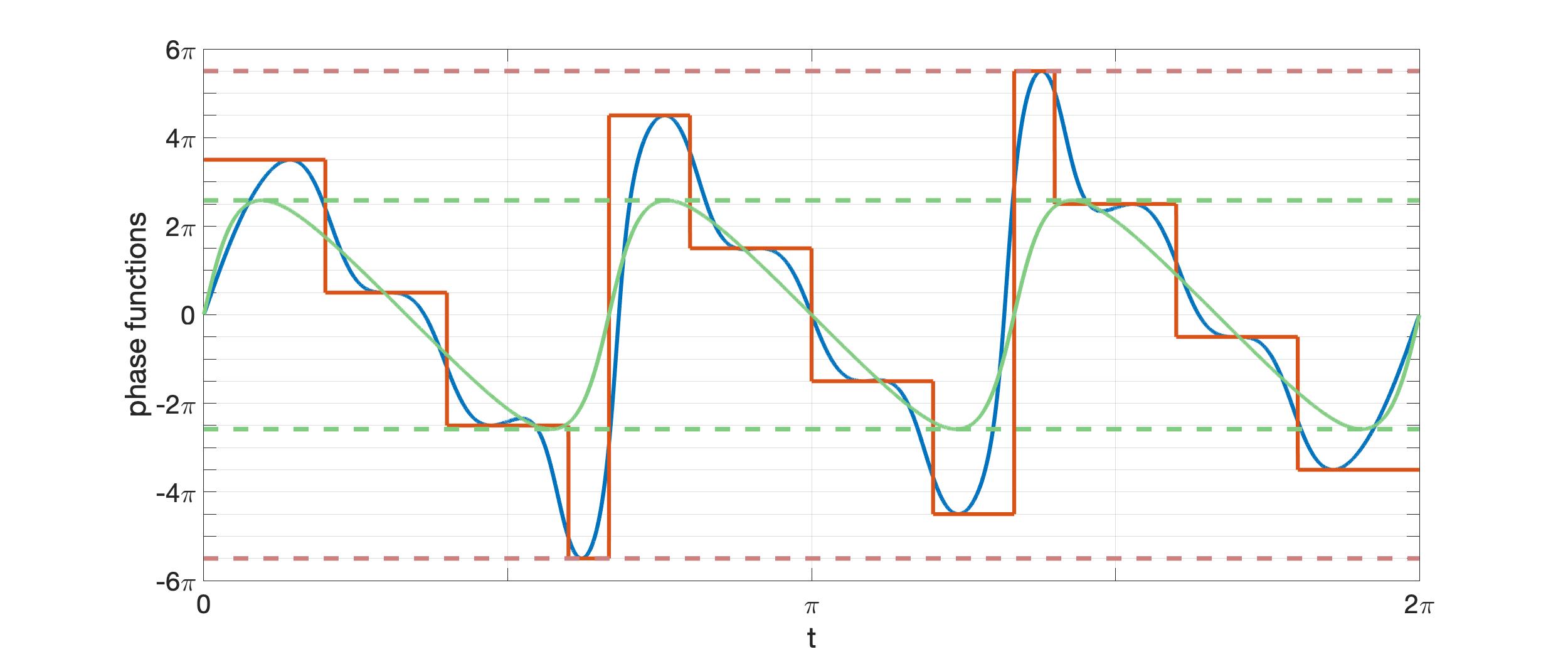}
\caption{Phase functions $r_-$ (blue), $r_-^*$ (red) and $r_-^\dagger$ (green) with $a=3$ and $b=10$. The turning points of $r_-$ where $m_-(t)=0$ take the value $(a+b-2\lambda)\pi/2$ with $\lambda$ an integer. The function $r_-^*(\cdot)$ is piecewise constant and takes these same values.  The turning points of $r_-$ where $n_-(t)=0$ take the values of $r_-^\dagger $, whose bounds are also shown.}\label{test23a}
\end{center}
\end{figure}
\item[Case 2] Define
\begin{equation}q^\dagger_-(t) = 
		\frac{
				(b^2-a^2)\sin at_2
		}
		{
			a^2+b^2+\kappa a b -(b^2-a^2)\cos at_2
		}
\end{equation}
and
\begin{equation}r^\dagger_-(t) = b \arctan q^\dagger_-(t)  + a \arctan \kappa q^\dagger_-(t).\end{equation}
Then $q_-(t_2)=q^\dagger_-(t_2)$ and  $r_-(t_2)= r_-^\dagger(t_2)$ for all $t_2$ satisfying  $n_-(t_2)=0$. It follows that $|r_-(t_2)|\leq |\bar{r}^\dagger|$ for all such $t_2$ where
\begin{equation}\label{def_rbar}
\begin{split}
\bar{r}^\dagger & =  b \arctan \bar{q}^\dagger + a \arctan \kappa \bar{q}^\dagger\\
\bar{q}^\dagger & =  
		   \frac{b^2-a^2}{2\sqrt{ab(a+\kappa b)(b+\kappa a)}}
\end{split}
\end{equation}

With some abuse of notation, write $\bar{r}^\dagger=\bar{r}^\dagger(\kappa)$; i.e. consider $\bar{r}^\dagger$ as a function of $\kappa$. We find
\begin{equation}\begin{split}
\frac{d}{d\kappa}\bar{r}^\dagger(\kappa) = &
\frac{-(a+b\kappa )(a^2-b^2)^2
	}
	{(2ab+ (a^2+b^2)\kappa)(2ab\kappa +a^2+b^2)
	} \\
& \times
\sqrt{
\frac{ab}{(a+b\kappa)(a\kappa+b)}
}\end{split}\end{equation}
Hence $|\bar{r}^\dagger(\kappa)| \leq \max(|\bar{r}^\dagger(0)|,\lim_{\kappa\rightarrow \infty}|\bar{r}^\dagger(\kappa)|)$. 
Furthermore
\begin{equation}\begin{split}
\bar{r}^\dagger(0) & =  b\arctan \left (\frac{b^2-a^2}{2ab}\right )\\
\lim_{\kappa\rightarrow \infty} \bar{r}^\dagger(\kappa) & = a\arctan \left (\frac{b^2-a^2}{2ab}\right )
\end{split}\end{equation}
Hence it suffices to show 
\begin{equation}\max(a,b) \arctan \left |\frac{b^2-a^2}{2ab}\right | \leq (a+b-2)\frac{\pi}{2}\end{equation}
If both $a$ and $b$ are both greater than 1 then this is immediate, since in this case $\max(a,b)\leq a+b-2$. Hence it suffices to show
\begin{equation} b \arctan \frac{b^2-1}{2b} \leq  (b-1)\frac{\pi}{2}\end{equation}
or equivalently, with $b\geq 2$, that
\begin{equation}  \frac{b^2-1}{2b}\sin\left ( \frac{\pi}{2b}\right ) \leq \cos\left ( \frac{\pi}{2b}\right )\end{equation}
We can quickly check
\begin{equation}\frac{b^2-1}{2b}\sin\left ( \frac{\pi}{2b}\right ) \leq \frac{(b^2-1)\pi}{4b^2}
\leq 
1 - \frac{\pi^2}{8b^2}
\leq
 \cos\left ( \frac{\pi}{2b}\right )
\end{equation}
\end{description}

\end{proof}


\begin{proof}[Proof of Theorem \ref{Meg_equiv_b}]
The proof is similar to that for Theorem~\ref{Meg_equiv_a}.
We have already established appropriate bounds for $r_-(t)$.
If we define 
\begin{equation}r_+(t) = b \arctan q_+(t) + a \arctan \kappa q_+(t)\end{equation}
then we need to show it is also bounded appropriately. 
Similar to the previous case, the turning points of $r_+(t)$ occur at the same values of $t$ as the turning points of $q_+(t)$. 
When $[t]_\pi\neq 0$ the derivative of $q_+(t)$ is given by
\begin{equation}\frac{d}{dt}q_+(t)  =ab \frac{m_+(t)n_+(t)}{d_+(t)^2}\end{equation}
with
\begin{equation}\begin{split}
	m_+(t) & = 
	\kappa \sin \frac{at}{2} \cos \frac{bt}{2} +  \sin\frac{bt}{2}\cos\frac{at}{2}\\
	n_+(t) & = 
b \sin\frac{bt}{2}\cos\frac{at}{2}
- a\sin \frac{at}{2} \cos \frac{bt}{2} \\
	d_+(t) & = b\cos^2 \frac{at}{2} + \kappa a \cos^2 \frac{bt}{2}
\end{split}\end{equation}
We will consider the cases $m_+(t)=0$ and $n_+(t)=0$ separately. This time we use the identity 
\begin{equation}\begin{split}
	q_+(t) & = 
		\frac{
				b\tan\frac{at}{2}\left (1+\tan^2\frac{bt}{2}\right )
				-
				a\tan\frac{bt}{2}\left (1+\tan^2\frac{at}{2}\right )
		}
		{
			b \left (1+\tan^2\frac{bt}{2}\right )
			+\kappa a  \left (1+\tan^2\frac{at}{2}\right )
		}
\end{split}\end{equation}
\begin{description}
\item[Case 1] Suppose $t_1$ satisfies $m_+(t_1)=0$. Then
\begin{equation}\begin{split}
	q_+(t_1) 
		& = \tan \frac{at_1}{2}
\end{split}\end{equation}
and
\begin{equation}\begin{split}
	\kappa q_+(t_1) 
		& = - \tan \frac{bt_1}{2}
\end{split}\end{equation}
Hence if we define
\begin{align}\label{rstar}
r_+^*(t) = b\left [  \frac{at}{2}\right ]_{[-\pi/2,\pi/2]}-a\left [ \frac{bt}{2}\right ]_{[-\pi/2,\pi/2]}
\end{align}
for $t \in [0,2\pi]$ we find $r_+(t_1) = r_+^*(t_1)$
for all $t_1$ satisfying $m_+(t_1)=0$.
The function $r_+^*(\cdot)$ is piecewise constant, taking values $(-a-b-1+2\lambda)\pi/2$ with $\lambda = 1,\ldots,a+b$ when either $a$ or $b$ are even, and values $(-a-b+2\lambda)\pi/2$ with $\lambda = 1,\ldots,a+b-1$ when $a$ and $b$ are both odd. On each piecewise constant interval there is a $t_1$ satisfying $m_+(t_1)=0$. Hence these turning points of $r_+(t)$ lie within the bounds $\pm(a+b-1)\frac{\pi}{2}$ (if either $a$ or $b$ even) or $\pm(a+b-2)\frac{\pi}{2}$ (if $a$ and $b$ both odd) with at least one on the bound.

\begin{figure}[htbp]
\begin{center}
\includegraphics[width = 0.9\linewidth]{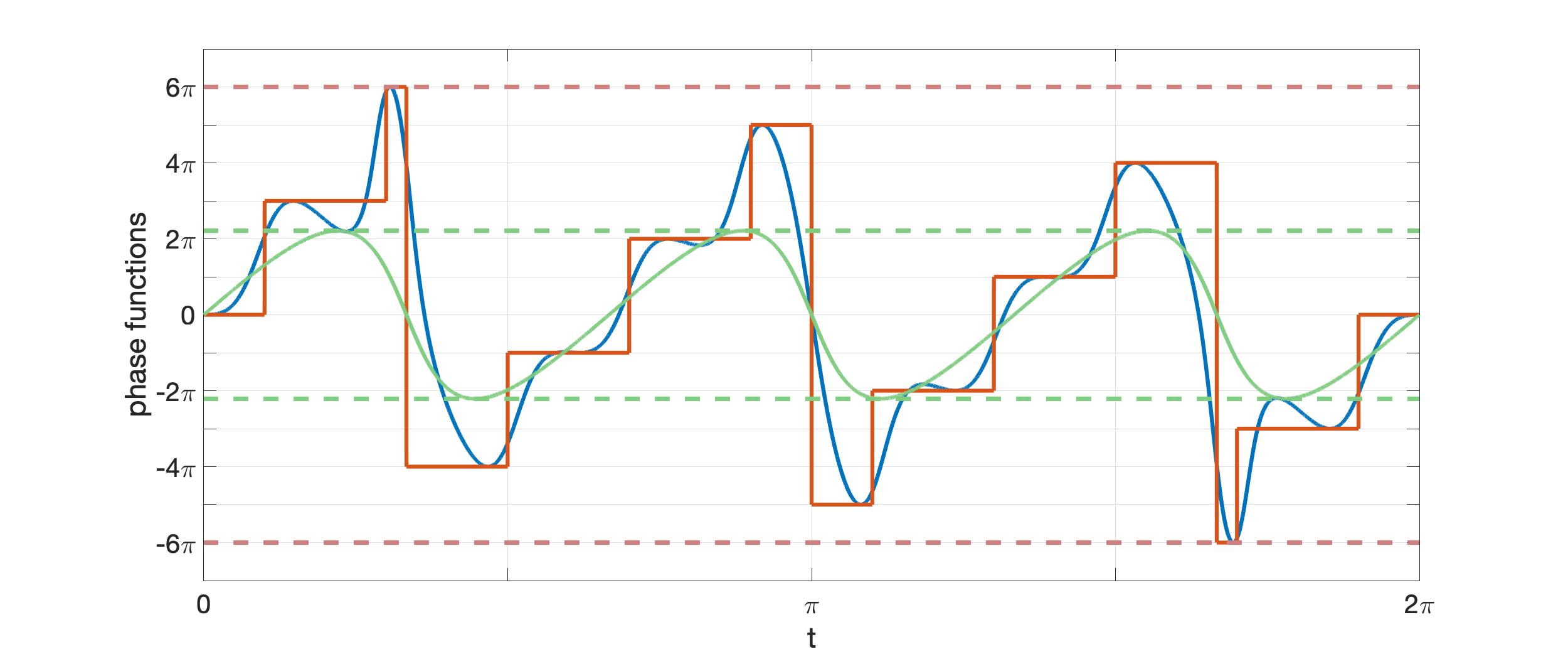}
\caption{
Phase functions $r_+$ (blue), $r_+^*$ (red) and $r_+^\dagger$ (green) with $a=3$ and $b=10$. The turning points of $r_+$ where $m_+(t)=0$ take the value $(a+b+1-2\lambda)\pi/2$ with $\lambda$ an integer. The function $r_+^*(\cdot)$ is piecewise constant and takes these same values.  The turning points of $r_+$ where $n_+(t)=0$ take the values of $r_+^\dagger $, whose bounds are also shown.
}\label{test25}
\end{center}
\end{figure}

\item[Case 2] 
Define
\begin{equation}q^\dagger_+(t) = 
		\frac{
				(b^2-a^2)\sin at_2
		}
		{
			a^2+b^2+\kappa a b +(b^2-a^2)\cos at_2
		}
\end{equation}
and
\begin{equation}r^\dagger_+(t) = b \arctan q^\dagger_+(t)  + a \arctan \kappa q^\dagger_+(t).\end{equation}
Then $q_+(t_2)=q^\dagger_+(t_2)$ and  $r_+(t_2)= r_+^\dagger(t_2)$ for all $t_2$ satisfying  $n_+(t_2)=0$. It follows that $|r_+(t_2)|\leq |\bar{r}^\dagger|$ for all such $t_2$ where $\bar{r}^\dagger$ is given by (\ref{def_rbar}).
%
%
As we have the same bounds as before, the previous analysis establishes that these turning points lie within the bounds.
\end{description}

\end{proof}

\bibliographystyle{IEEEtran}

\bibliography{harmonics_bib_trans}

\begin{IEEEbiography}[{\includegraphics[width=1in,height=1.25in,clip,keepaspectratio]{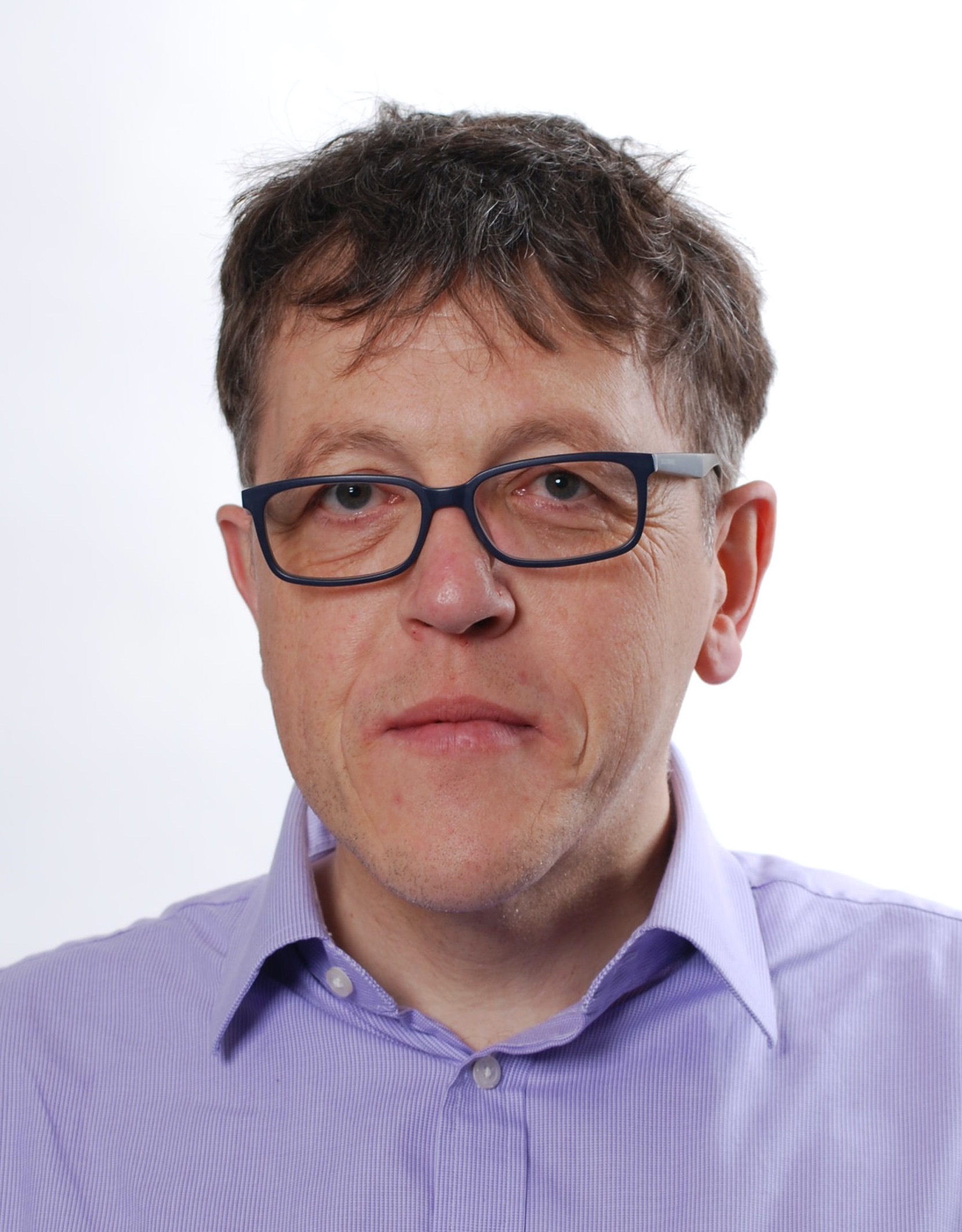}}]{William P. Heath}
 received an M.A. in mathematics from the University of Cambridge, U.K. and both an M.Sc. and Ph.D. in systems and control from the University of Manchester Institute of Science and Technology, U.K.  He is Chair of Feedback and Control with the Control Systems Centre and Head of the Department of Electrical and Electronic Engineering, University of Manchester, U.K.  Prior to joining the University of Manchester, he worked at Lucas Automotive and was a Research Academic at the University of Newcastle, Australia. His research interests include absolute stability, multiplier theory, constrained control, and system identification. 
\end{IEEEbiography}

\begin{IEEEbiography}[{\includegraphics[width=1in,height=1.25in,clip,keepaspectratio]{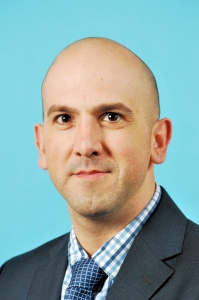}}]{Joaquin Carrasco}
                is a Reader at the Control Systems Centre, Department of Electrical and Electronic Engineering, University of Manchester, UK. He was born in Abarán, Spain, in 1978. He received the B.Sc. degree in physics and the Ph.D. degree in control engineering from the University of Murcia, Murcia, Spain, in 2004 and 2009, respectively. From 2009 to 2010, he was with the Institute of Measurement and Automatic Control, Leibniz Universität Hannover, Hannover, Germany. From 2010 to 2011, he was a research associate at the Control Systems Centre, School of Electrical and Electronic Engineering, University of Manchester, UK. His current research interests include absolute stability, multiplier theory, and robotics applications.
\end{IEEEbiography}

\begin{IEEEbiography}[{\includegraphics[width=1in,height=1.25in,clip,keepaspectratio]{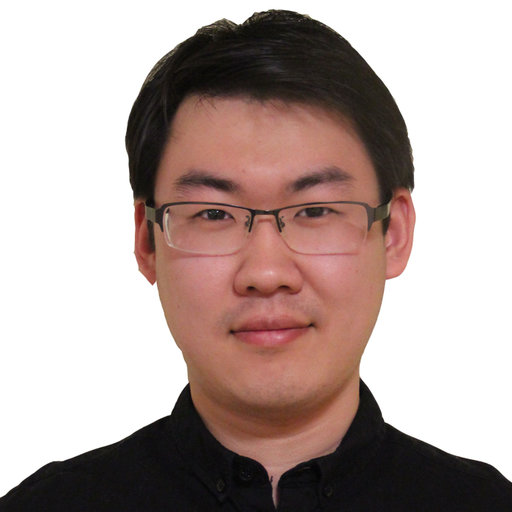}}]{Jingfan Zhang}
 received the B.Eng. degree in electrical engineering and its automation from Xi’an Jiaotong-Liverpool University, Suzhou, China, in 2015, and the M.Sc. degree in advanced control and systems engineering from the University of Manchester, Manchester, U.K., in 2016. He is currently working toward the Ph.D. degree with the Department of Electrical and Electronic Engineering, University of Manchester, Manchester, U.K. His research interests include absolute stability and applications of control theory in robotics.
\end{IEEEbiography}

\end{document}